\documentclass[12pt]{article}

\usepackage[utf8]{inputenc}
\usepackage[T1]{fontenc}


\usepackage[french,english]{babel}
\usepackage{enumerate} 
\usepackage{cancel}

\usepackage[svgnames]{xcolor}
\usepackage[pdftex,leftbars,xcolor]{changebar}

\usepackage{tikz}
\usetikzlibrary{matrix,arrows}



\usepackage{amsmath,amsthm}
\usepackage{amssymb,latexsym,amsfonts}

\usepackage{textcomp}

\allowdisplaybreaks[2]

\usepackage{aliascnt}
\numberwithin{equation}{section}

\usepackage{mathtools}
\usepackage{relsize,exscale}
\usepackage{rotating}
\usepackage{etoolbox}
\usepackage{etex}

\pdfoutput=1
\usepackage[pdftex,%
bookmarks=true,%
bookmarksnumbered=true,%
plainpages=false,%
breaklinks=true,%
colorlinks=false,%
urlcolor=blue,%
citecolor=green,%
linkcolor=red,%
]{hyperref}
\pdfcompresslevel=9

\usepackage{doi}

\usepackage[hyperpageref]{backref}
\backreffrench
\renewcommand*{\backref}[1]{}  
\renewcommand*{\backrefalt}[4]{
  \ifcase #1 %
  \relax
  \or
(Cited page~#2.)%
  \else
(Cited pages~#2.)%
  \fi}

\newtheorem{theorem}{Theorem}[section]

\newaliascnt{cor}{theorem}
\newtheorem{cor}[cor]{Corollary}
\aliascntresetthe{cor}

\newaliascnt{prop}{theorem}
\newtheorem{prop}[prop]{Proposition}
\aliascntresetthe{prop}

\newaliascnt{lemma}{theorem}
\newtheorem{lemma}[lemma]{Lemma}
\aliascntresetthe{lemma}

\theoremstyle{definition}
\newaliascnt{defi}{theorem}
\newtheorem{defi}[defi]{Definition}
\aliascntresetthe{defi}

\newaliascnt{example}{theorem}

\aliascntresetthe{example}

\theoremstyle{remark}
\newaliascnt{remark}{theorem}
\newtheorem{remark}[remark]{Remark}
\aliascntresetthe{remark}

\addto\extrasenglish{%
}


\newcommand{\ie}{\textit{i.e.} \/}

\newcommand{\numberset}[1]{\mathbb{#1}}

\newcommand{\nat}{\numberset{N}}
\newcommand{\intg}{\numberset{Z}}

\newcommand{\korps}{\numberset{K}}


\DeclareMathOperator{\Kr}{Ker}
\DeclareMathOperator{\Der}{Der}

\DeclareMathOperator{\Hom}{Hom}
\DeclareMathOperator{\pr}{\mathsf{pr}}

\newcommand{\Sym}[1][\!]{\ensuremath{\mathcal{S}^{#1}}}
\newcommand{\Tens}[1][\!]{\ensuremath{\mathcal{T}^{#1}}}

\DeclareMathOperator{\UEA}{\mathcal{U}\!}

\DeclareMathOperator{\un}{\mathbf{1}}

\newcommand{\Hochcochains}[1][]{\ensuremath{C_{H}^{#1}}}
\newcommand{\Hoch}[1][]{\ensuremath{H_{H}^{#1}}}
\newcommand{\ChEcochains}[1][]{\ensuremath{C_{CE}^{#1}}}
\newcommand{\ChE}[1][]{\ensuremath{H_{CE}^{#1}}}

\hyphenation{co-ho-mo-lo-gy}

%
%
%
%


\begin{document}

\begin{center}
	\LARGE {$L_\infty$-Formality check for \\ the
		Hochschild Complex of certain \\ Universal Enveloping Algebras}
	
	\vspace*{1em}

	\Large
	{
		
		Martin Bordemann\up{\,1},%
		{\renewcommand{\thefootnote}{\fnsymbol{footnote}}
			Olivier Elchinger\up{\,2}\footnote{This author has been fully supported in the frame of the AFR scheme of the Fonds National de la Recherche (FNR), Luxembourg with the project QUHACO 8969106} \\
		}
		\setcounter{footnote}{0}
		Simone Gutt\up{\,3}, and Abdenacer Makhlouf\up{\,1}
	}
	
	\vspace*{2em}
	
	\normalsize
	{
		\up{1} %
		Laboratoire de Math\'{e}matiques, Informatique  et Applications, \\
		Universit\'{e} de Haute-Alsace, Mulhouse,\\
		\texttt{Martin.Bordemann@uha.fr,~Abdenacer.Makhlouf@uha.fr}
		
		\up{2}
		Mathematics Research Unit,
		University of Luxembourg,\\
		\texttt{Olivier.Elchinger@uha.fr}
		
		\up{3}
		D\'{e}partement de Math\'{e}matiques,
		Universit\'{e} Libre de Bruxelles\\
		\texttt{sgutt@ulb.ac.be}
	}
	
	\vspace*{1.5em}
	
	\today
\end{center}

\begin{abstract}
We study the $L_\infty$-formality problem for the Hochschild complex of the universal enveloping algebra of some examples of Lie algebras such as Cartan-$3$-regular quadratic Lie algebras (for example semisimple Lie algebras and in more detail $\mathfrak{so}(3)$), and free Lie algebras. We show that for these examples formality in Kontsevich's sense does NOT hold, but we compute the $L_\infty$ structure on the cohomology given by homotopy transfer in certain cases.
\end{abstract}

\section*{Introduction}

Since Maxim Kontsevich's seminal paper \cite{K03} on deformation quantization on any Poisson manifold, his concept of $L_\infty$-formality of the Hochschild complex of an associative algebra with values in the algebra turned out to be extremely useful for the deformation theory of that algebra. More precisely  the Hochschild complex seen as a differential graded Lie algebra (by means of the Hochschild
differential and the Gerstenhaber bracket) is called
formal if it is quasi-isomorphic in the $L_\infty$-sense to its Hochschild cohomology. If this is the case, first order deformations (seen as $2$-cocycles) having induced Gerstenhaber bracket equal to zero (so-called Maurer-Cartan elements) always integrate to formal deformations.

Kontsevich's basic example is the symmetric algebra of a finite dimensional vector space (over a field $\korps$ of characteristic $0$) whose Hochschild complex he showed is formal. An interesting playground for formality checks seems to be the class of \emph{universal enveloping algebras of Lie algebras} which are very close to symmetric algebras. Two of us
(M.B. and A.M.) have already looked at the Lie algebra of 
all infinitesimal affine transformations of $\korps^n$
where we found formality, see \cite{BM08}. One of us
(O.E.) has studied the three-dimensional Heisenberg algebra
and found that the corresponding Hochschild complex was NOT
formal, see \cite{El12} and \cite{E14}.\\
 The aim of the present work is to check formality of the Hochschild complex of
the universal enveloping algebras of two classes of Lie algebras: on one hand some finite-dimensional quadratic Lie algebras which we call Cartan-$3$-regular (e.g.~semisimple Lie algebras
and in more detail $\mathfrak{so}(3)$), and on the other hand free Lie algebras over any vector space. The second aim is
--if possible-- 
to explicitly compute
the higher brackets of order $\geqslant 3$ on the cohomology
which then will ensure an
$L_\infty$-quasi-isomorphism with the Hochschild complex
by homotopy transfer.
\\
Our first main result is that the Hochschild complex of the universal enveloping
algebra of a \emph{nonabelian reductive Lie algebra} is
NOT formal. In fact, we show a more general result for \emph{Cartan-$3$-regular quadratic Lie algebras} which are quadratic Lie algebras whose Cartan $3$-cocycle defines a nontrivial cohomology class. 
Moreover, in the case of $\mathfrak{so}(3)$, one just has to add one higher
bracket $d_3$ of order $3$ to restore the 
$L_\infty$-quasi-isomorphism with the Hochschild complex which we can describe explicitly.\\ 
The second main result consists in showing that the Hochschild complex of the universal enveloping
algebra of any \emph{free Lie algebra}
generated by a vector space of dimension $\geqslant 2$ is
NOT formal by explicit computations. Again by adding one higher order bracket $d_3$ of order $3$ we can restore 
the $L_\infty$-quasi-isomorphism with the Hochschild complex.\\
On the other hand note that the
universal enveloping algebras of semisimple and free Lie algebras
are well-known to be rigid, hence every
first order deformation integrates to a deformation which is equivalent
to the trivial deformation: it follows that these associative algebras
provide examples where the deformation problem can always be solved, but where formality does not hold.

The main tool for formality checks is a \emph{characteristic
$3$-class $c_3$} in the graded Chevalley-Eilenberg cohomology
of the graded Lie algebra given by the Hochschild cohomology
equipped with a graded Lie bracket induced by the Gerstenhaber Lie bracket: this is well-known in the litterature in order-by-order computations, and provides
the first obstruction to $L_\infty$-formality.\\
In order to deal with finite-dimensional Lie algebras we use a
result already sketched in \cite[Secs.~8.3.1,8.3.2]{K03} and further explicited
in \cite{BM08} (based on the work \cite{BMP05}) that the Hochschild complex of the universal enveloping algebra of a finite-dimensional Lie algebra $\mathfrak{g}$ as a 
differential graded
Lie algebra is quasi-isomorphic (in the $L_\infty$-sense)
to the much `easier' Chevalley-Eilenberg
complex of $\mathfrak{g}$ with values in
the symmetric algebra $\Sym \mathfrak{g}$ of the adjoint module
$\mathfrak{g}$ equipped with the Chevalley-Eilenberg differential and the Schouten bracket for
poly-vector fields (on the manifold $\mathfrak{g}^*$).
The proof we know of involves Kontsevich's formality map.
For quadratic Lie algebras, we compute the Schouten brackets of polynomials
in the quadratic Casimir (degree $0$) with the Cartan $3$-cocycle (degree $3$) where the Lie bracket (degree $2$)
and the Euler field (degree $1$)
appear, and evaluate a representing graded $3$-cocycle for
the $3$-class $c_3$ on combinations of these classes.
\\
For the free Lie algebra generated by a vector space
$V$ we use the classical result that
its universal enveloping algebra is simply isomorphic to the free
algebra $\Tens V$ generated by $V$ whose Hochschild cohomology is also well-known to be concentrated in degree $0$ and $1$.
Here one of the main tools is to construct an explicit complement to the space of all inner derivations inside the
space of all derivations in case $V$ is finite-dimensional.\\
Another general tool which we shall use several times is the 
homotopy perturbation lemma (in its
$L_\infty$-form) to compute the higher order
brackets if necessary.

The paper is organized as follows: \autoref{Sec:Kontsevich-formality} recalls graded coalgebraic structures, definitions of $L_\infty$ algebras and morphisms, and of formality, and the characteristic $3$-class.
\autoref{Sec:Perturbation-lemma} presents the \emph{Perturbation Lemma} for chain complexes and its well-known extension to $L_\infty$-algebras. We add a seemingly less known observation, see \autoref{Thm:PertBordemann}
(for which the proof will be in \cite{BE18}), that
the ordinary geometric series formulas from the `unstructured'
perturbation lemma automatically preserve the underlying
graded coalgebra structures. In particular, this result allows to transfer the $L_\infty$-structure of the Hochschild cochain complex to its cohomology and allows to construct quasi-inverses.
In \autoref{Sec:Finite-dim-Lie-alg} and \autoref{Sec:Free-Lie-alg}, we show that the characteristic
$3$-class cannot be zero for certain examples by computing
a representing $3$-cocyle on well-chosen elements.
Moreover, by means of
the perturbation lemma we show that for the Lie algebra $\mathfrak{so}(3)$, 
 and for free Lie algebras, the $L_\infty$ structure on the cohomology does not come from the Gerstenhaber bracket only, but involves a computable map of arity $3$.

\subsubsection*{Acknowledgements} The authors would like to
thank B.~Hurle,~B.~Valette,~F.~Wagemann, and P.~Xu for fruitful discussions, and T.~Petit for making us aware of
Sections 8.3.1 and 8.3.2 in Kontsevich's article \cite{K03}.

\section{Kontsevich formality} \label{Sec:Kontsevich-formality}

\subsection{Generalities}\label{SubSec:Generalities}

The material of the following Section is mostly contained
in \cite[p.40-50]{FHT01}, \cite{K03}, \cite{AMM02}, the Appendix of \cite{BGHHW05}, \cite[§4.1]{BM08}, 
\cite{LV12}, \cite{El12}, \cite{Bor15}).

Let $\korps$ be a field of characteristic $0$. We will use the framework of graded bialgebras. Unless explicitly specified, vector spaces $V,W,\ldots$ and algebras will be graded over $\intg$,
and the degree in $\mathbb{Z}$ of a homogeneous element $x$
will be denoted by $|x|$. $\Hom (V,W)$ will always denote
the subspace of the space of all linear maps $V\to W$ generated by all homogeneous linear maps $V\to W$. Tensor products
of graded vector spaces $V$ and $W$ are graded as usual.

We recall the notations used: there is the
graded transposition $\tau:V\otimes W\to W\otimes V$
defined by $\tau(x\otimes y)=(-1)^{|x||y|}y\otimes x$
on homogeneous elements $x\in V$ and $y\in W$, and the
\begin{align*}
\text{Koszul rule of signs} & & (\phi \otimes \psi)(x \otimes y) \coloneqq (-1)^{|\psi| |x|} \phi(x) \otimes \psi(y),
\intertext{for homogeneous linear maps $\phi$ and $\psi$ between graded spaces; and in the tensor product of two
	graded algebras $\mathcal{A}$ and $\mathcal{B}$ the}
\text{graded multiplication} & & (a \otimes b)(a' \otimes b') \coloneqq (-1)^{|b||a'|} aa' \otimes bb',
\end{align*}
for elements $a,a'\in\mathcal{A}$ and $b,b'\in\mathcal{B}$.

\emph{Shifted graded vector spaces} are noted $V[j]$ for any integer $j$, with $V[j]^i \coloneqq V^{i+j}$ for all integers $i$. The \emph{suspension map} $s : V \to V[-1]$, defined by the identity of the underlying vector spaces, is of degree one. Multilinear maps $\phi : V^{\otimes k}\to W^{\otimes l}$ can be shifted, \ie
$\phi[j] : V[j]^{\otimes k} \to W[j]^{\otimes l}$ by setting $\phi[j] \coloneqq (s^{\otimes l})^{-j} \circ \phi \circ 
(s^{\otimes k})^{j}$. Note that $(s^{\otimes k})^{j}
=(-1)^{\frac{j(j-1)}{2}\frac{k(k-1)}{2}}(s^j)^{\otimes k}$.
The degree of the shifted map $\phi[j]$ is given by $|\phi[j]| = j(k-l) + |\phi|$. For $k=l$, the maps $\phi$ and
its shift $\phi[j]$ have the same degree and the same action on
the underlying ungraded vector spaces. We clearly have
the following rules
\begin{equation}
  \phi[j][j']=\phi[j+j'],~~
  (\psi\circ\phi)[j]=(\psi[j])\circ (\phi[j]),~~\mathrm{and}
  \label{EqDefShiftJPlusJPrimeAndCirc}
\end{equation}
\begin{equation}
  (\phi\otimes \phi')[j]=
  (-1)^{\frac{j(j-1)}{2}kk'+\frac{j(j+1)}{2}ll'
  	     +j(kl'+k|\phi'|+|\phi|l')}
  	   (\phi[j])\otimes (\phi'[j]).
  	 \label{EqDefShiftTensorProduct}  
\end{equation}
where $j,j'$ are integers and $\phi'$ is a homogeneous $\korps$-linear map from $V^{\prime \otimes k'}$
to $W^{\prime \otimes l'}$, $V'$ and $W'$ being graded
vector spaces.

Recall the \emph{graded symmetric bialgebra of $V$} 
$(\Sym V,\mu_{sh}=\bullet,\Delta_{sh},\un,\varepsilon)$, with $\mu_{sh}$ the graded commutative multiplication and $\Delta_{sh}$ the graded cocommutative (shuffle) comultiplication:
it is defined by the free algebra $\Tens V$ modulo the
two-sided graded ideal generated by
$x\otimes y-(-1)^{|x||y|}y\otimes x$ for any homogeneous elements $x,y$ in $V$. Note that we shall keep the notation
$\Lambda V$ (used in the framework of rational homotopy theory
for the graded symmetric algebra,
see e.g.~\cite{FHT01}) for another object, the graded Grassmann algebra which will be explained further down. $\Sym V$ is well-known to be graded cocommutative connected as a coalgebra, the canonical filtration 
(see Appendix \ref{App: Graded Coalgebras})
simply being given by  $\oplus_{s=0}^r\Sym[s] V$ for all $r\in\mathbb{N}$. Moreover,
$\Sym V$ is \emph{free} as a graded commutative
associative  algebra, and \emph{cofree} as a graded cocommutative
coassociative connected coalgebra. This means that
for any graded cocommutative
coassociative connected coalgebra
$(C,\Delta_C,\varepsilon_C, \un_C)$ and
any linear map of degree $0$, $\varphi : C\to V$ such that
$\varphi(\un_C)=0$ there is a unique morphism of graded connected coalgebras $\Phi : C \to \Sym V$
such that $\pr_V\circ \Phi=\varphi$ where 
$\pr_V:\Sym V\to V$ denotes the obvious projection. Conversely,
every such morphism is given this way. Likewise, for any given morphism of graded connected coalgebras $\Psi:C \to\Sym V$, 
and any linear map $d : C \to V$ there is a unique graded coderivation of graded counital coalgebras $\mathcal{D}:C \to \Sym V$ along $\Psi$ such that $\pr_V\circ \mathcal{D}=d$. Again
every graded coderivation of unital coalgebras 
$\mathcal{D}:C \to \Sym V$ along $\Psi$ is given that way.  These induced maps can be computed 
as $\Phi = e^{*\varphi}$ 
(which we shall use henceforth as notation), and 
$\mathcal{D} = d * \Psi$ (for which we shall use the abridged notation $\overline{d}$) where $*$ is the corresponding \emph{convolution multiplication} in 
$\Hom(C,\Sym V)$
(see Appendix \ref{App: Graded Coalgebras}) for the definition
of convolution, and 
see e.g.~\cite{Hel89} (or \cite[App.A]{Bor15}) for the given formula.  The convolution
exponential
converges since $\varphi$ is of filtration degree $-1$ if
$C$ carries the canonical filtration and
$\Sym V$ is equipped with the trivial discrete filtration.
In the case $C=\Sym U$ which we shall mostly encounter in this
paper the maps $\mathcal{D}$ and $e^{*\varphi}$ are uniquely determined by the sequence of restrictions $d_n \coloneqq d|_{\Sym[n] U}\to V$
and $\varphi_n\coloneqq \varphi|_{\Sym[n] U}\to V$ (also
called \emph{Taylor coefficients}). 
It is not entirely necessary for this paper, but we would like
to mention that the graded commutator 
$[\overline{d_1},\overline{d_2}]$ of two coderivations
$\overline{d_1}, \overline{d_2}$ of $\Sym V$ (along the identity) is again a graded coderivation of $\Sym V$ (along the identity)
which is induced by the so-called
\emph{Nijenhuis-Richardson multiplication} $\circ_{NR}$ 
and \emph{Nijenhuis-Richardson bracket}
$[~,~]_{NR}$
on the space $\Hom(\Sym V,V)$ defined by
\[
  d_1\circ_{NR} d_2=d_1\circ \overline{d_2},~~\text{and}~~
  [d_1,d_2]_{NR}=d_1\circ_{NR} d_2 -(-1)^{|d_1||d_2|}d_2\circ_{NR} d_1
\]
whence $\big[\overline{d_1},\overline{d_2}\big]=
\overline{[d_1,d_2]_{NR}}$. The Nijenhuis-Richardson bracket
is well-known to be a graded Lie bracket, and the Nijenhuis-Richardson multiplication is a non-associative multiplication called graded pre-Lie, see \cite{Ger63}.

We shall also have to use the \emph{graded exterior algebra},  $\Lambda V$, defined as the free algebra $\Tens V$
modulo the two sided ideal generated by
$x\otimes y+(-1)^{|x||y|}y\otimes x$ for any homogeneous
elements $x,y$ in $V$. The induced multiplication is denoted by $\wedge$. The exterior algebra is $\mathbb{Z}\times \mathbb{Z}$-graded
(or \textbf{bigraded}) where an element 
$x_1\wedge\cdots \wedge x_k$
(where $x_1,\ldots,x_k$ are homogenous elements of $V$)
has bidegree $(k,|x_1|+\cdots+|x_k|)$. Assigning to
a pair of bidegrees $(k,i)$ and $(l,j)$ the grading
sign $(-1)^{kl+ij}$ the graded Grassmann algebra becomes a bigraded (co)commutative bialgebra. Note that for the comultiplication to make sense a bigraded Koszul rule
has to be used.
Recall that the shift $\phi\mapsto \phi[j]$ of multilinear maps
switches from graded symmetric to graded exterior algebras
for odd $j$: if $\varphi$ is a homogenous linear map
from $\Sym[k]V \to V'$ then its shift $\phi=\varphi[j]$
can be viewed as a bihomogeneous linear map
$\Lambda^k (V[j])\to V'[j]$ of bidegree $(1-k,|\varphi|+(k-1)j)$ if $j$ is odd.
In particular, a graded antisymmetric bilinear map
$c:\Lambda^2 W\to W$ of bidegree $(-1,0)$ will shift 
to a graded symmetric bilinear map $c[1]:\Sym[2] W[1]\to W[1]$
of degree $1$.

We note the following well-known
\begin{lemma} \label{Lem:MorphCoInv}
Let $\Phi\coloneqq e^{*\varphi} : \Sym U\to \Sym V$ 
be a morphism of graded
connected coalgebras. Then it is a bijection if and only if
the component $\varphi_1=e^{*\varphi}|_{U}:U\to V$ is a $\korps$-linear bijection.
\end{lemma}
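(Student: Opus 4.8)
The plan is to exploit the canonical filtration $F_r\Sym U \coloneqq \bigoplus_{s=0}^{r}\Sym[s] U$ and to show that, with respect to it, the leading behaviour of $\Phi=e^{*\varphi}$ is governed entirely by its linear Taylor coefficient $\varphi_1$. First I would record that $\varphi$ has filtration degree $-1$, since it annihilates $\un$ and lands in $\Sym[1]V=V$; hence each convolution power $\varphi^{*n}$ strictly lowers filtration degree for $n\geq 2$, while $\Phi$ itself is filtration preserving, $\Phi(F_r\Sym U)\subseteq F_r\Sym V$. The crucial step is then to identify the induced map on the associated graded $\mathrm{gr}_r \cong \Sym[r]$: evaluating $\Phi$ on a homogeneous product $u_1\bullet\cdots\bullet u_r$, the only contribution that survives in top filtration degree $r$ is the one applying $\varphi_1$ to each of the $r$ slots, every term involving some $\varphi_k$ with $k\geq 2$ consuming at least two factors and thus dropping into $F_{r-1}$. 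Therefore $\mathrm{gr}_r\Phi=\Sym[r](\varphi_1)$, the $r$-th graded symmetric power of $\varphi_1$ (up to the usual Koszul signs).

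For the forward implication, if $\varphi_1$ is a $\korps$-linear bijection then, in characteristic $0$, each symmetric power $\Sym[r](\varphi_1)$ is a bijection, so $\mathrm{gr}\,\Phi$ is bijective. A filtration-preserving linear map whose associated graded is bijective is itself bijective, by the standard induction over the short exact sequences $0\to F_{r-1}\to F_r\to \mathrm{gr}_r\to 0$ together with the five lemma, using that the filtration $\Sym U=\bigcup_r F_r\Sym U$ is exhaustive. This yields that $\Phi$ is a bijection. For the converse, I would use that $\Sym U$ is connected and cocommutative, so its space of primitive elements is exactly $U$, and that a morphism of coalgebras sends primitives to primitives with $\Phi|_U=\varphi_1$ (the higher terms $\varphi^{*n}$, $n\geq 2$, vanish on primitives, since iterated comultiplication of a primitive produces tensor factors equal to $\un$, which $\varphi$ kills). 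If $\Phi$ is bijective, then $\Phi^{-1}$ is again a coalgebra morphism, so both $\Phi$ and $\Phi^{-1}$ restrict to mutually inverse maps between the primitive subspaces $U$ and $V$; hence $\varphi_1=\Phi|_U$ is a bijection.

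The genuinely delicate point is the associated-graded computation $\mathrm{gr}_r\Phi=\Sym[r](\varphi_1)$: one must keep careful track of the Koszul signs generated by the iterated shuffle (co)multiplications and verify that every mixed term really lands in $F_{r-1}$, so that the leading symbol is precisely the symmetric power with no surviving cross-contributions. Once this filtration bookkeeping is secured, both implications follow formally, the forward one from the five-lemma argument and the converse from the functoriality of primitives under coalgebra isomorphisms.
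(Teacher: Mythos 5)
Your proposal is correct, but it establishes the substantive implication by a genuinely different route from the paper. For the direction where $\Phi$ is assumed bijective, the two arguments are essentially the same fact in two dialects: the paper uses that $\Phi$ and $\Phi^{-1}$ preserve the canonical filtration (so restrict to mutually inverse maps $U\to V$ and $V\to U$), while you use preservation of primitives under coalgebra isomorphisms; since $F_1(\Sym U)=\korps\un\oplus U$ and, in characteristic $0$, the primitives of $\Sym U$ are exactly $U$, these coincide. The divergence is in the converse. The paper factors $e^{*\varphi}=e^{*\varphi_1}\circ(\mathrm{id}_{\Sym U}+A)$ via the convolution identity $e^{*(\varphi_1+\varphi')}=e^{*\varphi_1}*e^{*\varphi'}$, checks that $A$ has filtration degree $-1$, and inverts $\mathrm{id}+A$ by the geometric series $\sum_{r\geqslant 0}(-A)^{\circ r}$; this produces an explicit formula $\Psi=(\mathrm{id}_{\Sym U}+A)^{-1}\circ e^{*\psi_1}$ for the inverse, in the same spirit as the perturbation-lemma formulas used later. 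You instead compute the associated graded of $\Phi$ for the canonical filtration, identify $\mathrm{gr}_r\Phi=\Sym[r](\varphi_1)$, and conclude by induction over $r$ with the five lemma and exhaustiveness. Your "delicate point" does work out: only the term $\tfrac{1}{r!}\varphi_1^{*r}$ survives in top filtration degree, and since $\varphi_1$ has degree $0$ the Koszul signs generated by the iterated shuffle comultiplication are exactly cancelled when the factors are multiplied back, so no residual signs appear. Your approach is more structural and avoids the convolution-exponential manipulations, at the price of not yielding a closed formula for $\Phi^{-1}$; both are complete proofs.
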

\begin{proof}
	If $\Phi$ is bijective with inverse $\Psi$ (which is also a map of graded connected coalgebras) then both maps
	are filtration preserving, hence $\Phi(U)\subset V$ and
	$\Psi(V)\subset U$ and hence induce $\korps$-linear
	bijections $\Phi|_U=\varphi_1:U\to V$ and 
	$\Psi|_V=\psi_1:V\to U$, respectively.\\
	Conversely, suppose that $\Phi|_U=\varphi_1:U\to V$ is bijective with
	inverse $\psi_1:V\to U$. By evaluating the projections
	$\pr_U$ and $\pr_V$, respectively, it follows that
	$e^{*\psi_1}\circ e^{*\varphi_1}=id_{\Sym U}$, and
	$e^{*\varphi_1}\circ e^{*\psi_1}=id_{\Sym V}$
	(equation of maps of graded connected coalgebras).
	Set $\varphi = \varphi_1 + \varphi'$ with 
	$\varphi' = \sum_{k=2}^\infty \varphi_k$ where $\varphi_k : \Sym[k] U \to V$. We can write
	\begin{equation*}
	 e^{* \varphi} = e^{* (\varphi_1 + \varphi')}
	= e^{* \varphi_1} * e^{* \varphi'}
	= e^{* \varphi_1}+e^{*\varphi_1} * \left( e^{* \varphi'} - \un \varepsilon \right)=: 
	e^{* \varphi_1}\circ (id_{\Sym U}+ A),
	\end{equation*}
	where $A=e^{*\psi_1}\circ\big(e^{*\varphi_1}*(e^{* \varphi'} - \un \varepsilon)\big)$ clearly is of filtration degree $-1$. Hence the inverse of $id_{\Sym U}+ A$ is given
	by the geometric series $\sum_{r=0}^\infty (-A)^{\circ r}$
	which converges by the completeness of 
	$\Hom (\Sym U,\Sym U)$, see Appendix \ref{App: Filtered vector spaces}. It follows that $\Phi$ has an inverse 
	$\Psi= (id_{\Sym U}+A)^{-1}\circ e^{*\psi_1}$.
\end{proof}
\noindent  See also \cite[Sec.~10.4,~Thm.~10.4.1]{LV12} for the 
more general operadic version.
\bigskip

Let $W = \oplus_{j \in \intg} W^j$ a $\intg$-graded vector space. Let $V=W[1]$ be the shifted graded vector space.

\begin{defi} \label{Def:L-InftyStruct}
	A \emph{$L_\infty$-structure on $W$} is defined to be a graded coderivation $\mathcal{D}=\overline{D}$ of $\Sym\,(W[1])$ of degree $1$ satisfying $\mathcal{D}^2=0$ and $\mathcal{D}(\un_{\Sym W[1]})=0$. 
	The pair $(W,\mathcal{D})$ is called an \emph{$L_\infty$-algebra}.
\end{defi}
Note that $D=\sum_{r=1}^\infty D_r$ is a $\korps$-linear map
$\Sym (W[1]) \to W[1]$ of degree $1$, and the first component $\mathcal{D}|_{W[1]}=D_1:W[1]\to W[1]$ of $\mathcal{D}$ is a differential, \ie $D_1^2=0$. $L_\infty$-algebras with
$D_1=0$ are called \emph{minimal}. In this  paper
we shall encounter particular $L_\infty$-algebras for which
$D_n=0$ for all integers $n\geqslant n_0$ for some nonnegative integer $n_0$.

A \emph{$L_\infty$-morphism} from a $L_\infty$-algebra 
$(W,
\mathcal{D})$ to a $L_\infty$-algebra $(W',\mathcal{D}')$ is a morphism of differential graded connected coalgebras $\Phi : \big(\Sym(W[1]),\mathcal{D}\big) \to \big(\Sym(W'[1]),
\mathcal{D}'\big)$, \ie $\Phi=e^{*\varphi}$ is a morphism of graded connected coalgebras (see Appendix \ref{App: Graded Coalgebras}) intertwining differentials,
\begin{equation}\label{EqDefLinfinityMorphism}
   \Phi \circ \mathcal{D} = \mathcal{D}' \circ \Phi.
\end{equation}
Moreover, a $L_\infty$-map $\Phi$ is called an \emph{$L_\infty$-quasi-isomorphism} if its first component $\Phi_1= \Phi|_{W[1]}=\varphi_1:W[1]\to W'[1]$ --which is a chain map $(W[1],D_1)\to (W'[1],D'_1)$-- induces an isomorphism in cohomology. It can be shown that every 
$L_\infty$-quasi-isomorphism has a \emph{quasi-inverse}, \ie 
a $L_\infty$-morphism $\Psi=e^{*\psi}$ going from $\big(\Sym(W'[1]),\mathcal{D}'=\overline{D'}\big)$ to
$\big(\Sym(W[1]),\mathcal{D}=\overline{D}\big)$ such that the chain map $\psi_1=\Psi|_{W'[1]}:(W'[1],D'_1)
\to (W'[1],D'_1)$ induces an isomorphism in cohomology,
see e.g.~\cite{K03}, \cite[Prop.~V2]{AMM02}, or
\cite[Thm.~10.4.4]{LV12}. It even
follows that $\Phi$ and $\Psi$ induce isomorphism of the
cohomologies with respect to the entire differentials 
$\mathcal{D}$ and $\mathcal{D}'$ but we shall not need this statement.

A very important example of a $L_\infty$-algebra (motivating
the whole structure) is a
\emph{differential graded Lie algebra} $(\mathfrak{G},b,[~,~])$,
\ie $(\mathfrak{G},[~,~])$ is a graded Lie algebra and the
$\korps$-linear map $b:\mathfrak{G}\to\mathfrak{G}$ is of degree
$1$, $b^2=0$, and $b$ is a graded derivation of the 
graded Lie bracket $[~,~]$. In this case, on the shifted space $V=\mathfrak{G}[1]$, one sets $D_1=b[1]$, and $D_2=[~,~][1]$, $D_n=0$
for all $n\geqslant 3$, and the structure of a differential graded
Lie algebra ensures that $\overline{D}^2=0$. Hence
one gets a $L_\infty$-structure on $\mathfrak{G}$. Moreover,
it is well-known that its
cohomology $\mathfrak{H}$ with respect to $b$ carries a canonical graded Lie bracket $[~,~]_H$ induced from $[~,~]$.
Likewise, on the shifted space $\mathfrak{H}[1]$ the map $d=d_2=[~,~]_H[1]$ is the Taylor coefficient of order $2$ of a coderivation $\overline{d}$ of square
zero on $\Sym (\mathfrak{H}[1])$.\\
The \emph{formality problem for differential 
graded Lie algebras} is the question whether there is an
$L_\infty$-quasi-isomorphism $\Phi=e^{*\varphi}$
from $\big(\Sym (\mathfrak{H}[1]), \overline{[~,~]_H[1]}\big)$
to $\big(\Sym (\mathfrak{G}[1]),\overline{(b[1]+[~,~][1])}\big)$.
This is the analog of D.~Sullivan's formality for differential
graded associative algebras, see e.g.~\cite[p.156]{FHT01}.\\
We shall not need this in the sequel, but recall
that a stronger and more classical notion is a 
 \emph{quasi-isomorphism
of differential graded Lie algebras} which is a morphism of
differential graded Lie algebras whose induced morphism of
graded Lie algebras on cohomologies is an isomorphism.
Clearly, for a given quasi-isomorphism there is in general
not a quasi-isomorphism in the other direction inducing
the inverse morphism on cohomology. There is the notion of two
differential graded Lie algebra being \emph{weakly quasi-isomorphic}
if there is a finite zig-zag of quasi-isomorphisms of intermediate
differential graded Lie algebras with ends at the two given
Lie algebras. This notion turns out to be equivalent to
$L_\infty$-quasi-isomorphism, 
see e.g.~\cite[p.423, Thm.11.4.9]{LV12}.

There are several important differential graded Lie algebras 
describing the identities, the (co)homology, and the algebraic deformation theory of certain classes of algebras, see 
e.g.~the book \cite{LV12}. The one of interest in this paper 
concerns
the class of associative algebras and has been invented by M.~Gerstenhaber, \cite{Ger63}:\\
Let $(\mathcal{A},\mu)$ be an associative (not necessarily unital and trivially graded) algebra over the field $\korps$ of characteristic $0$. On the \emph{Hochschild complex} $\Hochcochains \coloneqq \bigoplus_{n \in \nat} \Hochcochains[n](\mathcal{A},\mathcal{A})$ of $\mathcal{A}$
with values in the bimodule $\mathcal{A}$ (considered with
its natural grading by number of arguments), recall the \emph{Gerstenhaber multiplication} 
$\circ_G : \Hochcochains \times \Hochcochains \to \Hochcochains$ which is a bilinear map of degree $-1$ defined for any nonnegative
integers $k,l$ and any $f\in \Hochcochains[k](\mathcal{A},\mathcal{A})$ and any
$g\in \Hochcochains[l](\mathcal{A},\mathcal{A})$
by (for any $a_1,\ldots,a_{k+l-1}\in\mathcal{A}$)
\begin{equation}\label{EqDefGerstenhaberMultiplication}
\begin{split}
(f \circ_G g)&(a_1,\dotsc ,a_{k+l-1}) = \\
& \sum_{i=1}^{k} (-1)^{(i-1)(l-1)} f(a_1,\dotsc ,a_{i-1},g(a_i,\dotsc ,a_{i+l-1}),a_{i+l},\dotsc ,a_{k+l-1}).
\end{split}
\end{equation}
This multiplication can be considered on the shifted space
$\mathfrak{G}(\mathcal{A})=\mathfrak{G} \coloneqq \Hochcochains{}[1]$
and the graded commutator,
\begin{equation}\label{EqDefGerstehaberBracket}
    [f,g]_G=f \circ_G g - (-1)^{(k-1)(l-1)} g \circ_G f,
\end{equation}
(where of course $k-1$ is the shifted degree of a $k$-cochain
$f$) is called the \emph{Gerstenhaber bracket} and turns out
to be a graded Lie bracket on $\mathfrak{G}$. The proof of
the graded Jacobi-identity is largely simplified by the 
classical observation that $\mathfrak{G}$ is isomorphic
to the space of all coderivations of the tensor algebra
$\Tens (\mathcal{A}[1])$ equipped with the usual deconcatenation
comultiplication (for which it is connected and cofree), but this is not important for the sequel.

Note that the algebra multiplication $\mu$
of $\mathcal{A}$ did not enter in the definition of the Gerstenhaber multiplication and bracket. For the shifted
version $\mathfrak{G}=\Hochcochains{}[1]$ any bilinear map $\mu:\mathcal{A}\times \mathcal{A}
\to \mathcal{A}$ is of degree $1$, and gives rise to an associative multiplication iff $[\mu,\mu]_G=0$. Moreover,
for any such $\mu$ the square of $b := [\mu,~]_G$ vanishes and defines, up
to a global sign, the Hochschild coboundary operator on
the complex $\Hochcochains{}[1]$. Hence $\big(\mathfrak{G}(\mathcal{A}),[~,~]_G,b = [\mu,~]_G\big)$
is a differential graded Lie algebra associated to any associative algebra $(\mathcal{A},\mu)$. It follows that
the shifted \emph{Hochschild cohomology}, $\mathfrak{H}(\mathcal{A}) =
\Hoch(\mathcal{A},\mathcal{A})[1]$ is a graded Lie algebra
with the induced Lie bracket $[~,~]_H$.\\
Maxim Kontsevich has linked the deformation problem in
deformation quantization of Poisson manifolds to the 
formality problem of the above differential graded Lie algebra
built on the Hochschild cochain complex of the
algebra $\mathcal{A}$ of all smooth functions on the underlying differentiable manifold, see \cite{K03}.

The problem we would like to consider in this paper
is contained in the framework of the following slight generalization of the formality
problem: let $(\mathfrak{G},[~,~],b)$ be a differential
graded Lie algebra, and let $(\mathfrak{H},[~,~]_H)$ be its
cohomology graded Lie algebra. Again let $D=D_2=b[1]+[~,~][1]$
denote the Taylor coefficients of the corresponding coderivation $\overline{D}$ of 
$\Sym (\mathfrak{G}[1])$, and we fix this $L_\infty$-structure
on $\mathfrak{G}$. We shall put a general minimal
$L_\infty$-structure on $\mathfrak{H}$ whose coderivation
$\overline{d}$ (of $\Sym (\mathfrak{H}[1])$) is given by a series
$d= d_2+\sum_{k \geqslant 3}d_k=d_2+d'$ where $d_2=[~,~]_H[1]$. It is well-known (see \cite{K03}, \cite{AMM02}, \cite{LV12})
that it is always possible to find a sequence
of `higher order brackets' $d_k:\Sym[k](\mathfrak{H}[1])
\to \mathfrak{H}[1]$ for $k \geqslant 3$ and a $L_\infty$-quasi-isomorphism $\Phi=e^{*\varphi}:
\big(\Sym (\mathfrak{H}[1]),\overline{[~,~]_H[1]+d'}\big)\to
\big(\Sym (\mathfrak{G}[1]),\overline{b[1]+[~,~][1]}\big)$
which also follows from the homotopy perturbation Lemma,
see the next Section. If all the higher order brackets $d'_k$ for $k\geqslant 3$
vanish there is formality, and if formality holds then
they can be transformed away by the conjugation with an invertible chain map
of differential graded coalgebras $\Sym (\mathfrak{H}[1])
\to \Sym (\mathfrak{H}[1])$.

\subsection{Low orders and a characteristic $3$-class}

This Subsection is well-known for the description of 
recursive obstructions to $L_\infty$-quis, see e.g.~
\cite{GH03}, Appendix of \cite{BGHHW05}, \cite{LV12}.
We have included it to get explicit formulae.
Let $\big(\mathfrak{G},[~,~]_G,b\big)$ be a differential
graded Lie algebra, and let 
$\big(\mathfrak{H},[~,~]_H,\big)$ be its cohomology. As before let $D=D_1+D_2=b[1]+ [~,~]_G[1]$ and $d=d_2=[~,~]_H[1]$ the 
Taylor coefficients of the corresponding graded coderivations
$\overline{D}$ and $\overline{d}$ of degree $1$
of $\Sym (\mathfrak{G}[1])$ and $\Sym (\mathfrak{H}[1])$, respectively. Clearly $\overline{D}^2=0$ and
$\overline{d}^2=0$.
Let $\varphi=\sum_{r=1}^{\infty}\varphi_r$ be a
$\korps$-linear map of degree $0$ 
from $\Sym (\mathfrak{H}[1])$ to $\Sym (\mathfrak{G}[1])$
with $\varphi_r=\varphi|_{{\Sym}^r (\mathfrak{H}[1])}$,
and let $\Phi=e^{*\varphi}$ the corresponding morphism
of connected coalgebras.
Define, as in \cite[Prop.A3]{BGHHW05},
\begin{equation}
  \overline{P}(\varphi)=\overline{D}\circ \Phi-\Phi\circ \overline{d},~~
  \mathrm{and}~~P(\varphi)=
  D\circ e^{*\varphi}-\varphi\circ \overline{d},
\end{equation}
the latter being the projection to $\mathfrak{G}[1]$. Clearly,
$\overline{P}(\varphi)$ is a graded coderivation of degree $1$ from
$\Sym (\mathfrak{H}[1])$ to $\Sym (\mathfrak{G}[1])$ along
$\Phi$, and is of course uniquely determined by its Taylor coefficient $P(\varphi)$. We shall call $\Phi$ an 
\emph{$L_\infty$-morphism of order $r$} if $P_s(\varphi)=0$
for all integers $1\leq s\leq r$. Clearly, $\Phi$ is an
$L_\infty$ morphism iff it is a $L_\infty$-morphism 
of order $r$ for each positive integer $r$. Since
$\Phi$ is filtration preserving it clearly follows that
$\Phi$ being a $L_\infty$-morphism of order $r$ gives only conditions on the maps $\varphi_1,\ldots,\varphi_r$, the higher orders not being affected.
We can `regauge'
$\Phi$ in the following way: let $\alpha:
\Sym (\mathfrak{H}[1])\to\mathfrak{H}[1]$ and
$\beta:\Sym (\mathfrak{G}[1])\to \mathfrak{G}[1])$ be $\korps$-linear maps of degree $0$ vanishing on the unit elements, and write $\mathcal{A}=e^{*\alpha}$ and
$\mathcal{B}=e^{*\beta}$ for the corresponding morphisms of
connected coalgebras. Supposing that $\mathcal{A}\circ \overline{d}=\overline{d}\circ \mathcal{A}$ and
$\mathcal{B}\circ \overline{D}=\overline{D}\circ \mathcal{B}$
it is straight-forward to see that the regauged $\varphi$,
\ie $\varphi'=\beta\circ \Phi\circ \mathcal{A}$, satisfies
\begin{equation}\label{EqCompGaugingP}
  P(\varphi')
  = \beta\circ \overline{P}(\varphi)\circ \mathcal{A},
\end{equation}
whence for each positive integer $r$ it follows that $e^{*\varphi'}= \mathcal{B}\circ \Phi\circ \mathcal{A}$
is a $L_\infty$-morphism of order $r$ if $\Phi$ is.

We compute $P_r(\varphi)$ for $r=1,2,3$: for any homogeneous
elements $x_1,x_2,x_3\in\mathfrak{H}[1]$
\begin{eqnarray}
   P_1(\varphi) & = &
         b[1]\circ \varphi_1,    \label{EqCompP1} \\
   P_2(x_1\bullet x_2)  & = &
        b[1]\big(\varphi_2(x_1\bullet x_2)\big)
        + D_2\big(\varphi_1(x_1)\bullet\varphi_1(x_2)\big)
               \nonumber \\
    &   &    -\varphi_1\big(d_2(x_1\bullet x_2)\big),
    \label{EqCompP2} \\
   P_3(x_1\bullet x_2\bullet x_3)  
       & = &
       b[1]\big(\varphi_3(x_1\bullet x_2\bullet x_3)\big)
       + D_2\big(\varphi_1(x_1)\bullet
             \varphi_2(x_2\bullet x_3)\big)\nonumber \\
     & &    + (-1)^{|x_1||x_2|}
         D_2\big(\varphi_1(x_2)\bullet
         \varphi_2(x_1\bullet x_3)\big)
         \nonumber \\
         & & +(-1)^{|x_3|(|x_1|+|x_2|)}
         D_2\big(\varphi_1(x_3)\bullet
         \varphi_2(x_1\bullet x_2)\big) \nonumber \\
         & & -\varphi_2\big(d_2(x_1\bullet x_2)\bullet x_3\big)
         -(-1)^{|x_2||x_3|}
            \varphi_2\big(d_2(x_1\bullet x_3)\bullet x_2\big) 
            \nonumber \\
            & &
            - (-1)^{|x_1|(|x_2|+|x_3|)}
            \varphi_2\big(d_2(x_2\bullet x_3)\bullet x_1\big). 
            \label{EqCompP3}
\end{eqnarray}
 Let $Z\mathfrak{G}$ and
$B\mathfrak{G}$ denote the graded vector space of all 
cocycles and coboundaries of the complex
$(\mathfrak{G},b)$, respectively, let 
$\pi:Z\mathfrak{G}\to \mathfrak{H}$ be the canonical projection
which obviously is a morphism of graded Lie algebras.
Then the cohomology of the shifted complex
$(\mathfrak{G}[1],b[1])$ can be identified with the shifted
cohomology $\mathfrak{H}[1]$. An $L_\infty$-morphism $\Phi=e^{*\varphi}:\Sym (\mathfrak{H}[1])\to
\Sym (\mathfrak{G}[1])$ of order $r$
is called a $L_\infty$-quis of order $r$ if 
$\varphi_1:\mathfrak{H}[1]\to\mathfrak{G}[1]$
induces an isomorphism in cohomology. The latter condition
is equivalent to stating that $b[1]\circ \varphi_1=0$ and
$\pi[1]\circ \varphi_1:\mathfrak{H}[1]\to\mathfrak{H}[1]$ is
invertible. More specifically, we shall call any linear map
map of degree $0$ 
$i:\mathfrak{H}[1]\to \mathfrak{G}[1]$ a \emph{section}
if all the values of $i$ are in $Z\mathfrak{G}[1]$ and if
\begin{equation}\label{EqDefSection}
\pi[1]\circ i= \mathrm{id}_{\mathfrak{H}[1]}.
\end{equation}
 The following
statements seem to be well-known:
\begin{lemma}\label{LP1P2Vanishing}
	With the above notations one has:
	\begin{enumerate}
		\item There is a $L_\infty$-quis 
		$\Phi=e^{*\varphi}:\Sym (\mathfrak{H}[1])\to
		\Sym (\mathfrak{G}[1])$
		of order $2$ such that $\varphi_1$ is a section.
  \item Let $\Phi=e^{*\varphi}$and $\Psi=e^{*\psi}$ be two $L_\infty$-quis of order $2$ where $\varphi_1$ is a section.\\
     Then there are linear maps $\alpha_1:\mathfrak{H}[1]\to
     \mathfrak{H}[1]$ and $\beta:\Sym (\mathfrak{G}[1])\to
     \mathfrak{G}[1]$ of degree $0$ (where $\beta$ vanishes on
     the unit), and $\chi_2:{\Sym}^2(\mathfrak{H}[1])\to
     \mathfrak{G}[1]$ of degree $0$ such that the morphisms of connected coalgebras
     $e^{*\alpha_1}$ and $e^{*\beta}$ commute with the corresponding differentials $\overline{d}$ and
     $\overline{D}$, respectively, and such that the regauged
     morphism 
     $e^{*\psi'}=\Psi'=e^{*\beta}\circ \Psi\circ e^{*\alpha_1}$
	 is a $L_\infty$-quis of order $2$ with
	 \begin{equation}\label{EqCompP1P2EqualZeroSectionCompare}
	   \psi'_1=\varphi_1,~~b[1]\circ \chi_2=0,
	     ~~\mathrm{and}~~
	   \psi_2'=\varphi_2+\chi_2.
	 \end{equation}	
	\item Let
	$\varphi:\Sym (\mathfrak{H}[1])\to \mathfrak{G}[1]$
	be a linear map of degree $0$ vanishing on $\un$.
	Then $\Phi=e^{*\varphi}$ is a $L_\infty$-quis of order $2$
	if and only if the shifted maps $\phi_1=\varphi_1[1]$ and $\phi_2=
	\varphi_2[1]$ satisfy for all $y,y'\in \mathfrak{H}$
	\begin{eqnarray}
	\lefteqn{0=b\circ \phi_1,~~~\pi\circ\phi_1~\mathrm{invertible},~\mathrm{and}} \nonumber \\
	& &~~~~
	0=b\big(\phi_2(y,y')\big)
	  +\big[\phi_1(y),\phi_1(y')\big]_G
	       -\phi_1\big([y,y']_H\big).
	       \label{EqCompShiftedLInfinityOrder2}
	\end{eqnarray} 
   \end{enumerate} 
\end{lemma}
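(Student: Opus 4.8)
\emph{Part (3).} I would treat the three assertions in the order (3), (1), (2), since (3) furnishes the shifted dictionary that makes the obstruction computations transparent and the section produced in (1) is the hypothesis of (2). Part (3) itself is a pure desuspension. As recalled just before \eqref{EqDefSection}, ``$\varphi_1$ induces an isomorphism in cohomology'' means exactly $b[1]\circ\varphi_1=0$ together with $\pi[1]\circ\varphi_1$ invertible; since the shift $[1]$ of a map with one input and one output preserves both the degree and the underlying ungraded map, this is equivalent to $b\circ\phi_1=0$ and $\pi\circ\phi_1$ invertible for $\phi_1=\varphi_1[1]$. It then remains to rewrite $P_1(\varphi)=0$ and $P_2(\varphi)=0$ from \eqref{EqCompP1}--\eqref{EqCompP2} in the unshifted variables. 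Using the shift relations \eqref{EqDefShiftJPlusJPrimeAndCirc}--\eqref{EqDefShiftTensorProduct} together with $D_2=[~,~]_G[1]$ and $d_2=[~,~]_H[1]$, the term $D_2\big(\varphi_1(x_1)\bullet\varphi_1(x_2)\big)$ desuspends to $\big[\phi_1(y),\phi_1(y')\big]_G$ and $\varphi_1\big(d_2(x_1\bullet x_2)\big)$ to $\phi_1\big([y,y']_H\big)$, giving \eqref{EqCompShiftedLInfinityOrder2}. I expect the only delicate point to be the bookkeeping of the Koszul signs produced by \eqref{EqDefShiftTensorProduct}, which should cancel so that no extra sign survives, exactly as in the symmetric-to-antisymmetric passage recorded after \eqref{EqDefShiftTensorProduct}.

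\emph{Part (1).} First I would pick $\varphi_1$ to be a section: as $\pi:Z\mathfrak{G}\to\mathfrak{H}$ is surjective and $\korps$ is a field, it has a linear splitting, which after shifting yields $\varphi_1:\mathfrak{H}[1]\to Z\mathfrak{G}[1]$ with $\pi[1]\circ\varphi_1=\mathrm{id}$, so that $P_1(\varphi)=b[1]\circ\varphi_1=0$ by \eqref{EqCompP1}. For $P_2$ I set $c:=\varphi_1\circ d_2-D_2\circ(\varphi_1\bullet\varphi_1):\Sym[2](\mathfrak{H}[1])\to\mathfrak{G}[1]$ and show it is coboundary-valued. It lands in cocycles because $b[1]\circ\varphi_1=0$ and $b[1]=D_1$ is a graded derivation of $D_2$ (the arity-$2$ part of $\overline{D}^2=0$), so $b[1]\circ c=0$; and its class vanishes because, $\pi$ being a morphism of graded Lie algebras with $\pi[1]\circ\varphi_1=\mathrm{id}$, both $\varphi_1\circ d_2$ and $D_2\circ(\varphi_1\bullet\varphi_1)$ represent the class $d_2$. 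Thus $c$ takes values in $B\mathfrak{G}[1]$, and choosing a linear splitting $h$ of $b[1]:\mathfrak{G}[1]\to B\mathfrak{G}[1]$ I set $\varphi_2:=h\circ c$; this is graded symmetric since $c$ is defined on $\Sym[2](\mathfrak{H}[1])$, and it solves $b[1]\circ\varphi_2=c$, that is, $P_2(\varphi)=0$ by \eqref{EqCompP2}.

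\emph{Part (2).} The key observation is that $g:=\pi[1]\circ\psi_1$ is an automorphism of the graded Lie algebra $\mathfrak{H}$: from $P_2(\psi)=0$ the map $\psi_1\circ d_2-D_2\circ(\psi_1\bullet\psi_1)$ is coboundary-valued, and passing to classes (using that $\psi_1$ is cocycle-valued and that $\pi$ is a Lie morphism) gives $g\circ d_2=d_2\circ(g\bullet g)$; as $g$ is invertible, $\alpha_1:=g^{-1}$ is again a Lie automorphism, so $e^{*\alpha_1}$ commutes with $\overline{d}=\overline{d_2}$. Regauging by $\alpha_1$ alone (with trivial target gauge, so that \eqref{EqCompGaugingP} preserves order $2$) replaces $\psi_1$ by $\tilde\psi_1:=\psi_1\circ\alpha_1$, which is now a section. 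To move $\tilde\psi_1$ to $\varphi_1$ I note that $\gamma:=\varphi_1-\tilde\psi_1$ is cocycle-valued with vanishing class, hence $\gamma=b[1]\circ\rho$ for a degree-$(-1)$ map $\rho$; choosing a degree-$(-1)$ operator $R$ on $\mathfrak{G}[1]$ with $R\circ\tilde\psi_1=\rho$ and forming the degree-$0$ coderivation $X:=[\overline{D},\overline{R}]$, the graded Jacobi identity and $\overline{D}^2=0$ give $[\overline{D},X]=0$, so $\mathcal{B}:=e^{X}=e^{*\beta}$ is a coalgebra automorphism commuting with $\overline{D}$ whose linear part is $\mathrm{id}+b[1]R+Rb[1]$ and hence sends $\tilde\psi_1(y)$ to $\varphi_1(y)$. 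By \eqref{EqCompGaugingP} the regauged $\Psi'=e^{*\beta}\circ\Psi\circ e^{*\alpha_1}$ remains an $L_\infty$-quis of order $2$ with $\psi_1'=\varphi_1$. Finally I subtract the two equations $P_2(\psi')=0$ and $P_2(\varphi)=0$ of \eqref{EqCompP2}, which now share the same first component $\varphi_1$; this yields $b[1]\circ(\psi_2'-\varphi_2)=0$, so $\chi_2:=\psi_2'-\varphi_2$ satisfies \eqref{EqCompP1P2EqualZeroSectionCompare}.

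\emph{Main obstacle.} The hard part will be the target-side gauge in part (2): $\mathcal{B}$ must commute with the \emph{full} differential $\overline{D}$ (so that \eqref{EqCompGaugingP} applies) while its linear component realizes \emph{exactly} the coboundary correction $\gamma$ with no higher-order spillover and with $e^{X}$ genuinely convergent. Since the linear part of $e^{X}$ is $e^{b[1]R+Rb[1]}$ rather than $\mathrm{id}+b[1]R+Rb[1]$, making the two agree on $\mathrm{im}\,\tilde\psi_1$ forces side conditions on $R$ of Hodge type (such as $R^2=0$ and $Rb[1]R=0$), that is, a careful choice of the contracting homotopy of $(\mathfrak{G}[1],b[1])$. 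This is precisely the bookkeeping that the homotopy perturbation lemma of \autoref{Sec:Perturbation-lemma} is designed to automate, and I would ultimately invoke it to discharge these side conditions and the convergence of the gauge cleanly.
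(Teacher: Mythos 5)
Your proposal is correct and follows essentially the same route as the paper's proof in all three parts (splitting $\pi$ over $Z\mathfrak{G}$ for (1), regauging by $\alpha_1=(\pi[1]\circ\psi_1)^{-1}$ on the source and by the exponential of a coderivation $[\overline{D},\overline{R}]$ on the target for (2), and desuspending $P_1=P_2=0$ for (3)). The ``main obstacle'' you flag is resolved in the paper exactly as you predict, not via the perturbation lemma but by choosing $R$ (the paper's $\chi_1$) to vanish on $B\mathfrak{G}[1]$ and on a complement $W'$ of $Z\mathfrak{G}[1]$ while taking values in $W'$, which forces $R^2=0$ and $R\circc b[1]=0$ so that the exponential terminates after its linear term.
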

\begin{proof}
 \textbf{1.} Choose a vector space complement $\mathcal{H}$ of
 $B\mathfrak{G}$ in $Z\mathfrak{G}$, then the restriction of
 $\pi$ to $\mathcal{H}$ is clearly invertible. Let $\phi_1$ be
 the inverse of this map followed by the inclusion of cocycles.
 Then $\pi\circ \phi_1=\mathrm{id}_\mathfrak{H}$, and the
 shift $\varphi_1=\phi_1[1]$
 gives the desired section. Moreover, since $\pi$ is a morphism
 of graded Lie algebras, it follows that for all $y_1,y_2\in
 \mathfrak{H}$ the difference
 $[\phi_1(y_1),\phi_1(y_2)]_G-\phi_1\big([y_1,y_2]_H\big)$
 (which obviously is in $Z\mathfrak{G}$) lies in the kernel of $\pi$ and therefore is
 in $B\mathfrak{G}$, thus proving the existence of a linear map $\varphi_2$ of degree $-1$
 such that $P_2(\varphi)=0$.\\
 \textbf{2.} Since the component $\psi_1$ is a quis, it follows that
 $\pi[1]\circ \psi_1$ is a linear isomorphism of $\mathfrak{H}[1]$
 which intertwines $d_2$ which can be seen by applying $\pi[1]$ to the
 right hand side of the equation $0=P_2(\psi)$. Hence $\alpha=\alpha_1$
 can be defined as the inverse of this map. Then $\tilde{\Psi}
 =\Psi\circ e^{*\alpha_1}$ is a $L_\infty$-quis such that 
 $\tilde{\psi}_1$ is a section. It follows that the difference
 $\tilde{\psi}_1-\varphi_1$ is a linear map from
 $\mathfrak{H}[1]$ into the  space of all coboundaries $B\mathfrak{G}[1]$. Upon choosing a graded vector space complement $W'$
 of $Z\mathfrak{G}[1]$ in $\mathfrak{G}[1]$ we can define a linear
 map $\chi_1:\mathfrak{G}[1]\to\mathfrak{G}[1]$ of degree $-1$ vanishing
 on the coboundaries and on $W'$ and having all its values in $W'$
 such that $\tilde{\psi}_1-\varphi_1= b[1]\circ \chi_1\circ\varphi_1$.
 Thanks to the definition of $\chi_1$ it follows that
 $\chi_1\circ \chi_1=0$, $\chi_1\circ b[1]=0$, and 
 $b[1]\circ \chi_1\circ\tilde{\psi}_1=b[1]\circ \chi_1\circ\varphi_1$.
 Moreover the graded linear map $T$ of degree $0$ defined by
 $T= -[\overline{D},\overline{\chi_1}]$ is a graded coderivation of degree $0$ of
 the graded connected coalgebra $\Sym (\mathfrak{G}[1])$ which can
 be computed (in a straight-forward way) to be
 locally nilpotent in the sense that for each element 
 $c\in\Sym (\mathfrak{G}[1])$ there is a positive integer $N$ such that
 $T^{\circ N}(c)=0$. Therefore the composition exponential 
 $\mathcal{B}=e^{\circ T}$ is a well-defined morphism of connected coalgebras $\Sym (\mathfrak{G}[1])\to\Sym (\mathfrak{G}[1])$ 
 commuting with $\overline{D}$ since obviously $[\overline{D},T]=0$.
 Hence $\Psi'=\mathcal{B}\circ \tilde{\Psi}$ is a $L_\infty$-quis,
 (and hence of the form $e^{*\psi'}$),
 and a straight-forward computation gives $\psi'_1=\varphi_1$.
 Finally equation $P_2(\psi')=0=P_2(\varphi)$ implies that
 $0=b[1]\circ (\psi'_2-\varphi_2)$ proving the second statement of the Lemma.\\
 \textbf{3.} This follows directly from the shifted versions
 of eqs
 (\ref{EqCompP1}) and (\ref{EqCompP2}) upon
  using the rules
 (\ref{EqDefShiftJPlusJPrimeAndCirc}) and
 (\ref{EqDefShiftTensorProduct}).
\end{proof}
In order to prepare the grounds for the characteristic $3$-class,
we recall one variant of \emph{graded Chevalley Eilenberg cohomology
	of a graded Lie algebra 
	$\big(\mathfrak{a},[~,~]_\mathfrak{a}\big)$}: the shifted
	Lie bracket $d_2=[~,~]_\mathfrak{a}[1]$ is an element of
	$\Hom \big(\Sym  (\mathfrak{a}[1]),\mathfrak{a}[1]\big)$ of degree
	$1$ satisfying $[d_2,d_2]_{NR}=0$. Hence the linear map of
	degree $1$ from 
	$\Hom \big(\Sym  (\mathfrak{a}[1]),\mathfrak{a}[1]\big)$ 
	to itself, sending
	$g$ to $[d_2,g]_{NR}$ is a codifferential. Applying the shift 
	$[-1]$ yields a codifferential $\delta_\mathfrak{a}$ on the space
	$\Hom (\Lambda \mathfrak{a},\mathfrak{a})$. 
	The following formula can be computed
	in a straight-forward manner for any homogeneous $\phi_k$ in $\Hom (\Lambda^k \mathfrak{a},\mathfrak{a})$
	and homogeneous elements $y_1,\ldots,y_{k+1}\in\mathfrak{a}$ by computing the shift
	using the formulas (\ref{EqDefShiftJPlusJPrimeAndCirc}) 
	and (\ref{EqDefShiftTensorProduct}):
	\begin{eqnarray}
	  \lefteqn{
	  (\delta_\mathfrak{a}\phi_k)(y_1,\ldots,y_{k+1})} 
	\nonumber \\
	&=&
	\sum_{i=1}^{k+1}(-1)^{i-1}(-1)^{|\phi_k||y_i|}
	              (-1)^{|y_i|(|y_1|+\cdots+|y_{i-1}|)}
	              \nonumber \\
	& &~~~~~~~~~
       \big[y_i,\phi_k(y_1,\ldots,y_{i-1},y_{i+1},\ldots,
	              y_{k+1})\big]_\mathfrak{a}\nonumber \\
	& & + \sum_{1\leq i<j\leq k+1}(-1)^{i+j}
	               (-1)^{(|y_i|+|y_j|)
	               	  (|y_1|+\cdots+|y_{i-1}|)}
	               	(-1)^{|y_j|(|y_{i+1}|+\cdots+|y_{j-1}|)}
	               	\nonumber \\
	& & ~~~~~~~~~
	    \phi_k\big([y_i,y_j]_\mathfrak{a},
	    y_1,\ldots,y_{i-1},y_{i+1},\ldots,y_{j-1},y_{j+1},
	    \ldots,y_{k+1}\big).
	    \label{EqCompGradedChevalleyEilenberg}
	\end{eqnarray}
	where we have left out an inessential global factor of
$-(-1)^{\frac{k(k-1)}{2}}$ appearing in the computation.
In case all arguments $y_1,\ldots,y_{k+1}$ are of even degree	
the usual formula --going back to the exterior derivative of
$k$-forms-- is easily recognised.
Note that $\delta_\mathfrak{a}$ increases the number of arguments
by one, but is of degree zero since the graded Lie bracket is.

We state the following well-known fact in order to have 
a concrete formula which can be computed in examples:
\begin{prop}\label{PW3Z3C3}
	With the above-mentioned definitions let
	$\varphi:\Sym (\mathfrak{H}[1])\to \mathfrak{G}[1]$
	be a linear map of degree $0$ vanishing on $\un$, and suppose that $e^{*\varphi}$ defines a $L_\infty$-quis of
	order $2$ where $\varphi_1$ is a section. Let
	$\phi=\varphi[-1]:\Lambda \mathfrak{H}\to \mathfrak{G}$
	be the shift of $\varphi$. 
	Then the following holds:
	\begin{enumerate}
		\item The linear map $w_3=w_3(\phi):\Lambda^3 \mathfrak{H}\to
		  \mathfrak{G}$ of degree $-1$ defined on homogeneous
		  elements $y_1,y_2,y_3\in\mathfrak{H}$ by
		  \begin{eqnarray}
		  \lefteqn{w_3(\phi)(y_1,y_2,y_3)  = } \nonumber \\
		 & & (-1)^{|y_1|}\big[\phi_1(y_1),\phi_2(y_2,y_3)\big]_G
		  -(-1)^{|y_2|} (-1)^{|y_2||y_1|}
		  \big[\phi_1(y_2),\phi_2(y_1,y_3)\big]_G
		  \nonumber \\
		& & ~~~~~~~~~~~~~+ 
		   (-1)^{|y_3|}(-1)^{|y_3|(|y_1|+|y_2|)}
		  \big[\phi_1(y_3),\phi_2(y_1,y_2)\big]_G
		  \nonumber \\
		& & -\phi_2\big([y_1,y_2]_H, y_3\big)
		     + (-1)^{|y_3||y_2|}
		     \phi_2\big([y_1,y_3]_H, y_2\big)
		     \nonumber \\
		& & 
		~~~~~~~~~~~~~~~~-(-1)^{(|y_2|+|y_3|)|y_1|}
		   \phi_2\big([y_2,y_3]_H, y_1\big)
		   \label{EqDefWThree}
		  \end{eqnarray}
		satisfies $b\circ w_3=0$.
		\item The linear map 
		 \begin{equation}\label{EqDefZThree}
		 z_3=z_3(\phi)=\pi\circ w_3
		 \end{equation} 
		 from
		$\Lambda^3 \mathfrak{H}$ to $\mathfrak{H}$
		is a well-defined graded Chevalley-Eilenberg $3$-cocycle of degree $-1$, 
		\ie $\delta_\mathfrak{H}z_3=0$. 
		\item The graded Chevalley-Eilenberg $3$-class
		 $c_3=c_3\big(\mathfrak{G},b,[~,~]_G\big)$ of $z_3(\phi)$ does not depend on the chosen $\phi_1,\phi_2$ satisfying eqn
		 (\ref{EqCompShiftedLInfinityOrder2}).
		\item There is a $L_\infty$-quis of order 3 between
		 $\mathfrak{G}$ and its cohomology $\mathfrak{H}$
		 if and only if $c_3=0$.
	\end{enumerate}
\end{prop}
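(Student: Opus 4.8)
The plan is to run all four parts off the single coderivation identity that follows from $\overline{D}^2=0=\overline{d}^2$, namely $\overline{D}\circ\overline{P}(\varphi)=-\overline{P}(\varphi)\circ\overline{d}$, read in successive Taylor coefficients. First I would record that, since $w_3$ depends only on $\varphi_1,\varphi_2$, the shift $w_3[1]$ is precisely the order-$3$ Taylor coefficient $P_3$ of the \emph{truncated} morphism $e^{*(\varphi_1+\varphi_2)}$, i.e. the right-hand side of (\ref{EqCompP3}) with the $b[1]\varphi_3$-term deleted. For statement~1 I project the identity to $\mathfrak{G}[1]$ and restrict to $\Sym^3(\mathfrak{H}[1])$. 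On the left, $\pr\circ\overline{D}\circ\overline{P}(\varphi)=D\circ\overline{P}(\varphi)$ with $D=b[1]+[~,~]_G[1]$; the $\Sym^1$-component of $\overline{P}(\varphi)$ produces $b[1]\circ P_3$, while the $\Sym^2$-component feeds $D_2=[~,~]_G[1]$ only terms containing $P_1$ or $P_2$. On the right, $P\circ\overline{d}$ restricted to $\Sym^3$ lands in $\Sym^2(\mathfrak{H}[1])$ and so only sees $P_2$. Because the order-$2$ hypothesis gives $P_1=0=P_2$, everything except $b[1]\circ P_3$ drops out, yielding $b\circ w_3=0$ after unshifting. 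The only care needed is the bookkeeping of Koszul signs in the coderivation-along-$\Phi$ expansion, but the vanishing itself is robust.

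For statement~2 the values of $w_3$ lie in $Z\mathfrak{G}$ by statement~1, so $z_3=\pi\circ w_3$ is well defined, and I extract the order-$4$ Taylor coefficient of the same identity. Using $P_1=P_2=0$ and $D_3=0$, the surviving terms are $b[1]\circ P_4$, the $D_2$-contractions $[\,P_3(\cdots),\varphi_1(\cdot)\,]_G$ coming from the $\Sym^2$-component of $\overline{P}(\varphi)$, and the terms of the form $P_3\big(d_2(\cdot,\cdot),\cdot,\cdot\big)$ coming from $P\circ\overline{d}$. Now I apply $\pi[1]$: since $\pi$ annihilates coboundaries the $b[1]\circ P_4$-term dies, while $\pi$ being a morphism of graded Lie algebras, together with $\pi[1]\circ\varphi_1=\mathrm{id}$ (the section property) and $\pi[1]\circ w_3[1]=z_3[1]$, turns the remaining terms into exactly the two sums of formula (\ref{EqCompGradedChevalleyEilenberg}) for $\delta_\mathfrak{H}z_3$. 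Hence $\delta_\mathfrak{H}z_3=0$. The real labour here is matching the signs of the combinatorial expansion against (\ref{EqCompGradedChevalleyEilenberg}); the conceptual content is that the order-$4$ obstruction, pushed to cohomology, \emph{is} the Chevalley--Eilenberg coboundary of $z_3$.

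For statement~3 I invoke \autoref{LP1P2Vanishing}: given a second admissible pair satisfying (\ref{EqCompShiftedLInfinityOrder2}), part~1 lets me assume both first components are sections, and part~2 regauges the second morphism so that its new data read $\psi'_1=\varphi_1$ and $\psi'_2=\varphi_2+\chi_2$ with $b[1]\circ\chi_2=0$. Because every term of (\ref{EqDefWThree}) is linear in the $\varphi_2$-slot, the difference $z_3(\varphi_1,\varphi_2+\chi_2)-z_3(\varphi_1,\varphi_2)$ is $\pi$ applied to the $w_3$-formula with $\varphi_2$ replaced by $\chi_2$; using $\pi[1]\circ\varphi_1=\mathrm{id}$, that $\pi\circ\chi_2$ is defined ($\chi_2$ being a cocycle), and that $\pi$ is a Lie morphism, this difference is exactly $\delta_\mathfrak{H}(\pi\circ\chi_2)$, a Chevalley--Eilenberg coboundary. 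The gauge covariance (\ref{EqCompGaugingP}) then shows that the auxiliary regauging maps $e^{*\alpha_1}$ and $e^{*\beta}$ alter the representing cocycle only within its class. Thus $c_3$ is well defined. This control of the regauging is the step I expect to be the main obstacle, since one must verify carefully that neither the source nor the target gauge moves the cohomology class.

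Finally, statement~4 is the bridge between ``$w_3[1]$ is a $b$-coboundary'' and ``$z_3$ is a $\delta_\mathfrak{H}$-coboundary''. If an $L_\infty$-quis of order $3$ exists, then $P_3=0$ for a suitable $\varphi_3$, so $w_3[1]=-b[1]\varphi_3\in B\mathfrak{G}[1]$ and $z_3=\pi\circ w_3=0$, whence $c_3=0$ (and by statement~3 this is the class computed from any admissible data). Conversely, if $c_3=0$ I write $z_3=\delta_\mathfrak{H}\lambda_2$; since $\pi\colon Z\mathfrak{G}\to\mathfrak{H}$ is surjective I lift $\lambda_2$ to a cocycle-valued $\chi_2$ with $\pi\circ\chi_2=\lambda_2$ and $b[1]\circ\chi_2=0$, and replace $\varphi_2$ by $\varphi_2-\chi_2$ (which leaves $P_2=0$ intact, as $b[1]\circ\chi_2=0$). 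By the computation of statement~3 the new $z_3$ vanishes as a map, so $w_3[1]$ now takes values in $B\mathfrak{G}[1]$; hence $b[1]\varphi_3=-w_3[1]$ is solvable, giving $P_3=0$ and therefore an $L_\infty$-quis of order $3$.
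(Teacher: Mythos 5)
Your proposal is correct and follows essentially the same route as the paper: parts 1 and 2 are obtained by reading the identity $\overline{D}\circ\overline{P}(\varphi)+\overline{P}(\varphi)\circ\overline{d_2}=0$ in Taylor coefficients of orders $3$ and $4$ and applying $\pi[1]$, part 3 uses the regauging of Lemma \ref{LP1P2Vanishing} to reduce to $\psi'_1=\varphi_1$, $\psi'_2=\varphi_2+\chi_2$ with $\chi_2$ cocycle-valued so that the difference of representing cocycles is the coboundary $\delta_\mathfrak{H}(\pi\circ\chi_2)$, and part 4 trades ``$w_3$ is coboundary-valued'' against ``$z_3$ is a $\delta_\mathfrak{H}$-coboundary'' exactly as in the paper. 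No changes needed.
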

\begin{proof}
	Note that the following equation is trivially satisfied
	for any linear map of degree zero $\varphi:\Sym (\mathfrak{H}[1])\to \mathfrak{G}[1]$ vanishing on the unit:
	\begin{equation}\label{EqCompDPPlusPdEqualsZero}
	     \overline{D}\circ \overline{P}(\varphi)
	      + \overline{P}(\varphi)\circ \overline{d_2}=0.
	\end{equation}
	\textbf{1.} Projecting the preceding identity
	(\ref{EqCompDPPlusPdEqualsZero}) to $\mathfrak{G}[1]$ and evaluating it
	on ${\Sym}^3(\mathfrak{H}[1])$ we derive the first statement $b\circ w_3=0$ upon using $P_1(\varphi)=0$ and $P_2(\varphi)=0$
	and applying the shift $[-1]$ to the term $P_3$ (see eqn (\ref{EqCompP3})).\\
	\textbf{2.} By the preceding part, the values of $w_3$
	are cocycles (w.r.t.~$b$) whence the application of
	$\pi$ is legal, and $z_3$ is well-defined. Evaluating
	eqn (\ref{EqCompDPPlusPdEqualsZero}) on  ${\Sym}^4(\mathfrak{H}[1])$, projecting onto 
	$\mathfrak{G}[1]$, applying $\pi[1]$, and using the shift
	$[-1]$ we get the graded $3$-cocycle equation $\delta_\mathfrak{H}z_3=0$, compare eqn
	(\ref{EqCompGradedChevalleyEilenberg}) for $k=3$.\\
	\textbf{3.} Choose a different linear map of degree
	zero $\psi$ from $\Sym (\mathfrak{H}[1])$ to $\mathfrak{G}[1]$ vanishing on the unit with $\psi_1$
	a section such that $P_1(\psi)=0$ and
	$P_2(\psi)=0$. According to the second statement of
	Lemma \ref{LP1P2Vanishing} there is a locally nilpotent
	coderivation $T$ of degree $0$ commuting with $\overline{D}$ such that for the regauged
	map $\psi'=\mathrm{pr}_{\mathfrak{G}[1]}\circ
	e^{\circ T}\circ e^{*\varphi}$ we have $\psi'_1=\varphi_1$.
	Moreover, using eqn (\ref{EqCompGaugingP}) and using
	$P_1(\psi)=0$ and $P_2(\psi)=0$ we get
	\[
	   P_3(\psi')=\mathrm{pr}_{\mathfrak{G}[1]}
	      \circ e^{\circ T}\circ 
	      \overline{P}(\psi)|_{{\Sym}^3(\mathfrak{H}[1])}
	      = P_3(\psi) + b[1]\circ \mathrm{something},
	\]
	showing that $\pi[1]\circ P_3(\psi)=\pi[1]\circ P_3(\psi')$. According to Lemma \ref{LP1P2Vanishing}
	the map $\psi'_2-\varphi_2=\chi_2$ takes its values in
	the $b[1]$-cocycles. It follows that the projection $\pi$ applied to the difference
	$w_3(\psi')-w_3(\phi)$ gives the graded $3$-coboundary
	$\delta_\mathfrak{H}(\chi_2[-1])$ because the $b$-cocycles
	form a graded Lie subalgebra, and $\pi$ is a morphism
	of graded Lie algebras. Hence modulo graded $3$-coboundaries
	the expression $c_3$ is independent on $\phi_1,\phi_2$.\\
	\textbf{4.} If there is a $L_\infty$-quis of order three
	there is a regauged one, $e^{*\varphi}$, where $\varphi_1$ is a section
	according to the proof of Lemma \ref{LP1P2Vanishing}. But
	then $P_3(\varphi)=0$, hence $z_3(\phi)=0$, and the class
	$c_3$ vanishes.\\
	Conversely, suppose the class $c_3$ vanishes. Choose any
	$L_\infty$-quis $e^{*\varphi}$ of order $2$ with $\varphi_1$
	a section, which exists by Lemma \ref{LP1P2Vanishing}.
	We can then add to $\phi_2$ a linear map $\chi_2[-1]$ taking values in the $b$-cocycles such 
	that for $\phi_2'=\phi_2+\chi_2[-1]$ the projection
	$z_3(\phi')=0$. This means that $w_3(\phi')$ is a map
	into the $b$-coboundaries, whence after a shift we get
	a linear map $\varphi'_3$ such  that $P_3(\varphi')=0$
	whence $e^{*\varphi}$ is a $L_\infty$-quis of order $3$.
\end{proof}

We shall call the above-mentioned graded cohomology $3$-class
$c_3\big(\mathfrak{G},b,[~,~]_G\big)$ the 
\emph{characteristic $3$-class of the differential graded
	Lie algebra $\big(\mathfrak{G},b,[~,~]_G\big)$}.
	It obviously is the first obstruction to $L_\infty$-formality and can be computed with any
	$\phi_1$ and $\phi_2$ satisfying eqs (\ref{EqDefSection})
	and (\ref{EqCompShiftedLInfinityOrder2}).

\section{$L_\infty$-Perturbation Lemma} \label{Sec:Perturbation-lemma}

We refer to Appendix \ref{App: The Perturbation Lemma} for
homotopy contractions and the usual Perturbation Lemma.

In case the corresponding complexes carry additional algebraic
structures it is interesting to see whether the maps
in the perturbation lemma can be modified such that these
structures are preserved. This has been done by an inductive
procedure in the
$A_\infty$ and $L_\infty$ cases, and on other operads, see e.g. \cite{Hueb10,Hueb11}, \cite[Prop A.3]{BGHHW05}, \cite{M10,DSV16}.
 We would like to present the observation that in the 
 $L_\infty$-case the usual geometric series will already
 give maps preserving the graded coalgebra structures.\\
To approach the $L_\infty$-perturbation Lemma we consider
an arbitrary homotopy contraction \eqref{DiagramHomotopyContraction}. One can pass to the
graded coderivations $\overline{b_U[1]}$ of $\Sym (U[1])$ and
$\overline{b_V[1]}$ of $\Sym (V[1])$, respectively, and 
--upon writing $\varphi_1$ for $i[1]$ and $\psi_1$ for
$p[1]$-- consider
the morphisms of graded coalgebras $e^{*\varphi_1}:
\Sym (U[1])\to \Sym(V[1])$ and 
$e^{*\psi_1}:\Sym(V[1])\to \Sym (U[1])$, respectively. By applying the corresponding projections
$\pr_{U[1]}$ and $\pr_{V[1]}$ it can be seen that $\overline{b_U[1]}$
and $\overline{b_V[1]}$ are differentials, and 
$e^{*\varphi_1}$ and $e^{*\psi_1}$ are chain maps satisfying
$e^{*\psi_1}\circ e^{*\varphi_1}=id_{\Sym (U[1])}$. 
In order to extend the chain homotopy $h$ from $V$
to $\Sym (V[1])$, the simple choice $\overline{h[1]}$ 
(as a graded coderivation and derivation) will not be
enough. Recall that $P=[h,b_V]$ is an idempotent $\korps$-linear map $V\to V$. Let $V_U$ be the kernel, and $V_\mathrm{acyc}$
be the image of $P$. Clearly $V=V_U\oplus V_\mathrm{acyc}$,
and $\Sym (V[1])\cong \Sym (V_U[1])\otimes\Sym (V_\mathrm{acyc}[1])$
as graded bialgebras. Define the $\korps$-linear map $\beta$
of degree $0$ from $\Sym (V[1])$ to $\Sym (V[1])$
for all $y_1,\ldots,y_k\in V_U[1]$ and $w_1,\ldots,w_l\in
V_\mathrm{acyc}[1]$ where $k,l\in\mathbb{N}$:

\begin{align}\label{EqDefBeta}
\beta(y_1 \bullet \dotsb \bullet y_k \bullet w_1 \bullet \dotsb \bullet w_l) & {}=
\begin{cases}
\frac{1}{l}(y_1 \bullet \dotsb \bullet y_k 
\bullet w_1 \bullet \dotsb \bullet w_l) & \text{if $l \neq 0$}, \\
0 & \text{if $l=0$},
\end{cases}
\end{align}
and set
\begin{equation}\label{EqDefHomotopyEta}
\eta =\overline{h[1]}\circ \beta =\beta \circ \overline{h[1]}.
\end{equation}
It is then easy to see that

 \begin{equation}\label{DiagramHomotopyContractionSymmetric}
  \begin{tikzpicture}[baseline=(current bounding box.183)]
  \matrix (m) [matrix of math nodes,nodes in empty cells,column sep=1em,text height=1.5ex,text depth=0.25ex]
  {\big(\Sym (U[1]),\overline{b_U[1]}\big) 
  	& & \big(\Sym (V[1]),\overline{b_V[1]}\big)  & \\};
  \path[right hook->]
  ([yshift=-5pt]m-1-1.north east) edge node [above] 
      {$e^{*\varphi_1}$} ([yshift=-5pt]m-1-3.north west);
  \path[->>]
  ([yshift=5pt]m-1-3.south west) edge node [below] 
      {$e^{*\psi_1}$} ([yshift=5pt]m-1-1.south east);
  \draw[->] (m-1-4.north) arc (120:-120:2ex);
  \draw (m-1-4) ++(2em,0em) node {$\eta$};
  \end{tikzpicture}
\end{equation}  
is a homotopy contraction. Now the following Theorem is
quite useful since it allows to transfer $L_\infty$-structures
via homotopy contraction from $V$ to $U$.

\begin{theorem} \label{Thm:PertBordemann}
	Let $(U,b_U)$ and $(V,b_V)$ be two chain complexes. Suppose that there is a homotopy contraction \eqref{DiagramHomotopyContraction}. In the corresponding
	shifted symmetric algebra version \eqref{DiagramHomotopyContractionSymmetric}, suppose
	that there is a $\korps$-linear map $D'_V=\sum_{k\geqslant 2}D'_k:
	\Sym (V[1])\to V[1]$ of degree $1$ such that 
	$D=b_V[1]+D'_V$ defines an
	$L_\infty$-structure, \ie the coderivation
	$\overline{b_V[1]+D'_V}$ is a differential whence
	the coderivation
	$\delta_{\Sym (V[1])}=\overline{D'_V}$ is a perturbation of
	$\overline{b_V[1]}$.
	
	Then the $\korps$-linear maps 
	$\widetilde{e^{*\varphi_1}}$,
	$\widetilde{e^{*\psi_1}}$,
	$\delta_{\Sym (U[1])}$, and $\widetilde{\eta}$
	of the Perturbation Lemma \ref{Lem:Perturbation} --
	which define a homotopy contraction between complexes,
	\begin{equation*}
	\begin{tikzpicture}[baseline=(current bounding box.center)]
	\matrix (m) [matrix of math nodes,nodes in empty cells,column sep=1em,text height=1.5ex,text depth=0.25ex]
	{\big(\Sym(U[1]),\overline{b_U[1]}
		+\delta_{\Sym (U[1])} \big) & & \big( \Sym (V[1]),\overline{b_V[1]}+\overline{D'_V}\big) & \\};
	\path[right hook->]
	([yshift=-5pt]m-1-1.north east) edge node [above] {$\widetilde{e^{*\varphi_1}}$} ([yshift=-5pt]m-1-3.north west);
	\path[->>]
	([yshift=5pt]m-1-3.south west) edge node [below] {$\widetilde{e^{*\psi_1}}$} ([yshift=5pt]m-1-1.south east);
	\draw[->] (m-1-4.north) arc (120:-120:2ex);
	\draw (m-1-4) ++(2em,0em) node {$\widetilde{\eta}$};
	\end{tikzpicture}
	\end{equation*}
	will \textbf{automatically} preserve the structure of
	graded connected coalgebras, \ie $\widetilde{e^{*\varphi_1}}$ and 
	$\widetilde{e^{*\psi_1}}$ are morphism of graded differential connected coalgebras, and 
	$\delta_{\Sym (U[1])}$ will be a graded coderivation
	of degree $1$.\\
	More explicitly, defining the $\korps$-linear maps of 
	degree $0$, $\varphi=\varphi_1+\sum_{k \geqslant 2}\varphi_k:  
	\Sym (U[1])\to V[1]$ and
	$\psi=\psi_1+\sum_{k \geqslant 2}\psi_k:
	\Sym (V[1])\to U[1]$ and the $\korps$-linear map
	$d'_U=\sum_{k \geqslant 2}d'_k:
	\Sym (U[1])\to U[1]$ of degree $1$ by
	\begin{subequations}
	\begin{gather}
	 \varphi =\pr_{V[1]}\circ 
	 (id_{\Sym(V[1])} + \eta \circ\overline{D'_V})^{-1}
	 \circ \varphi_1,\label{EqDefLInfinityPertLemmaInjection}\\
	 \psi =\psi_1\circ \pr_{V[1]}\circ 
	 (id_{\Sym(V[1])} + \overline{D'_V}\circ \eta)^{-1},
	    \label{EqDefLInfinityPertLemmaProjection} \\
	 d'_U=\psi_1\circ \pr_{V[1]}\circ (id_{\Sym(V[1])} + \overline{D'_V} \circ \eta)^{-1} \circ \overline{D'_V} \circ e^{*\varphi_1}\label{EqDefInducedDifferentialHPTLInf}
	\end{gather}
	\end{subequations}
	we get
	\begin{subequations}
	\begin{gather}
	e^{*\varphi}
	= \widetilde{e^{*\varphi_1}} =
	\big(id_{\Sym(V[1])} + \eta \circ \overline{D'_V}\big)^{-1}
	\circ e^{*\varphi_1}, \\
	e^{*\psi}
	= \widetilde{e^{*\psi_1}} =
	e^{*\psi_1} \circ 
	\big(id_{\Sym(V[1])} 
	   + \overline{D'_V} \circ \eta\big)^{-1}, \\
	\overline{d'_U}=
		\delta_{\Sym (U[1])}=   e^{*\psi_1} 
	\circ (id_{\Sym(V[1])} + \overline{D'_V} \circ \eta)^{-1} \circ \overline{D'_V} \circ e^{*\varphi_1},
	\intertext{and of course the perturbed
	chain homotopy}
	\widetilde{\eta} = 
	(id_{\Sym(V[1])} +  \eta \circ \overline{D'_V})^{-1} \circ \eta.
	\end{gather}
	\end{subequations}
	This entails in particular that $e^{*\varphi}$ is a $L_\infty$-quasi-isomorphism with quasi-inverse $e^{*\psi}$.	
\end{theorem}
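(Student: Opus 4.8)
The plan is to reduce everything to the ordinary Perturbation Lemma \ref{Lem:Perturbation} applied to the symmetric contraction \eqref{DiagramHomotopyContractionSymmetric}, and then to verify \emph{separately} that each of the four resulting maps automatically respects the graded connected coalgebra structure. First I would check the hypotheses of Lemma \ref{Lem:Perturbation}. Since $D'_V=\sum_{k\geqslant 2}D'_k$ starts in arity $2$, the coderivation $\overline{D'_V}$ strictly decreases the word-length grading of $\Sym(V[1])$, so it and the composites $\eta\circ\overline{D'_V}$ and $\overline{D'_V}\circ\eta$ are of negative filtration degree; hence the Neumann series $(id+\eta\circ\overline{D'_V})^{-1}=\sum_{r\geqslant 0}(-\eta\circ\overline{D'_V})^{\circ r}$ and its variants converge by completeness of the filtered $\Hom$-spaces (Appendix \ref{App: Filtered vector spaces}). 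The assumption that $\overline{b_V[1]+D'_V}$ is a differential is exactly the statement that $\overline{D'_V}$ is an admissible perturbation of $\overline{b_V[1]}$, so Lemma \ref{Lem:Perturbation} directly produces the four maps $\widetilde{e^{*\varphi_1}}$, $\widetilde{e^{*\psi_1}}$, $\delta_{\Sym(U[1])}$, $\widetilde{\eta}$, the fact that they assemble into a homotopy contraction of the perturbed complexes, and the displayed geometric-series formulas for them.

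The heart of the proof is the \textbf{automatic} preservation of the coalgebra structure, and this is where I expect the main difficulty. The strategy is to prove comultiplicativity order by order along the filtration, using three ingredients: $e^{*\varphi_1}$ and $e^{*\psi_1}$ are morphisms of connected coalgebras; $\overline{D'_V}$ is a coderivation along the identity, i.e.\ $\Delta_{sh}\circ\overline{D'_V}=(\overline{D'_V}\otimes id+id\otimes\overline{D'_V})\circ\Delta_{sh}$ with the Koszul sign; and a comultiplicativity identity for the perturbed homotopy $\eta$. This last identity is the crux: because of the normalisation by $1/l$ in the definition \eqref{EqDefBeta} of $\beta$, the map $\eta$ satisfies a controlled Leibniz-type rule with respect to $\Delta_{sh}$ relative to the splitting $\Sym(V[1])\cong\Sym(V_U[1])\otimes\Sym(V_\mathrm{acyc}[1])$, and it is exactly this rule --- rather than $\eta$ being a coderivation, which it is not --- that makes the geometric series comultiplicative. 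Feeding these into the fixed-point form $\widetilde{e^{*\varphi_1}}=e^{*\varphi_1}-\eta\circ\overline{D'_V}\circ\widetilde{e^{*\varphi_1}}$ and inducting on filtration degree, I would show $\Delta_{sh}\circ\widetilde{e^{*\varphi_1}}=(\widetilde{e^{*\varphi_1}}\otimes\widetilde{e^{*\varphi_1}})\circ\Delta_{sh}$, so that $\widetilde{e^{*\varphi_1}}$ is a morphism of connected coalgebras; the analogous computations handle $\widetilde{e^{*\psi_1}}$ and show that $\delta_{\Sym(U[1])}$ is a graded coderivation of degree $1$.

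Once comultiplicativity is established, cofreeness of $\Sym(V[1])$ and $\Sym(U[1])$ settles the structural part: the connected coalgebra morphism $\widetilde{e^{*\varphi_1}}$ is uniquely of the form $e^{*\varphi}$ with $\varphi=\pr_{V[1]}\circ\widetilde{e^{*\varphi_1}}$, which is formula \eqref{EqDefLInfinityPertLemmaInjection}; likewise $\widetilde{e^{*\psi_1}}=e^{*\psi}$ with \eqref{EqDefLInfinityPertLemmaProjection}, and $\delta_{\Sym(U[1])}=\overline{d'_U}$ with Taylor coefficient \eqref{EqDefInducedDifferentialHPTLInf}. For the final $L_\infty$-assertion, the Perturbation Lemma guarantees that $\widetilde{e^{*\varphi_1}}$ and $\widetilde{e^{*\psi_1}}$ are chain maps intertwining the perturbed codifferentials $\overline{b_U[1]+d'_U}$ and $\overline{b_V[1]+D'_V}$, which are precisely the transported and the given $L_\infty$-codifferentials; combined with the previous step this says exactly that $e^{*\varphi}$ and $e^{*\psi}$ are $L_\infty$-morphisms in the sense of \eqref{EqDefLinfinityMorphism}. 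Their first Taylor coefficients $\varphi_1=i[1]$ and $\psi_1=p[1]$ come from the contraction \eqref{DiagramHomotopyContraction} of $(U,b_U)$ and $(V,b_V)$ and therefore induce mutually inverse isomorphisms on $b$-cohomology, whence $e^{*\varphi}$ is an $L_\infty$-quasi-isomorphism with quasi-inverse $e^{*\psi}$, as claimed. The single genuinely hard step is the comultiplicativity identity for $\eta$ and its propagation through the geometric series; this is the computational content deferred to \cite{BE18}.
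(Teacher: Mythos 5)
The paper itself gives no proof of Theorem \ref{Thm:PertBordemann}: immediately after the statement it says only ``A proof of this is given in \cite{BE18}.'' So there is no in-paper argument to compare your proposal against. With that caveat, your outline is structurally the right one, and it correctly isolates the two easy layers (the ordinary Perturbation Lemma \ref{Lem:Perturbation} applies because $\overline{D'_V}$ strictly lowers word length and the relevant $\Hom$-spaces are complete; once comultiplicativity is known, cofreeness of $\Sym(U[1])$ and $\Sym(V[1])$ forces the maps to be of the form $e^{*\varphi}$, $e^{*\psi}$, $\overline{d'_U}$ with the stated Taylor coefficients, and the quasi-isomorphism claim follows since $\varphi|_{U[1]}=i[1]$ and $\psi|_{V[1]}=p[1]$ because $\overline{D'_V}$ vanishes on $\Sym[1]$). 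You also correctly identify the single nontrivial point: $\eta=\overline{h[1]}\circ\beta$ is \emph{not} a coderivation, and the whole theorem hinges on the specific $1/l$ weighting in \eqref{EqDefBeta} making the geometric series $(id+\eta\circ\overline{D'_V})^{-1}$ comultiplicative.

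The gap is that this crux is only named, never established. You assert that $\eta$ ``satisfies a controlled Leibniz-type rule with respect to $\Delta_{sh}$ relative to the splitting $\Sym(V_U[1])\otimes\Sym(V_{\mathrm{acyc}}[1])$'' and that this rule ``propagates through the geometric series,'' but you neither state the identity precisely nor indicate why the inductive step of your filtration argument closes. Note that the naive candidate $\Delta_{sh}\circ\eta=(\eta\otimes id+id\otimes\eta)\circ\Delta_{sh}$ is false (that would make $\eta$ a coderivation, and then $\overline{h[1]}$ alone would already work, which the paper explicitly says is not enough); the correct identity must involve the interplay between $\eta$, the projector $P=[h,b_V]$, and the bigrading by the number of $V_{\mathrm{acyc}}$-letters, and checking that it is compatible with precomposition by the coderivation $\overline{D'_V}$ at every order is exactly the computation the authors defer to \cite{BE18}. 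As it stands your text is a faithful plan of that deferred proof rather than a proof: the one genuinely hard step is flagged but not carried out.
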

\noindent A proof of this is given in \cite{BE18}.

\noindent The preceding Theorem \ref{Thm:PertBordemann} has the following well-known corollary:
\begin{cor}
	Suppose that in the homotopy contraction (\ref{DiagramHomotopyContraction}) the differential
	$b_U=0$ whence $U$ is isomorphic to the cohomology of
	$(V,b_V)$. Then under the hypothesis of the preceding
	Theorem \ref{Thm:PertBordemann} we have the following:
	\begin{enumerate}
		\item There is a minimal $L_\infty$-structure $(U,d)$ 
		 quasi-isomorphic to $(V,b_V[1]+D'_V)$. \\
		 In case
		 the latter $L_\infty$-structure comes from the
		 structure of a differential graded Lie algebra,
		 $(V,b,[~,~])$, then the term $d_2$ is isomorphic
		 to the shift of the induced Lie bracket $[~,~]_H$ on cohomology, $[~,~]_H[1]$.
    \item Suppose there is another homotopy contraction 
	\begin{equation*}
	\begin{tikzpicture}[baseline=(current bounding box.184)]
	\matrix (m) [matrix of math nodes,nodes in empty cells,column sep=1em,text height=1.5ex,text depth=0.25ex]
	{(U',0) & & (V,b_V) & \\};
	\path[right hook->]
	([yshift=-5pt]m-1-1.north east) edge node [above] {$i'$} ([yshift=-5pt]m-1-3.north west);
	\path[->>]
	([yshift=5pt]m-1-3.south west) edge node [below] {$p'$} ([yshift=5pt]m-1-1.south east);
	\draw[->] (m-1-4.north) arc (120:-120:2ex);
	\draw (m-1-4) ++(2em,0em) node {$h'$};
	\end{tikzpicture}.
	\end{equation*}
	Then, writing $\varphi'_1=i'[1]$, $\psi'_1=p'[1]$ under the hypothesis of the preceding Theorem \ref{Thm:PertBordemann} the two $L_\infty$ structures $(U,d)$ and $(U',d')$ are conjugated, \ie there is an isomorphism of coalgebras 
	$e^{*\chi} : \Sym(U[1]) \to \Sym(U'[1])$ such that
	\begin{equation}
	\overline{d'} = e^{*\chi}\circ \overline{d} \circ \left(e^{*\chi}\right)^{-1}.
	\end{equation}
	\end{enumerate}
\end{cor}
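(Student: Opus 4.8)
The plan is to deduce both statements almost directly from Theorem~\ref{Thm:PertBordemann}, the only genuine computation being the identification of the leading bracket $d_2$ in part~1. First, for part~1, I would specialise the conclusion of the Theorem to $b_U=0$. The transferred codifferential on $\Sym (U[1])$ is $\overline{b_U[1]}+\delta_{\Sym (U[1])}=\overline{d'_U}$ with $d'_U=\sum_{k\geqslant 2}d'_k$ given by \eqref{EqDefInducedDifferentialHPTLInf}; since its arity-$1$ Taylor coefficient is $b_U[1]=0$, the structure $(U,d)$ with $d=d'_U$ is automatically minimal. The quasi-isomorphism claim is then verbatim the last sentence of Theorem~\ref{Thm:PertBordemann}: $e^{*\varphi}$ is an $L_\infty$-quasi-isomorphism $(U,\overline d)\to\big(V,\overline{b_V[1]+D'_V}\big)$ with quasi-inverse $e^{*\psi}$.

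Next I would read off the leading term $d_2$ in the differential-graded-Lie case, where $D'_V=[~,~][1]$ reduces to its arity-$2$ component $D'_2$. Restricting \eqref{EqDefInducedDifferentialHPTLInf} to $\Sym^2(U[1])$ and tracking arities, I observe that $\eta$ preserves arity while $\overline{D'_2}$ lowers it by one and annihilates $V[1]$; hence on an element $x_1\bullet x_2$ the geometric series $(id+\overline{D'_V}\circ\eta)^{-1}$ contributes only its identity term, leaving $d_2(x_1\bullet x_2)=\psi_1\big(D'_2(\varphi_1(x_1)\bullet\varphi_1(x_2))\big)$. Since $\varphi_1=i[1]$ is the inclusion of cocycle representatives and $\psi_1=p[1]$ the projection to cohomology, unshifting identifies this with $p\big([i(\cdot),i(\cdot)]\big)$, that is, with $[~,~]_H[1]$ under the isomorphism $U\cong\mathfrak H$ induced by $p$.

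For part~2, the idea is that $(U,d)$ and $(U',d')$ are both minimal $L_\infty$-algebras quasi-isomorphic to the same $\big(V,\overline{b_V[1]+D'_V}\big)$, so I would compose the quasi-isomorphism coming from one contraction with the quasi-inverse coming from the other. Concretely, Theorem~\ref{Thm:PertBordemann} supplies $L_\infty$-quasi-isomorphisms $e^{*\varphi}\colon(U,\overline d)\to\big(V,\cdots\big)$ and $e^{*\psi'}\colon\big(V,\cdots\big)\to(U',\overline{d'})$; their composite is again an $L_\infty$-morphism, hence of the form $e^{*\chi}\colon\Sym (U[1])\to\Sym (U'[1])$ with first component $\chi_1=\psi'_1\circ\varphi_1=(p'\circ i)[1]$. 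Because $i$ and $p'$ are quasi-isomorphisms and the differentials on $U$ and $U'$ vanish, $p'\circ i$ induces an isomorphism $U=H(U)\to H(V)\to H(U')=U'$ which, the differentials being zero, is the map itself; thus $\chi_1$ is bijective, and Lemma~\ref{Lem:MorphCoInv} upgrades $e^{*\chi}$ to an isomorphism of graded connected coalgebras. As an $L_\infty$-morphism $e^{*\chi}$ intertwines the codifferentials via \eqref{EqDefLinfinityMorphism}, so conjugating yields $\overline{d'}=e^{*\chi}\circ\overline d\circ\big(e^{*\chi}\big)^{-1}$, as required.

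The one point demanding care—rather than a real obstacle—is the shift-sign and arity bookkeeping in the leading-term computation of part~1, to confirm that no higher terms of the series leak into $d_2$. Everything else is formal and rests solely on the principle that an $L_\infty$-quasi-isomorphism between two minimal structures is automatically a strict isomorphism on first components, which is exactly what makes Lemma~\ref{Lem:MorphCoInv} applicable.
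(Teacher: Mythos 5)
Your proposal is correct and follows essentially the same route as the paper: part~1 is read off from Theorem~\ref{Thm:PertBordemann} with the leading term $d_2=\psi_1\circ D'_2\circ(\varphi_1\bullet\varphi_1)$ extracted from eqn~(\ref{EqDefInducedDifferentialHPTLInf}) exactly as in the paper's proof, and part~2 is the same composition $e^{*\chi}=e^{*\psi'}\circ e^{*\varphi}$ made invertible via Lemma~\ref{Lem:MorphCoInv} and conjugated using the intertwining relation. No substantive differences.
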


\begin{proof}
	\begin{enumerate}
	\item The first statement is immediate from the
	preceding Theorem \ref{Thm:PertBordemann}.
	For the second, according to equation (\ref{EqDefInducedDifferentialHPTLInf}) we have
	$d'_2=\psi_1\circ D'_2\circ (\varphi_1 \bullet \varphi_1)$,
	and for $D'_2=[~,~][1]$ it is clear that
	this is thus the shifted induced bracket on cohomology 
	with isomorphism given by $\varphi_1$.\\
	\item
	Using restrictions, since $e^{*\psi'}|_{V[1]} = \psi'_1$  induces an isomorphism $V[1] \to U'[1]$ and 
	$e^{*\varphi}|_{U[1]} = \varphi_1$ induces an isomorphism $U[1] \to V[1]$, we have that $(e^{*\psi'} \circ e^{*\varphi})|_{V[1]} = \psi'_1 \circ \varphi_1$ is an isomorphism $U[1] \to U'[1]$, which implies that
	the morphism of graded connected coalgebras 
	$e^{*\chi}\coloneqq e^{*\psi'} \circ e^{*\varphi}$ is invertible using \autoref{Lem:MorphCoInv} above.
	We have
	\(
	e^{*\psi'} \circ e^{*\varphi} \circ \overline{d} = e^{*\psi'} \circ \overline{D} \circ e^{*\varphi} = \overline{d'} \circ e^{*\psi'} \circ e^{*\varphi}
	\)
	proving the Corollary.
	\end{enumerate}
\end{proof}

\noindent We get the following $L_\infty$-analogue of Remark
\ref{RemarksContraction}, \ref{RemSubcomplexLeadsToContraction}
which is quite useful:
\begin{cor}\label{CorLInfinityMapwillBeQuis}
	Let $(U,b_U)$ and $(V,b_V)$ be complexes and suppose there
	is a homotopy contraction 
	(\ref{DiagramHomotopyContraction}). Let furthermore
	$(U,b_U[1]+D_U)$ and $(V,b_V[1]+D_V)$ be 
	$L_\infty$-structures and 
	\[
	\Phi:
	\big(\Sym (U[1]),\overline{b_U[1]+D_U}\big)\to 
	\big(\Sym (V[1]),\overline{b_V[1]+D_V}\big)
	\]
	 an 
	$L_\infty$-map such that (writing $\psi'_1=p[1]$ and 
	$\varphi'_1=i[1]$)
	\[
	     \psi'_1\circ \Phi|_{U[1]}:U[1]\to U[1]
	\]
	is invertible.\\
	Then $\Phi$ is a $L_\infty$-quasi-isomorphism.
	Moreover, if both $L_\infty$-structures come from 
	differential graded Lie algebra structures on $U$ and
	$V$, respectively, then the corresponding graded Lie structures on the cohomologies of $U$ and $V$ with
	respect to $b_U$ and $b_V$, respectively, are isomorphic.
\end{cor}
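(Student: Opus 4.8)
The plan is to establish the two assertions of \autoref{CorLInfinityMapwillBeQuis} separately, deriving the first (that $\Phi$ is an $L_\infty$-quasi-isomorphism) essentially from unwinding the definitions, and then deducing the second (isomorphism of the induced graded Lie brackets) by identifying the relevant maps on cohomology. First I would recall that, by definition, $\Phi$ is an $L_\infty$-quasi-isomorphism precisely when its first Taylor coefficient $\Phi|_{U[1]}=\varphi_1:U[1]\to V[1]$ is a chain map $(U[1],b_U[1])\to(V[1],b_V[1])$ inducing an isomorphism on cohomology. The chain-map property is automatic since $\Phi$ is a morphism of differential graded connected coalgebras, so projecting the intertwining relation $\Phi\circ\overline{b_U[1]+D_U}=\overline{b_V[1]+D_V}\circ\Phi$ onto $V[1]$ and restricting to $U[1]$ yields $b_V[1]\circ\varphi_1=\varphi_1\circ b_U[1]$ (the higher-arity terms $D_U,D_V$ contribute nothing in arity $1$). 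It remains to see that $\varphi_1$ is a quasi-isomorphism of complexes.

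For that I would exploit the given homotopy contraction \eqref{DiagramHomotopyContraction}: the maps $i=\varphi'_1[-1]$ and $p=\psi'_1[-1]$ are themselves quasi-isomorphisms between $(U,b_U)$ and $(V,b_V)$ with $p\circ i$ homotopic to $\mathrm{id}_U$, so on cohomology $[p]$ and $[i]$ are mutually inverse isomorphisms. The key input is the hypothesis that $\psi'_1\circ\Phi|_{U[1]}=\psi'_1\circ\varphi_1:U[1]\to U[1]$ is invertible. Passing to cohomology, this composite induces an isomorphism $H(U[1])\to H(U[1])$; but on cohomology it factors as $[\psi'_1]\circ[\varphi_1]$, and since $[\psi'_1]$ is already an isomorphism (it is the shift of $[p]$), it follows that $[\varphi_1]$ must itself be an isomorphism on cohomology. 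Hence $\varphi_1$ is a quasi-isomorphism and $\Phi$ is an $L_\infty$-quasi-isomorphism, proving the first claim.

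For the second claim, suppose both $L_\infty$-structures arise from differential graded Lie algebra structures $(U,b_U,[~,~]_U)$ and $(V,b_V,[~,~]_V)$, so that the arity-two coefficients are $D_U|_{\mathrm{arity}\,2}=[~,~]_U[1]$ and $D_V|_{\mathrm{arity}\,2}=[~,~]_V[1]$. I would read off from the arity-two component of the intertwining relation \eqref{EqDefLinfinityMorphism} the identity relating $\varphi_1$, $\varphi_2$, the two brackets, and the differentials; this is exactly the shifted order-$2$ relation \eqref{EqCompShiftedLInfinityOrder2} from \autoref{LP1P2Vanishing}. Projecting this identity to cohomology kills the $b$-exact correction terms (those involving $\varphi_2$ and the differentials), leaving the statement that the cohomology map induced by $\varphi_1$ intertwines the induced graded Lie brackets $[~,~]_{H_U}$ and $[~,~]_{H_V}$. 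Since by the first part this induced map is already a linear isomorphism, it is an isomorphism of graded Lie algebras, which is the desired conclusion.

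The step I expect to be the main obstacle is the bookkeeping in the second claim: one must be careful that $\varphi_1$ genuinely descends to a well-defined map on $b$-cohomology and that, after projecting the arity-two relation via $\pi$, the terms $b\big(\varphi_2(\cdots)\big)$ and $\varphi_2\big(b(\cdots)\big)$ vanish on cocycles, so that the surviving equation is precisely bracket-compatibility. This is where the Koszul signs and the shift conventions of \eqref{EqDefShiftJPlusJPrimeAndCirc}--\eqref{EqDefShiftTensorProduct} must be handled correctly; conceptually it is routine, but it is the only place where a sign or a degree slip could break the argument.
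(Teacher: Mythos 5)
Your argument is correct, but it diverges from the paper's proof in the first part in a way worth noting. For the claim that $\varphi_1=\Phi|_{U[1]}$ is a quasi-isomorphism, you pass directly to cohomology: since $A=\psi'_1\circ\varphi_1$ is an invertible chain map it induces an isomorphism $H(U[1])\to H(U[1])$, and since $[\psi'_1]$ is invertible (being the shift of $[p]$, with $[p]$ and $[i]$ mutually inverse by the contraction identities), the two-out-of-three principle forces $[\varphi_1]$ to be an isomorphism. The paper instead stays at the chain level: it sets $\hat{\psi}_1=A^{-1}\circ\psi'_1$, checks $\hat{\psi}_1\circ\varphi_1=\mathrm{id}_{U[1]}$, and uses \eqref{contraction_eq2} to exhibit $\mathrm{id}_{V[1]}-\varphi_1\circ\hat{\psi}_1$ as the commutator $\bigl[h[1]\circ(\mathrm{id}_{V[1]}-\varphi_1\circ\hat{\psi}_1),b_V[1]\bigr]$, thereby producing a \emph{new} homotopy contraction with inclusion $\varphi_1$. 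Your route is shorter and suffices for the statement as given; the paper's route yields strictly more data (an explicit contraction adapted to $\varphi_1$, which could be fed back into the perturbation machinery), but none of that extra structure is exported elsewhere. For the second claim your argument is essentially the paper's: both extract the arity-$2$ component of \eqref{EqDefLinfinityMorphism}, evaluate on cocycles, and observe that $\varphi_1$ preserves brackets up to coboundaries. The paper additionally invokes a quasi-inverse $\Psi$ and derives the symmetric relation for $\psi_1$; as you implicitly note, this is not needed, since a bijective morphism of graded Lie algebras automatically has a Lie-algebra inverse, so once $[\varphi_1]$ is known to be a bijective bracket-preserving map the conclusion follows. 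Your worry about the bookkeeping in the second part is unfounded at the level of substance: on cocycles the terms $\varphi_2\bigl(d_U(y_1)\bullet y_2\pm y_1\bullet d_U(y_2)\bigr)$ vanish outright and $d_V\bigl(\varphi_2(y_1\bullet y_2)\bigr)$ is exact, so the surviving identity is exactly bracket-compatibility modulo coboundaries, independent of the precise Koszul signs.
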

\begin{proof}
	Write $\Phi|_{U[1]} \eqqcolon \varphi_1 : U[1] \to V[1]$, 
	$A:=\psi'_1\circ \varphi_1$ the invertible $\korps$-linear
	map which clearly is a chain map
	$(U[1],b_U[1])\to(U[1],b_U[1])$. Set $\hat{\psi}_1:=A^{-1}\circ \psi'_1$.
	Then $\hat{\psi}_1$ is a chain map 
	$(V[1],b_V[1])\to(U[1],b_U[1])$ and clearly
	\[
	     \hat{\psi}_1\circ \varphi_1 
	     =A^{-1}\circ \psi'_1\circ \varphi_1 = id_{U[1]}.
	\]
	On the other hand $\varphi_1\circ \hat{\psi}_1$ commutes
	with $b_V[1]$, and thanks to eqn 
	(\ref{contraction_eq2}) we get
	\begin{align*}
	 \varphi_1\circ \hat{\psi}_1 
	 = &\big(\varphi'_1\circ \psi'_1 + \big[h[1],b_V[1]\big]\big)
	     \circ \varphi_1\circ \hat{\psi}_1 \\
	 = & \varphi'_1\circ \psi'_1
	     + \big[h[1]\circ \varphi_1\circ \hat{\psi}_1 ,b_V[1]\big]\\
	 = & id_{V[1]} -\big[h[1]\circ (id_{V[1]}-\varphi_1\circ \hat{\psi}_1) ,b_V[1]\big],
	\end{align*}
	whence the $\korps$-linear maps $\hat{\psi}_1,\varphi_1,
	h'[1]\coloneqq h[1]\circ (id_{V[1]}-\varphi_1\circ \hat{\psi}_1) $
	define a homotopy contraction (\ref{DiagramHomotopyContraction}) for the complexes
	$(U[1],b_U[1])$ and $(V[1],b_V[1])$, the check of the side conditions for $h'[1]$ being straight-forward.
	In particular, $\varphi_1$ induces an isomorphism in 
	cohomology whence it is a $L_\infty$-quasi-isomorphism.\\
	For the second statement let $D_U=D_2=[~,~][1]$
	and $D_V=D'_2=[~,~]'[1]$ where $[~,~]$ and $[~,~]'$
	denote the graded Lie brackets on $U$ and $V$, respectively.
	Pick a quasi-inverse $\Psi:\Sym(V[1])\to \Sym(U[1])$
	which exists, see e.g. \cite[Thm. V1, V2]{AMM02}, then 
	the fact that both $\Phi= e^{*\varphi}$ and 
	$\Psi=e^{*\psi}$ are chain maps read when evaluated on
	two elements and projected to the primitive space:
	for all $y_1,y_2\in U[1]$ and $z_1,z_2\in V[1]$ we get
	with $d_U=b_U[1]$ and $d_V=b_V[1]$
	\begin{gather}
	  \varphi_2\big(
	   d_U(y_1)\bullet y_2+(-1)^{|y_1|}y_1\bullet d_U(y_2)\big)
	      + \varphi_1\big(D_2(y_1\bullet y_2)\big)\nonumber\\
	      = d_V\big(\varphi_2(y_1\bullet y_2)\big)
	      + D'_2\big(\varphi_1(y_1)\bullet\varphi_1(y_2)\big)
	\end{gather}
	and 
	\begin{gather}
	\psi_2\big(
	d_V(z_1)\bullet z_2+(-1)^{|z_1|}z_1\bullet d_U(z_2)\big)
	+ \psi_1\big(D'_2(z_1\bullet z_2)\big)\nonumber\\
	= d_U\big(\psi_2(z_1\bullet z_2)\big)
	+ D_2\big(\psi_1(z_1)\bullet\psi_1(z_2)\big).
	\end{gather}
    Hence if $y_1,y_2,z_1,z_2$ are cocycles, it follows that
    $\varphi_1$ and its inverse $\psi_1$ preserve Lie brackets
    up to coboundaries, and upon projecting onto the corresponding cohomology, we get the desired isomorphism
    of graded Lie brackets.
\end{proof}

For later use, in the case $b_U=0$ we give the shifted formula for the linear map $\phi_2=\varphi_2[-1]$ (of degree $-1$)
from
$\Lambda^2\mathfrak{H}$ to $\mathfrak{G}$ in case of a
differential graded Lie algebra
 $\big(\mathfrak{G},b,[~,~]_G\big)$ 
 (whence $D=b[1]+[~,~]_G[1]$), \ie
 \begin{equation}\label{EqCompPhiTwoPertShifted}
    \phi_2(y_1,y_2)=-h\big([\phi_1(y_1),\phi_1(y_2)]_G\big).
 \end{equation}
 which can be used to compute the characteristic $3$-class
 $c_3$ from $w_3$, see eqn (\ref{EqDefWThree}).

\section{Finite-dimensional Lie algebras} \label{Sec:Finite-dim-Lie-alg}

In this Section we shall consider different examples of finite-dimensional (trivially graded)
Lie algebras $\big(\mathfrak{g}, [~,~]\big)$, and study the formality of the Hochschild complex $\Hochcochains(\UEA\mathfrak{g},\UEA\mathfrak{g})$ of their universal enveloping algebra $\UEA\mathfrak{g}$. 

Recall the well-known \emph{Chevalley-Eilenberg complex} $\big(\ChEcochains(\mathfrak{g},\Sym\mathfrak{g}),
\delta\big)$ of the Lie-algebra $\mathfrak{g}$ taking values
in the symmetric algebra $\Sym \mathfrak{g}$ seen as a
$\mathfrak{g}$-module via the adjoint representation:
Since $\mathfrak{g}$ is finite-dimensional it is canonically
isomorphic to the tensor product $\Sym\mathfrak{g}\otimes \Lambda\mathfrak{g}^*$ and is $\mathbb{Z}$-graded by the
`Grassmann degree', i.e. the form degree of the second factor $\Lambda\mathfrak{g}^*$.
With this grading, it clearly is a graded commutative algebra
by means of the tensor product of the commutative multiplication in $\Sym\mathfrak{g}$ and the usual exterior
multiplication in $\Lambda \mathfrak{g}^*$ which we shall also
denote by $\wedge$.
Considering an element $f\in\Sym \mathfrak{g}$ as a polynomial function on the dual space $\mathfrak{g}^*$ we can consider
$\ChEcochains(\mathfrak{g},\Sym\mathfrak{g})$ 
as the space of all
polynomial poly-vector-fields on $\mathfrak{g}^*$ equipped with
the usual \emph{Schouten bracket} $[~,~]_S$, see e.g.
\cite[eq.(5.1)]{BM08} or \cite{BMP05}; we recall the definition: let $e_1,\ldots,e_n$
be a fixed basis of $\mathfrak{g}$, let $\epsilon^1,\ldots,
\epsilon^n$
denote the dual basis of $\mathfrak{g}^*$, and recall the
\emph{structure constants} of the Lie algebra $\mathfrak{g}$, $c^i_{jk}=\epsilon^i\big([e_j,e_k]\big)\in\korps$. Then each $x\in\mathfrak{g}^*$ can be written as a sum
$x=\sum_{i=1}^nx_i\epsilon^i$. 
For each $\xi\in\mathfrak{g}$ we have the usual interior product
graded derivation 
$\iota_\xi:\Lambda \mathfrak{g}^*\to \Lambda \mathfrak{g}^*$,
and for each $y\in\mathfrak{g}^*$ we have the corresponding
derivation $\iota_y:\Sym\mathfrak{g}\to\Sym\mathfrak{g}$.
For a dual basis vector $e^i$ we shall sometimes write the more suggestive
way $\iota_{\epsilon^i}(f)=\partial^i f$ for each $f\in \Sym\mathfrak{g}$. We extend these derivations to the tensor product $\ChEcochains(\mathfrak{g},\Sym\mathfrak{g})$ in the obvious way and write $\wedge$ for the combined multiplication.
With these conventions the Schouten bracket  
of two elements $F,G\in   \ChEcochains(\mathfrak{g},\Sym\mathfrak{g})[1]$ reads
\begin{equation}\label{EqDefSchoutenBracketFinDim}
  [F,G]_s =\sum_{i=1}^n \iota_{e_i}(F)\wedge \partial^iG
      -(-1)^{(|F|-1)(|G|-1)}
         \sum_{i=1}^n \iota_{e_i}(G)\wedge\partial^iF
\end{equation}
where the degree $|F|$ is the original unshifted `Grassmann degree' to which we have sticked for computational reasons.
Recall that $\big(\ChEcochains(\mathfrak{g},\Sym\mathfrak{g}),\wedge,
[~,~]_s\big)$ is a \emph{Gerstenhaber algebra}, i.e.~there is
a graded Leibniz rule
\begin{equation}
[F,G\wedge H]_s =[F,G]_s\wedge H 
       + (-1)^{(|F|-1)|G|}G\wedge [F,H]_s
\end{equation}
Denoting by 
\begin{equation}\label{EqDefLinearPoissonStructure}
 \pi=[~,~]=
   \frac{1}{2}\sum_{i,j,k}c^i_{jk}e_i\otimes 
   (\epsilon^j\wedge \epsilon^k)
\end{equation}
 the so-called
\emph{linear Poisson structure} of $\mathfrak{g}^*$ we of course
have $[\pi,\pi]_s=0$, and we can use
the coboundary operator $\delta=\delta_{\mathfrak{g}}=[\pi,~]_s$ for the (shifted) Chevalley
Eilenberg coboundary operator which differs from the historical
definition by an unessential global sign.
It thus follows that $\big(\ChEcochains(\mathfrak{g},\Sym\mathfrak{g})[1],
\delta_{\mathfrak{g}}, [~,~]_s \big)$ is a differential graded Lie algebra.
The following Theorem shows that in order to check formality of
the Hochschild complex it suffices to check it for the `easier'
Chevalley-Eilenberg complex. Since we shall need Kontsevich's
formality theorem we shall assume for the rest of this Section that the field $\korps$
is equal to $\mathbb{C}$:

\begin{theorem} \label{TheoFormalityinStages}
	Let $(\mathfrak{g},[~,~])$ be a finite-dimensional complex Lie-algebra. 
	\begin{enumerate}
	\item There is a $L_\infty$-quasi-isomorphism between 
	the differential graded Lie algebra $\big(\ChEcochains(\mathfrak{g},\Sym\mathfrak{g})[1],
	\delta_{\mathfrak{g}}, [~,~]_s \big)$ and the differential graded Lie algebra $\big(\Hochcochains(\UEA\mathfrak{g},\UEA\mathfrak{g})[1],
	b,  [~,~]_G\big)$.\\
	In particular, this induces an isomorphism of graded Lie algebras of their cohomologies (with respect to $\delta_{\mathfrak{g}}$ and $b$, respectively).\footnote{This answers a question asked by F.~Wagemann for the case of finite-dimensional Lie algebras.}
	\item The $L_\infty$-formality of $\big(\ChEcochains(\mathfrak{g},\Sym\mathfrak{g})[1],
		\delta_{\mathfrak{g}}, [~,~]_s \big)$ is equivalent to
		the $L_\infty$-formality of 
	$\big(\Hochcochains(\UEA\mathfrak{g},\UEA\mathfrak{g})[1],
		b,  [~,~]_G \big)$.	
	\end{enumerate}
\end{theorem}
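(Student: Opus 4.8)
The plan is to reduce everything to Kontsevich's formality theorem on the affine manifold $\mathfrak{g}^*$ together with the twisting procedure for $L_\infty$-morphisms by Maurer--Cartan elements, making precise the sketch of \cite[Secs.~8.3.1,~8.3.2]{K03} along the lines of \cite{BM08,BMP05}. First I would set up the two `geometric' differential graded Lie algebras over $\mathfrak{g}^*$: the polyvector fields $T_{\mathrm{poly}}(\mathfrak{g}^*)$ with zero differential and the Schouten bracket $[~,~]_s$, and the polydifferential operators $D_{\mathrm{poly}}(\mathfrak{g}^*)$ with the Hochschild differential $b$ and the Gerstenhaber bracket $[~,~]_G$. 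Kontsevich's formality map furnishes an $L_\infty$-quasi-isomorphism $\mathcal{U}\colon T_{\mathrm{poly}}(\mathfrak{g}^*)\to D_{\mathrm{poly}}(\mathfrak{g}^*)$ whose first Taylor coefficient $\mathcal{U}_1$ is the Hochschild--Kostant--Rosenberg map. This is the one external input forcing us to take $\korps=\complex$.

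Next I would twist by the linear Poisson bivector $\pi$ of \eqref{EqDefLinearPoissonStructure}. Since $[\pi,\pi]_s=0$, the element $\hbar\pi$ is a Maurer--Cartan element of $T_{\mathrm{poly}}(\mathfrak{g}^*)$, and twisting the differential graded Lie algebra by it replaces the zero differential by $\delta_{\mathfrak{g}}=[\pi,~]_s$ while leaving the Schouten bracket unchanged; upon restriction to polynomial coefficients this is precisely $\big(\ChEcochains(\mathfrak{g},\Sym\mathfrak{g})[1],\delta_{\mathfrak{g}},[~,~]_s\big)$. On the other side $\mathcal{U}_*(\hbar\pi)$ is a Maurer--Cartan element of $D_{\mathrm{poly}}$, \ie a star product $\star$, and twisting $D_{\mathrm{poly}}$ by it yields the Hochschild complex of the deformed algebra $\big(\Sym\mathfrak{g},\star\big)$ with its induced Gerstenhaber bracket. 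The standard twisting lemma for $L_\infty$-morphisms (see \cite{AMM02,LV12}) then produces from $\mathcal{U}$ an $L_\infty$-quasi-isomorphism $\mathcal{U}^\pi$ between these two twisted differential graded Lie algebras.

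The crucial identification --- and the step I expect to be the main obstacle --- is to recognise the twisted polydifferential complex as the genuine Hochschild complex $\big(\Hochcochains(\UEA\mathfrak{g},\UEA\mathfrak{g})[1],b,[~,~]_G\big)$. Here one must check that for the \emph{linear} bivector $\pi$ Kontsevich's construction stays within the polynomial category (the sum defining $\star$ is finite on each pair of polynomials, so $\hbar$ may be specialised to $1$), and that the resulting Gutt star product on $\Sym\mathfrak{g}$ is, via PBW symmetrisation, isomorphic as an associative algebra to $\UEA\mathfrak{g}$; this is the content imported from \cite{BMP05,BM08}. Matching the gradings (the Grassmann degree on the Chevalley--Eilenberg side with the number-of-arguments grading on the Hochschild side) then completes the proof of statement~1. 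The asserted isomorphism of cohomology graded Lie algebras is automatic from the existence of $\mathcal{U}^\pi$: an $L_\infty$-quasi-isomorphism of differential graded Lie algebras induces an isomorphism of cohomology graded Lie algebras, exactly as in the second assertion of \autoref{CorLInfinityMapwillBeQuis}, whose argument only uses a quasi-inverse and the chain-map identities on the first two Taylor coefficients.

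Finally, statement~2 follows formally and requires no further geometry. Since every $L_\infty$-quasi-isomorphism admits a quasi-inverse (as recalled just after \eqref{EqDefLinfinityMorphism}), being $L_\infty$-quasi-isomorphic is an equivalence relation among differential graded Lie algebras, and formality --- being $L_\infty$-quasi-isomorphic to one's own cohomology with the induced bracket --- is invariant under it. Concretely, if $\big(\ChEcochains(\mathfrak{g},\Sym\mathfrak{g})[1],\delta_{\mathfrak{g}},[~,~]_s\big)$ is formal, then composing its quasi-isomorphism to its cohomology with $\mathcal{U}^\pi$ and using the identification of the two cohomology graded Lie algebras from statement~1 exhibits $\big(\Hochcochains(\UEA\mathfrak{g},\UEA\mathfrak{g})[1],b,[~,~]_G\big)$ as formal; the quasi-inverse of $\mathcal{U}^\pi$ gives the converse implication.
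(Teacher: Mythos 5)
Your overall route coincides with the paper's: both proofs obtain the $L_\infty$-morphism by twisting Kontsevich's formality map for $\mathfrak{g}^*$ by the linear Poisson structure $\pi$, identify the resulting convergent star product on $\Sym\mathfrak{g}$ with the multiplication of $\UEA\mathfrak{g}$ via PBW symmetrisation following \cite{BMP05,BM08}, and deduce statement~2 and the isomorphism of cohomology graded Lie algebras formally from the existence of quasi-inverses. Those last two points are handled correctly.

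The gap sits at the step you flag as the main obstacle, but it is not where you locate it. The identification of the twisted polydifferential complex with the Hochschild complex of $\UEA\mathfrak{g}$ (finiteness of the star product on polynomials, specialisation $\hbar=1$) is indeed the part imported from \cite{BMP05,BM08}. What is \emph{not} automatic is your assertion that ``the standard twisting lemma for $L_\infty$-morphisms then produces \ldots an $L_\infty$-\emph{quasi-isomorphism} $\mathcal{U}^\pi$.'' The twisting lemma yields a quasi-isomorphism only in the formal setting, where the $\hbar$-adic filtration controls the comparison of twisted and untwisted cohomologies; after specialising $\hbar=1$ and restricting to polynomial coefficients that filtration is gone, and the quasi-isomorphism property of the specialised map $e^{*\varphi'}$ has to be re-proved --- indeed the paper notes that precisely this property was only stated, not proved, in \cite[Thm.~6.2]{BM08}. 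The paper supplies the missing argument: it takes the classical contraction between $\ChEcochains(\mathfrak{g},\Sym\mathfrak{g})$ and $\Hochcochains(\UEA\mathfrak{g},\UEA\mathfrak{g})$ given by antisymmetrisation of arguments composed with the symmetrisation $\omega$, reduces via \autoref{CorLInfinityMapwillBeQuis} to showing that $\psi_{CE}[2]\circ\varphi'_1$ is invertible, and proves invertibility by a filtration argument in the \emph{polynomial} degree: the leading term is a nonzero multiple of the identity, while a count of derivatives in Kontsevich's graphs shows that each correction term $\varphi_{r+1}(\pi^{\bullet r}\bullet F)$ strictly lowers the polynomial degree, so the composite is ``upper triangular with invertible diagonal.'' Without this (or an equivalent) argument your proof of statement~1 is incomplete.
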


\begin{proof}
	In \cite[Theorem 6.2]{BM08}, a morphism of differential graded
	coalgebras $e^{*\varphi'}$
	from $\Sym \big(\ChEcochains(\mathfrak{g},\Sym\mathfrak{g})[2]\big)$
	to $\Sym\big(\Hochcochains(\UEA\mathfrak{g},\UEA\mathfrak{g})[2]\big)$
	had been constructed by twisting the well-known Kontsevich formality quasi-isomorphism $e^{*\varphi}$
	from $\Sym \big(\ChEcochains(\mathfrak{g},\Sym\mathfrak{g})[2]\big)$
	to $\Sym\big(\Hochcochains(\Sym\mathfrak{g},
	\Sym\mathfrak{g})[2]\big)$
	by the formal exponential $e^{\bullet \hbar\pi}$ of the linear Poisson structure $\pi$ defined
	by the Lie bracket of $\mathfrak{g}$ and observing that
	the resulting map converges for $\hbar=1$ on polynomials. 
	Here the fact that the universal enveloping algebra of 
	$\mathfrak{g}$ can be seen as a converging Kontsevich deformation 
	of the symmetric algebra (sketched in 
	\cite[Secs.~8.3.1,~8.3.2]{K03}) has also been used, see \cite{BMP05} for more details. Now $e^{*\varphi'}$ is even a $L_\infty$-quasi-isomorphism which has just been stated without proof in \cite[Theorem 6.2]{BM08}. We shall indicate the proof of it
	which is relatively straight-forward. Recall the standard
	quasi-isomorphism of the Chevalley-Eilenberg complex of
	$\mathfrak{g}$ with values in $\Sym \mathfrak{g}$ with the
	Hochschild cohomology complex $\Hochcochains(\UEA\mathfrak{g},\UEA\mathfrak{g})$ of its enveloping algebra which we can write in the following way as a contraction of complexes:
	\begin{equation*}
	\begin{tikzpicture}[baseline=(current bounding box.184)]
	\matrix (m) [matrix of math nodes,nodes in empty cells,column sep=1em,text height=1.5ex,text depth=0.25ex]
	{\big(\ChEcochains(\mathfrak{g},\Sym\mathfrak{g}),
		\delta_{\mathfrak{g}}\big)
		& &  
		\big(\Hochcochains(\UEA\mathfrak{g},\UEA\mathfrak{g}),
		b\big)
		& \\};
	\path[right hook->]
	([yshift=-5pt]m-1-1.north east) edge node [above] {$\phi_{CE}$} ([yshift=-5pt]m-1-3.north west);
	\path[->>]
	([yshift=5pt]m-1-3.south west) edge node [below] {$\psi_{CE}$} ([yshift=5pt]m-1-1.south east);
	\draw[->] (m-1-4.north) arc (120:-120:2ex);
	\draw (m-1-4) ++(2em,0em) node {$h$};
	\end{tikzpicture}.
	\end{equation*}
	Here $\psi_{CE}$ is given by $\psi_{CE}(F)=\omega^{-1}\circ F\circ \mathcal{F}$
	where the map $\mathcal{F}:\Lambda^{\bullet} \mathfrak{g}\to 
	\UEA\mathfrak{g}^{\otimes }$ is
	already given in \cite[p.280]{CE56} 
	and consists of evaluation of a Hochschild cochain on
	the antisymmetrization of its arguments restricted to
	$\mathfrak{g}\subset \UEA\mathfrak{g}$, and $\omega:\Sym\mathfrak{g}
	\to \UEA\mathfrak{g}$ is the canonical symmetrization map
	which is an isomorphism of  $\mathfrak{g}$-modules, see e.g.~\cite[p.78]{Dix77}.
	$\phi_{CE}$ is a quasi-inverse of $\psi_{CE}$ and 
	$h$ is a chain homotopy which are much harder to describe explicitly: it can be done in terms of Eulerian idempotents and iterated integrals, see the PhD-thesis of S.~Rivi\`{e}re \cite{Riv12}. By Corollary \ref{CorLInfinityMapwillBeQuis} we just have to check
	whether $\psi_{CE}[2]\circ \varphi'_1$ is an invertible
	map: according to eqn
	(6.1) of Thm 6.1 of \cite{BM08} $\varphi'_1$ takes the
	following form: for any polyvector-field $F$ of rank
	$m$ in $\ChEcochains(\mathfrak{g},\Sym\mathfrak{g})$
	and $m$ polynomials $f_1,\ldots,f_m$ in $\Sym\mathfrak{g}$
	(seen as polynomial functions on $\mathfrak{g}^*$)
	we have (the image of the Kontsevich formality map $\varphi$
	are poly-differential operators)
	\[
	 \varphi_1'(F)(f_1,\ldots,f_m)
	 =\varphi_1(F)(f_1,\ldots,f_m)
	   +\sum_{r=1}^\infty\frac{1}{r!}
	   \varphi_{r+1}(\pi^{\bullet r}\bullet F)(f_1,\ldots,f_m).
	\]
	Let $x_1,\ldots,x_m\in\mathfrak{g}$ seen as linear functions
	on the dual space $\mathfrak{g}^*$. Then 
	\begin{align}\label{EqKontsevichTwistedOneCircleCaEi}
	 &(\psi_{CE}[2]\circ \varphi'_1)(F)(x_1,\ldots,x_m)
	  = \notag \\
	  & \mathrm{Alt}\left(\varphi_1(F)(x_1,\ldots,x_m)
	  +\sum_{r=1}^\infty\frac{1}{r!}
	  \varphi_{r+1}(\pi^{\bullet r}\bullet F)(x_1,\ldots,x_m)\right)
	 \end{align}
	 where $\mathrm{Alt}$ denotes antisymmetrization in the
	 arguments $x_1,\ldots,x_m$. The first term on the
	 right hand side of eqn (\ref{EqKontsevichTwistedOneCircleCaEi}) is up to a nonzero
	 constant factor equal to $\xi(x_1,\ldots,x_m)$. In the second term on the right hand side of eqn (\ref{EqKontsevichTwistedOneCircleCaEi}) involving the
	 sum $\sum_{r=0}^\infty$
	 we check the polynomial degree of the corresponding
	 polyvector-field: due to Kontsevich's universal formula \cite{K03} it follows that 
	 $\varphi_{r+1}(\pi^{\bullet r}\bullet F)$ is a polydifferential operator acting on $m$ functions: it is a finite sum parametrised by certain graphs where in each term 
	 $2r+m$ partial derivatives are distributed over the
	 $r$ linear Poisson structures $\pi$, the polynomial
	 polyvector-field $F$ (where $\delta$ denotes the maximal
	 polynomial degree of its coefficients), and the $m$ functions. In eqn  (\ref{EqKontsevichTwistedOneCircleCaEi}) we need to consider only $m$ linear functions $x_1,\ldots,x_m$, hence $2r+m$ derivatives meet a polynomial of degree $r+\delta+m$,
	 whence the resulting polynomial degree is $\delta-r$:
	 it follows that the above sum in the second part on the right hand side of
	 (\ref{EqKontsevichTwistedOneCircleCaEi}) has at most
	 $\delta$ terms, and the polynomial degree of the resulting
	 polyvector-field is stricly lower than $\delta$.
	 By a simple filtration argument in the polynomial degree
	 it follows that
	 $\psi_{CE}[2]\circ \varphi'_1$ is equal to an invertible
	 map plus lower order terms and is thus invertible proving
	 the first part of the Theorem. \\
	 The second statement is immediate.
\end{proof}

In the following subsections we check formality for
the Chevalley-Eilenberg complex of certain finite-dimensional
Lie algebras, hence we look at the following differential graded
Lie algebras
\begin{equation}
  \big(\mathfrak{G},b,[~,~]_G\big)
   =\big(\ChEcochains (\mathfrak{g},\Sym \mathfrak{g})[1],
          \delta_\mathfrak{g}, [~,~]_s\big)
          ~~~\mathrm{and}~~
     \big(\mathfrak{H},[~,~]_H\big)
     =
     \big(\ChE (\mathfrak{g},\Sym \mathfrak{g})[1],[~,~]_H\big).
\end{equation}
We shall denote the $\delta_\mathfrak{g}$-cohomology classes
of a cocycle $F$ in $\ChEcochains (\mathfrak{g},\Sym \mathfrak{g})$ by $[F]$.

\subsection{\texorpdfstring{Abelian Lie algebras}
	        {Abelian Lie algebras}}

In case the Lie algebra $\mathfrak{g}$ is abelian, $\UEA\mathfrak{g} = \Sym{\mathfrak{g}}$, and then there is nothing to prove since the Chevalley-Eilenberg differential is zero, whence $\ChEcochains(\mathfrak{g},\Sym \mathfrak{g})\cong
\ChE(\mathfrak{g},\Sym \mathfrak{g})$,
and formality of $\ChEcochains(\mathfrak{g},\Sym \mathfrak{g})$ is the content of the Kontsevich formality theorem.

\subsection{\texorpdfstring{Lie algebra of the affine group of $\korps^m$}{Lie algebra of the affine group of {K\textasciicircum m}}}

\begin{theorem}[\protect{\cite[Theorem 6.3]{BM08}}]
	Let $\mathfrak{g}$ be the affine Lie algebra \ie the semidirect sum
	\begin{equation*}
	\mathfrak{gl}(m,\korps) \oplus \korps^m.
	\end{equation*}
	Then the differential graded Lie algebra $\big(\ChEcochains(\mathfrak{g},\Sym \mathfrak{g})[1], \delta_\mathfrak{g},[~,~]_s\big)$ is formal.
\end{theorem}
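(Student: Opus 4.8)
The statement is \cite[Theorem 6.3]{BM08}, so one route is simply to quote that reference; I sketch instead how it can be recovered with the machinery assembled above, and why it is consistent with the non-formality results established later for semisimple algebras. The plan is to work directly with the differential graded Lie algebra $\big(\mathfrak{G},b,[~,~]_G\big)=\big(\ChEcochains(\mathfrak{g},\Sym\mathfrak{g})[1],\delta_\mathfrak{g},[~,~]_s\big)$ and to prove formality by showing that the whole tower of transferred higher brackets on its cohomology vanishes or can be gauged away.

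First I would fix the semidirect structure $\mathfrak{g}=\mathfrak{gl}(m,\korps)\ltimes\korps^m$, writing $V=\korps^m$ for the abelian translation ideal. The decisive structural input, which separates this case sharply from the reductive one, is that $\mathfrak{g}$ is a \emph{Frobenius} Lie algebra: it possesses an open coadjoint orbit in $\mathfrak{g}^*$, so the only coadjoint-invariant polynomials are the constants, \ie $(\Sym\mathfrak{g})^{\mathfrak{g}}=\korps$. In the language of \eqref{EqDefLinearPoissonStructure} this says that the linear Poisson structure $\pi$ is symplectic on a dense open subset of $\mathfrak{g}^*$ and admits no nonconstant Casimirs. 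This is exactly what dismantles the mechanism behind the paper's non-formality results: for semisimple algebras the Cartan $3$-cocycle obstructs formality precisely because it can be paired with the nonconstant invariant polynomials built from the quadratic Casimir, whereas here there are none. I would then compute $\mathfrak{H}=\ChE(\mathfrak{g},\Sym\mathfrak{g})[1]$ in full, using a Hochschild--Serre filtration for the ideal $V\subset\mathfrak{g}$ together with the vanishing of invariants.

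Next I would feed this cohomology into the transfer machinery. Choosing a homotopy contraction of $(\mathfrak{G},b)$ onto $\mathfrak{H}$ and applying \autoref{Thm:PertBordemann} produces a minimal $L_\infty$-structure $d=d_2+\sum_{k\geqslant 3}d_k$ on $\mathfrak{H}$ together with an $L_\infty$-quasi-isomorphism to $\mathfrak{G}$, so that formality becomes the assertion that the $d_k$ for $k\geqslant 3$ are gauge-trivial. The first obstruction is the characteristic $3$-class $c_3$ of \autoref{PW3Z3C3}: I would compute the representing cocycle $z_3=\pi\circ w_3$ of \eqref{EqDefZThree}, using the explicit $\phi_2$ of \eqref{EqCompPhiTwoPertShifted}, and check that, unlike for $\mathfrak{so}(3)$ or the free Lie algebras treated below, the absence of nonconstant Casimirs forces $c_3=0$.

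The hard part is not $c_3$ but the rest of the tower: vanishing of $c_3$ only yields an $L_\infty$-quasi-isomorphism of order $3$, whereas formality demands that all higher obstructions vanish simultaneously. I expect this to be the main obstacle, and the place where the Frobenius geometry is indispensable. Once $\mathfrak{H}$ has been identified from the open-orbit picture and is seen to carry the trivial induced bracket in the relevant bidegrees, a bidegree count on the maps $d_k:\Sym[k](\mathfrak{H}[1])\to\mathfrak{H}[1]$ should leave no room for nonzero higher brackets, so that the transferred structure is already minimal with $d=d_2$ and formality follows. Equivalently, and perhaps more robustly, I would try to produce a single explicit morphism of differential graded Lie algebras from $\mathfrak{G}$ onto its cohomology and conclude with \autoref{CorLInfinityMapwillBeQuis}; in either case the decisive ingredient remains the open coadjoint orbit of $\mathfrak{g}$, which is exactly what fails for semisimple Lie algebras and is the reason formality holds here.
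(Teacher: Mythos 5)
Your proposal correctly identifies why the obstruction mechanism of Theorem~\ref{PQuadraticLieCharacteristicClass} is inapplicable here (the affine algebra is Frobenius, so $(\Sym\mathfrak{g})^{\mathfrak{g}}=\korps\un$ and there is no supply of nonconstant Casimirs to pair against a Cartan-type cocycle), but ruling out one mechanism for non-formality is not a proof of formality, and every decisive step of your plan is left as an expectation: you never compute $\ChE(\mathfrak{g},\Sym\mathfrak{g})$, never verify $c_3=0$, and the ``bidegree count'' you hope will kill all $d_k$ for $k\geqslant 3$ is not carried out. Note that such counts do not by themselves yield formality --- for the free Lie algebra the analogous count in Theorem~\ref{TFreeSigma} kills $d_n$ only for $n\geqslant 4$ and leaves a genuinely nonzero $d_3$, and for $\mathfrak{so}(3)$ the cohomology is small and the induced bracket trivial, yet formality fails.

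The missing idea is the one the paper (following \cite[Theorem 6.3]{BM08}) actually uses, and it short-circuits the entire obstruction tower in one line: the cohomology $\ChE(\mathfrak{g},\Sym\mathfrak{g})$ is represented by \emph{constant} poly-vector fields, i.e.\ by elements of $\Lambda\mathfrak{g}^*\subset\ChEcochains(\mathfrak{g},\Sym\mathfrak{g})$. By formula (\ref{EqDefSchoutenBracketFinDim}) the Schouten bracket of two such elements vanishes, since it differentiates the (constant) coefficients; hence these representatives form an abelian graded Lie subalgebra consisting of cocycles, the induced bracket $[~,~]_H$ is zero, and the resulting section $\phi_1:\mathfrak{H}\to\mathfrak{G}$ is itself a morphism of differential graded Lie algebras inducing an isomorphism in cohomology. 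Taking $\varphi_1=\phi_1[1]$ and $\varphi_k=0$ for $k\geqslant 2$ gives the formality quasi-isomorphism directly, with no transfer machinery, no computation of $c_3$, and no analysis of higher brackets. Your closing suggestion of ``a single explicit morphism of differential graded Lie algebras from $\mathfrak{G}$ onto its cohomology'' gestures at this but points the arrow the wrong way and supplies no construction; the correct and easy map is the \emph{inclusion} of the constant representatives, and the nontrivial input you would still owe is the computation (from \cite{BM08}) that these constants exhaust the cohomology.
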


Here the cohomology is represented by certain `constant poly-vector fields', i.e. by elements of $\Lambda \mathfrak{g}^*$ whose
Schouten brackets all vanish, hence the cohomology injects as a graded Lie sub-algebra of the complex which gives the formality.

\subsection{\texorpdfstring{Quadratic Lie algebras}
	       {Quadratic Lie algebras}}

Recall that a symmetric bilinear form 
$\kappa:\mathfrak{g}\times \mathfrak{g}\to\korps$ is called
\emph{invariant} if for all $\xi,\xi',\xi''\in\mathfrak{g}$
we have
\begin{equation}
    \kappa\big([\xi,\xi'],\xi''\big)
    =\kappa\big(\xi,[\xi',\xi'']\big).
\end{equation}
A triple $(\mathfrak{g},[~,~],\kappa)$ is called a \emph{quadratic Lie algebra} if the symmetric bilinear form $\kappa$ is
\emph{invariant and nondegenerate}. Examples are abelian Lie algebras (with any nondegenerate symmetric bilinear form), or
semisimple Lie algebra equipped with their \emph{Killing form}
$\kappa(\xi,\xi')=
\mathrm{trace}(\mathrm{ad}_\xi\circ\mathrm{ad}_{\xi'})$.
See e.g.~\cite{MR85}, \cite{AB93}, or \cite{Bor97} for more details on these algebras. We shall call any Lie algebra
admitting a nondegenerate invariant symmetric bilinear form
metrisable, see \cite{Bor97}. Let $q\in {\Sym}^2\mathfrak{g}$ be the `inverse' of 
$\kappa$: if $\kappa^\flat:\mathfrak{g}\to \mathfrak{g}^*$ denotes the canonical map 
$\xi\mapsto (\eta\mapsto \kappa(\xi,\eta))$, take its inverse $\kappa^\sharp:\mathfrak{g}^*\to \mathfrak{g}$, and consider it as an element $q$ in $\Sym[2]\mathfrak{g}$, or using a base $e_1,\ldots,e_n$ of $\mathfrak{g}$, and
$q=\sum_{i,j=1}^nq^{ij}e_i\otimes e_j$ where $q^{ij}\in\korps$
are the components of the symmetric bivector $q$, then
for all $\xi\in\mathfrak{g}$: $\sum_{i,j=1}^n\kappa(\xi,e_i)q^{ij}e_j=\xi$. Then $q$ is invariant under the adjoint representation of $\mathfrak{g}$.
Consider the morphism of commutative associative unital algebras $\korps[t]\to \Sym \mathfrak{g}$ induced by $t\mapsto
q$. Since every symmetric power $q^{\bullet m}\in 
\Sym[2m]\mathfrak{g}$ is easily seen to be nonzero (because
the free trivially graded commutative algebra 
$\Sym \mathfrak{g}$ does not have nilpotent elements), and
since the subspaces $\Sym[i]\mathfrak{g}$ are independent,
the above morphism is injective, and we denote its image
by $\korps[q]\subset \Sym \mathfrak{g}$. Clearly
every polynomial $\alpha\in\korps[q]\subset$ is invariant.  Next, there are three more important elements of $\ChEcochains(\mathfrak{g},\Sym\mathfrak{g})$,
the linear Poisson structure $\pi$ (see eqn \ref{EqDefLinearPoissonStructure}), the \emph{Euler field}
$E\in \mathfrak{g}\otimes \mathfrak{g}^*\cong \Hom(\mathfrak{g},\mathfrak{g})$, defined by the identity
map $\mathfrak{g}\to \mathfrak{g}$, 
$E=\sum_{i=1}^ne_i\otimes \epsilon^i$, 
and the \emph{Cartan $3$-cocycle}
$\Omega\in \Lambda^3\mathfrak{g}^*$ defined by
\begin{equation}
  \Omega(\xi,\xi',\xi'') = \kappa\big(\xi,[\xi',\xi'']\big)
\end{equation}
 for all $\xi,\xi',\xi''\in\mathfrak{g}$. Upon using formula
 (\ref{EqDefSchoutenBracketFinDim}) we easily compute the
 following Schouten brackets where 
 $\alpha,\beta,\gamma\in \korps[q]$ and $\alpha'$ denotes
 the derivative of the polynomial $\alpha$:  
 \begin{eqnarray}
  [\alpha,\beta]_s & = & 0, \label{EqCompSchoutenOneOne}\\
  \delta_{\mathfrak{g}}(\alpha) & = & [\pi,\alpha]_s = 0, 
        \label{EqCompSchoutenOnePi}  \\
  ~[E,\alpha]_s & = & 2q\alpha', 
    \label{EqCompSchoutenEOne} \\
  \delta_{\mathfrak{g}}(\alpha \wedge E) & = &
       [\pi,\alpha \wedge E]_s = \alpha \wedge\pi,  
       \label{EqCompSchoutenPiAlphaE}\\
  \delta_{\mathfrak{g}}(\alpha \wedge\Omega) & = & 
      [\pi, \alpha \wedge\Omega]_s = 0, 
      \label{EqCompSchoutenPiAlphaOmega}\\
  ~[E,\Omega]_s & = & -3\Omega,  
      \label{EqCompSchoutenEOmega}\\
  ~[\beta\wedge\Omega,\alpha]_s & = &
   2(\beta\alpha')\wedge\pi
   = \delta_{\mathfrak{g}}\big(2(\beta\alpha')\wedge E\big),
    \label{EqCompSchoutenBetaOmegaAlphaOne}\\
  ~[\beta\wedge \Omega,\gamma\wedge \Omega]_s 
     & = & 2(\beta\gamma'-\gamma\beta')\wedge\pi\wedge\Omega
           \nonumber \\
            & = &
    \delta_{\mathfrak{g}}\big(2(\beta\gamma'-\gamma\beta')\wedge
                        E\wedge\Omega\big).
         \label{EqCompSchoutenBetaOmegaGammaOmega}   
 \end{eqnarray}

 We can now compute a representing graded $3$-cocycle
 $z_3$ (see eqn (\ref{EqDefZThree}))
 for the characteristic $3$-class $c_3$ of
 $\ChEcochains(\mathfrak{g},\Sym \mathfrak{g})$ on certain
 elements of $\ChEcochains(\mathfrak{g},\Sym \mathfrak{g})$.
 For this purpose it seems to be interesting to single out
 a subclass of finite-dimensional quadratic Lie algebras: 
 We call a quadratic
 Lie algebra $(\mathfrak{g},[~,~],\kappa)$
 a \emph{Cartan-$3$-regular quadratic Lie algebra} if the cohomology
 class of the Cartan cocycle $\Omega$, $[\Omega]$, is nonzero.
 A metrisable Lie algebra will be called Cartan-$3$-regular
 if there is a nondegenerate symmetric invariant bilinear form
 $\kappa$ such that $(\mathfrak{g},[~,~],\kappa)$
 is Cartan-$3$-regular. Semisimple Lie algebras are well-known to be Cartan-$3$-regular. 
 
 \begin{lemma} \label{LPropertiesCartan3Regular}
 	Let $(\mathfrak{g},[~,~],\kappa)$ be a quadratic
 	Lie algebra of finite dimension, and let
 	$(\Sym \mathfrak{g})^\mathfrak{g}$ denote the subspace
 	of all $\mathrm{ad}$-invariant elements of 
 	$\Sym \mathfrak{g})^\mathfrak{g}$
 	\begin{enumerate}
 		\item Then $\korps[q]\subset 
 		(\Sym \mathfrak{g})^\mathfrak{g}
 		   \cong \ChE[0](\mathfrak{g},\Sym \mathfrak{g})$.
 		\item $(\mathfrak{g},[~,~],\kappa)$ is 
 		 Cartan-$3$-regular iff
 		   there is no derivation of $\mathfrak{g}$ whose
 		   $\kappa$-symmetric part is a nonzero multiple of the
 		   identity.
 		\item If $(\mathfrak{g},[~,~],\kappa)$
 		 is Cartan-$3$-regular then the linear 
 		 map $\korps[q]\to
 		 \ChE[3](\mathfrak{g},\Sym \mathfrak{g})$ defined
 		 by  $\alpha\mapsto [\alpha\wedge\Omega]$ is an
 		 injection.
 	\end{enumerate}
 	\begin{proof}
 		\textbf{1.} The last isomorphy is true for any
 		Lie algebra since the $0$-coboundaries vanish.
 		As $q$ is $\mathrm{ad}$-invariant, every polynomial
 		of $q$ is also $\mathrm{ad}$-invariant.\\
 		\textbf{2.} $(\mathfrak{g},[~,~],\kappa)$ is not
 		 Cartan-$3$-regular iff there is a $2$-form 
 		 $\theta:\Lambda^2\mathfrak{g}\to\korps$ such that
 		 $\Omega = -\delta_\mathfrak{g}\theta$. The space
 		 of all $2$-forms is isomorphic to the space of
 		 all $\kappa$-antisymmetric linear maps 
 		 $C:\mathfrak{g}\to \mathfrak{g}$ via 
 		 $C\mapsto \big((\xi,\eta)\mapsto
 		  \kappa(C(\xi),\eta)\big)$. Thanks to nondegeneracy
 		  and invariance of $\kappa$ the condition 
 		  $\Omega=-\delta_\mathfrak{g}\theta$ is easily be computed to be equivalent to
 		  \[
 		     [\xi,\eta] = C[\xi,\eta]-[C(\xi),\eta]
 		                     -[\xi,C(\eta)]
 		  \]
 		  for all $\xi,\eta\in\mathfrak{g}$. The above equation
 		  is equivalent to $D=C+I$ being a derivation of the
 		  Lie algebra $\mathfrak{g}$.
 		  The $\kappa$-symmetric part of $D$ is clearly 
 		  the identity map $I$.\\
 		  \textbf{3.} Note that for any nonnegative
 		  integer $n$ the symmetric power 
 		  $\kappa^n$ can be seen as a nonzero linear form
 		  on $\Sym[2n]\mathfrak{g}$ where in particular
 		  $\kappa^n(q^n)\neq 0$. Thanks to the invariance of
 		  $\kappa$ we have $\kappa^n([\xi,T])=0$ for any
 		  $\xi\in \mathfrak{g}$ and 
 		  $T\in\Sym[2n-1]\mathfrak{g}$. Denoting by $K_{2n}$ the linear map $\Sym[2n]\mathfrak{g}\otimes
 		  \Lambda \mathfrak{g}^*\to \Lambda \mathfrak{g}^*$
 		  sending $S\otimes \xi$ to $\kappa^n(S)\xi$ 
 		  and by $K$ the sum over all even degrees
 		  (on the odd degrees $K$ being defined to be zero)
 		  we see that $K$ intertwines Chevalley-Eilenberg differentials w.r.t.~the usual representation
 		  on $\Sym \mathfrak{g}$ induced by the adjoint representation and those w.r.t.~the 
 		  trivial representation on
 		  $\korps$. It suffices to show that each $q^n\wedge\Omega$ gives a non-zero class in cohomology. If there was $\theta\in 
 		  \Sym \mathfrak{g}\otimes \Lambda^2\mathfrak{g}^*$ such that $q^n\wedge\Omega=
 		  \delta_\mathfrak{g}(\theta)$, then --upon applying
 		  $K$ to this equation-- a non-zero multiple of
 		  $\Omega$ would be an exact form which would be in
 		  contradiction with $\mathfrak{g}$ being
 		  Cartan-$3$-regular.
 	\end{proof}
 \end{lemma}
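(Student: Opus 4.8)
The plan is to handle the three assertions separately, the common thread being that $\delta_\mathfrak{g}=[\pi,~]_s$ preserves the polynomial degree in $\Sym\mathfrak{g}$ (visible from the explicit Schouten bracket (\ref{EqDefSchoutenBracketFinDim}): the $\mathrm{ad}$-terms act degree-preservingly and the bracket terms do not touch the module). For the first assertion I would note that there are no $(-1)$-cochains, so $\ChE[0](\mathfrak{g},\Sym\mathfrak{g})$ equals the space of $0$-cocycles, which are precisely the $\mathrm{ad}$-invariants $(\Sym\mathfrak{g})^\mathfrak{g}$; since $q$ is $\mathrm{ad}$-invariant and $\mathrm{ad}_\xi$ acts as a derivation of the commutative product, every power $q^n$ is invariant, giving $\korps[q]\subset(\Sym\mathfrak{g})^\mathfrak{g}$.

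For the second assertion, the first step is to use degree-preservation: as $\Omega$ has polynomial degree $0$, non-regularity ($[\Omega]=0$) forces $\Omega=-\delta_\mathfrak{g}\theta$ for a \emph{constant-coefficient} $2$-form $\theta\in\Lambda^2\mathfrak{g}^*$, obtained by taking the polynomial-degree-$0$ part of any primitive. I would then identify $\theta$ with the unique $\kappa$-antisymmetric endomorphism $C$ of $\mathfrak{g}$ satisfying $\theta=\kappa(C\cdot,\cdot)$, using nondegeneracy of $\kappa$. Expanding $\delta_\mathfrak{g}\theta$ — whose $\mathrm{ad}$-terms vanish because $\theta$ is scalar-valued — and repeatedly applying invariance of $\kappa$ to move brackets onto $C$, the equation $\Omega=-\delta_\mathfrak{g}\theta$ rewrites as $C[\xi,\eta]-[C\xi,\eta]-[\xi,C\eta]=[\xi,\eta]$, which says exactly that $D:=C+I$ (with $I$ the identity) is a derivation. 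Since $C$ is $\kappa$-antisymmetric and $I$ is $\kappa$-symmetric, the $\kappa$-symmetric part of $D$ is $I$; conversely a derivation whose $\kappa$-symmetric part is a nonzero multiple of $I$ rescales to this situation, yielding the stated equivalence.

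For the third assertion, I would first reduce injectivity to showing each class $[q^n\wedge\Omega]$ is nonzero: the elements $q^n\wedge\Omega$ sit in distinct polynomial degrees $2n$, and $\delta_\mathfrak{g}$ preserves polynomial degree, so a relation $[\sum_n c_n q^n\wedge\Omega]=0$ forces each $c_n[q^n\wedge\Omega]=0$ separately. To show $[q^n\wedge\Omega]\neq 0$ the key device is a chain-level projection $K=\sum_n K_{2n}$ onto the constant forms, where $K_{2n}$ contracts the $\Sym[2n]\mathfrak{g}$-factor against the invariant linear form $\kappa^n$ and $K$ is zero in odd polynomial degree. Invariance of $\kappa$ gives $\kappa^n([\xi,T])=0$, which is precisely what makes $K$ annihilate the $\mathrm{ad}$-terms of $\delta_\mathfrak{g}$, so that $K$ intertwines the $\Sym\mathfrak{g}$-valued differential with the trivial-coefficient one. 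Then applying $K$ to a hypothetical $q^n\wedge\Omega=\delta_\mathfrak{g}\theta$ and using $\kappa^n(q^n)\neq 0$ would exhibit $\Omega$ as a trivial-coefficient coboundary, contradicting Cartan-$3$-regularity. I expect the verification that $K$ is genuinely a chain map for the adjoint-module differential to be the main obstacle: it is the one place where invariance of $\kappa$ must be deployed in exactly the right form, whereas the manipulations in parts $1$ and $2$ are comparatively mechanical.
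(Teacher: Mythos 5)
Your proposal is correct and takes essentially the same route as the paper's proof: part 1 via $0$-cocycles $=$ $\mathrm{ad}$-invariants, part 2 via the identification of scalar $2$-forms with $\kappa$-antisymmetric endomorphisms $C$ and the observation that $\Omega=-\delta_\mathfrak{g}\theta$ is equivalent to $C+I$ being a derivation, and part 3 via the $\kappa^n$-contraction $K$ intertwining the $\Sym\mathfrak{g}$-valued differential with the trivial-coefficient one. The only difference is that you make explicit the polynomial-degree-preservation arguments (reducing to a scalar primitive in part 2 and to the individual classes $[q^n\wedge\Omega]$ in part 3) which the paper leaves implicit.
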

  We have the following central result:
 \begin{theorem}\label{PQuadraticLieCharacteristicClass}
 	Let $\big(\mathfrak{g},[~,~],\kappa\big)$ be a finite-dimensional Cartan-$3$-regular quadratic Lie algebra.\\
 	Then the Hochschild complex of its universal envelopping
 	algebra is NOT $L_\infty$-formal.
\end{theorem}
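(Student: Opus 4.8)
The plan is to reduce the formality of the Hochschild complex to a computation of the characteristic $3$-class of the Chevalley--Eilenberg complex, and then to show that this class is nonzero by testing it on the Casimir. By \autoref{TheoFormalityinStages}(2) the $L_\infty$-formality of $\big(\Hochcochains(\UEA\mathfrak{g},\UEA\mathfrak{g})[1],b,[~,~]_G\big)$ is equivalent to that of $\big(\mathfrak{G},\delta_\mathfrak{g},[~,~]_s\big)=\big(\ChEcochains(\mathfrak{g},\Sym\mathfrak{g})[1],\delta_\mathfrak{g},[~,~]_s\big)$, so it suffices to treat the latter. Since $c_3$ is the first obstruction to formality (\autoref{PW3Z3C3}(4)), I would produce a representing graded Chevalley--Eilenberg cocycle $z_3$ and show $z_3\notin\Img\delta_\mathfrak{H}$.

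First I would fix the section $\phi_1$ sending each class to the obvious representative ($\phi_1([\alpha])=\alpha$ for $\alpha\in\korps[q]$, $\phi_1([\Omega])=\Omega$), and determine $\phi_2$ from \eqref{EqCompShiftedLInfinityOrder2}, i.e. $\delta_\mathfrak{g}\phi_2(y,y')=-[\phi_1 y,\phi_1 y']_s$. The Schouten identities \eqref{EqCompSchoutenOneOne}--\eqref{EqCompSchoutenBetaOmegaGammaOmega} make every right-hand side exact with an explicit primitive; in particular \eqref{EqCompSchoutenBetaOmegaAlphaOne} and \eqref{EqCompSchoutenPiAlphaE} give $\phi_2([q],[\Omega])=2E$, while $\phi_2([q],[q])=0$ by \eqref{EqCompSchoutenOneOne}. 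Substituting into \eqref{EqDefWThree} and using $[q,E]_s=-2q$ (from \eqref{EqCompSchoutenEOne}) I get, on the repeated odd class $[q]\in\mathfrak{H}$,
\[
 w_3\big([q],[q],[\Omega]\big)=4q+4q=8q,\qquad z_3\big([q],[q],[\Omega]\big)=[\,8q\,]\neq0
\]
in $\ChE[0](\mathfrak{g},\Sym\mathfrak{g})=(\Sym\mathfrak{g})^{\mathfrak{g}}$, the class being nonzero because $q$ is a nonzero invariant. (Evaluation on $[q]\wedge[q]\wedge[\Omega]$ is legitimate: $[q]$ has odd degree in $\mathfrak{H}=\ChE(\mathfrak{g},\Sym\mathfrak{g})[1]$, so $[q]\wedge[q]\neq0$ in $\Lambda\mathfrak{H}$.)

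It remains to show $z_3$ is not a coboundary. Here I would exploit that $\korps[q]\oplus\korps[q]\wedge\Omega$ is an \emph{abelian} graded Lie subalgebra of $\mathfrak{H}$: by \eqref{EqCompSchoutenOneOne}, \eqref{EqCompSchoutenBetaOmegaAlphaOne} and \eqref{EqCompSchoutenBetaOmegaGammaOmega} all mutual brackets are $\delta_\mathfrak{g}$-exact, hence vanish on cohomology. Evaluating $\delta_\mathfrak{H}\chi$ on $[q]\wedge[q]\wedge[\Omega]$ through \eqref{EqCompGradedChevalleyEilenberg}, every term $\chi([y_i,y_j]_H,y_k)$ drops out, the $[\Omega]$-adjoint term vanishes for degree reasons ($\chi([q],[q])$ would have negative form degree), and one is left with a multiple of $[[q],\chi([q],[\Omega])]_H$. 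By polynomial-degree homogeneity ($\delta_\mathfrak{g}$ preserves the $\Sym$-degree and $z_3$ has $\Sym$-degree $-2$) the element $\chi([q],[\Omega])$ lies in $\ChE[1](\mathfrak{g},\mathfrak{g})$, i.e. it is represented by a derivation $D$ of $\mathfrak{g}$, and $[[q],[D]]_H=[-\mathcal{L}_D q]$. Thus any coboundary value at $[q]\wedge[q]\wedge[\Omega]$ has the form $\mathcal{L}_{D'}q$ for a derivation $D'$. If $8q$ were such a value we would have $\mathcal{L}_{D'}q=\mu q$ with $\mu\neq0$, which forces the $\kappa$-symmetric part of $D'$ to be a nonzero multiple of the identity, contradicting Cartan-$3$-regularity via \autoref{LPropertiesCartan3Regular}(2). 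Hence $c_3\neq0$, and neither the Chevalley--Eilenberg nor the Hochschild complex is $L_\infty$-formal.

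The main obstacle is precisely this last step. The difficulty is that a coboundary $\delta_\mathfrak{H}\chi$ generally brings in the adjoint action of degree-$1$ cohomology $\ChE[1](\mathfrak{g},\Sym\mathfrak{g})$ (Poisson vector fields / outer derivations), which is trivial only in the semisimple case (Whitehead's lemma, $\ChE[1]=0$). The device that makes the general argument work is to evaluate on the \emph{repeated} class $[q]\wedge[q]\wedge[\Omega]$ so that the output sits in polynomial degree $2$: there the only relevant degree-$1$ classes are honest derivations $\ChE[1](\mathfrak{g},\mathfrak{g})$, whose effect on the Casimir is exactly what \autoref{LPropertiesCartan3Regular}(2) controls. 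Evaluations landing in higher polynomial degree would involve $\ChE[1](\mathfrak{g},\Sym[k]\mathfrak{g})$ for $k\geq3$, on which there is no easy handle; choosing the degree-$2$ target circumvents this.
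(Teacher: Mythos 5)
Your proposal is correct and follows essentially the same route as the paper's proof: reduce to the Chevalley--Eilenberg complex via Theorem~\ref{TheoFormalityinStages}, choose the section $\phi_1$ and the primitive $\phi_2(\alpha,\beta\wedge\Omega)=2(\alpha'\beta)\wedge E$ from the Schouten identities, evaluate $z_3$ on $[q]\wedge[q]\wedge[\Omega]$ to get $8[q]$, and rule out exactness by showing a coboundary would produce a derivation whose $\kappa$-symmetric part is a nonzero multiple of the identity, contradicting Lemma~\ref{LPropertiesCartan3Regular}(2). The only cosmetic difference is that where you invoke ``polynomial-degree homogeneity'' to place $\chi([q],[\Omega])$ in $\ChE[1](\mathfrak{g},\mathfrak{g})$, the paper spells this out by decomposing the representing cocycle $D=\sum_r D_r$ by symmetric degree and isolating $D_1$.
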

\begin{proof}
	We shall show that the characteristic $3$-class $c_3$
	is nontrivial:\\
 	Let
 	$\alpha,\beta,\gamma\in \korps[q]$, upon writing 
 	$[\alpha]$ or $[\beta\wedge\Omega]$
 	for the $\delta_\mathfrak{g}$-cohomology classes represented by $\alpha$
 	and $\beta\wedge\Omega$, respectively.
 	From the Schouten brackets in 
 	(\ref{EqCompSchoutenOneOne}), (\ref{EqCompSchoutenBetaOmegaAlphaOne}), and
 	(\ref{EqCompSchoutenBetaOmegaGammaOmega}) which all give
 	$\delta$-coboundaries it is clear that the following
    graded Lie brackets in cohomology
 		 vanish: 
   \begin{equation}
 		 \big[[\alpha],[\beta]\big]_H=0,~~
 		 \big[[\alpha],[\beta\wedge\Omega]\big]_H=0,~~
 		 \big[[\beta\wedge\Omega],
 		 [\gamma\wedge\Omega]\big]_H=0.
 	\end{equation}
 	Next, we choose any graded vector space complement of the $\delta_\mathfrak{g}$-coboundaries
 	in the $\delta_\mathfrak{g}$-cocycles which includes 
 	the space of all $\alpha\in
 	\korps[q]$ and all
 	$\beta\wedge\Omega$, we get the resulting section $\phi_1:\mathfrak{H}\to \mathfrak{G}$ satisfying the natural condition $\phi_1([\alpha])=\alpha$
 	and $\phi_1([\alpha\wedge \Omega])=\alpha\wedge\Omega$
 	for all $\alpha\in \korps[q]
 	\subset (\Sym \mathfrak{g})^\mathfrak{g}$.
 	Then, according to eqn 
 	(\ref{EqCompShiftedLInfinityOrder2}),  eqs (\ref{EqCompSchoutenOneOne}), (\ref{EqCompSchoutenBetaOmegaAlphaOne}), and
 	(\ref{EqCompSchoutenBetaOmegaGammaOmega}) we can choose a
 	$\korps$-linear map $\phi_2:\Lambda^2\mathfrak{H}\to \mathfrak{G}$ of degree $-1$ satisfying
 	\begin{equation}
 	\phi_2(\alpha,\beta)=0~~\mathrm{and}~~~
 	\phi_2(\alpha,\beta\wedge\Omega)=2(\alpha'\beta)\wedge
 	E.
 	\end{equation} 
 	For later use we also note the following fact which will not be necessary in this proof:
 	\begin{equation}
 	\phi_2(\beta\wedge\Omega,\gamma\wedge\Omega) 
 	=2(\beta\gamma'-\gamma\beta')\wedge E\wedge \Omega.
 	\end{equation}
 	It follows that
 	the graded Chevalley-Eilenberg 
 	$\delta_\mathfrak{H}$-$3$-cocycle $z_3$ (which represents the characteristic $3$-class $c_3$ of 
 	the differential graded Lie algebra
 	$\mathfrak{G}=\ChEcochains(\mathfrak{g},\Sym \mathfrak{g})[1]$ and depends on $\phi_1$ and $\phi_2$,
 	see eqs
 	(\ref{EqDefWThree}) and (\ref{EqDefZThree})
 	takes the following values:
 	$z_3\big([\alpha],[\beta],
 	[\gamma]\big)=0$,  and, most importantly,
 	\begin{equation}
 	z_3\big([\alpha],[\beta],
 	[\gamma\wedge\Omega]\big) 
 	   =  8 [q\alpha'\beta'\gamma].
 	  \label{EqCompCharacteristicClassQuadratic003}
 	\end{equation}
 	Again, for later use and not necessary for this proof
 	we note that
 	\begin{equation}
 		z_3\big([\alpha],[\beta\wedge\Omega],
 		[\gamma\wedge\Omega]\big)  = 
 		-8\big[(q\alpha'(\beta\gamma'-\gamma\beta'))
 		\wedge\Omega\big].
 		\label{EqCompCharacteristicClassQuadratic033}
 	\end{equation}
 	 Finally, in case $c_3$ vanished there would be a graded
 $2$-form $\theta:\Lambda^2\mathfrak{H}\to \mathfrak{H}$
 (where $\mathfrak{H}=\ChE(\mathfrak{g},\Sym \mathfrak{g})[1]$)
 of degree $-1$ (since $z_3$ is of degree $-1$) such that
 $z_3=\delta_{\mathfrak{H}}\theta$. We evaluate 
$\delta_{\mathfrak{H}}\theta$ on the three elements
 $[\alpha],[\beta]$, and $[\gamma\wedge \Omega]$ of
 $\mathfrak{H}$. According to formula
 (\ref{EqCompGradedChevalleyEilenberg}) we need to know
 $\theta([\alpha],[\beta])$ (which must vanish since both
 $[\alpha]$ and $[\beta]$ are of degree $-1$ as is $\theta$)
 and $\theta([\alpha],[\gamma\wedge\Omega])$ which has to be of
 degree $0$, hence in $\ChE[1](\mathfrak{g},
 \Sym \mathfrak{g})[1]$. We consider the particular case $\alpha=q=\beta$ and $\gamma=1$. Let 
 $D\in \Hom(\mathfrak{g},\Sym \mathfrak{g})$ be a $\delta_\mathfrak{g}$-$1$-cocycle such that
 $[D]=\theta([q],[\Omega])$. Then the equation $z_3([q],[q],[\Omega])=
 (\delta_{\mathfrak{H}}\theta)([q],[q],[\Omega])$ would give
 (using the above eqn
  (\ref{EqCompCharacteristicClassQuadratic003}) and formula
  (\ref{EqCompGradedChevalleyEilenberg}))
  \begin{equation}
  8[q]  =  -2\big[ [q],[D]\big]_H =2[D(q)],
  \end{equation}
  since the Schouten bracket of a vector field with a function,
  $[f,X]_S$, equals $-X(f)$, and since $[\pi,q]_s=0$ we get
  $\big[[\pi,f]_s,q\big]_s=0$ for all $f\in\Sym \mathfrak{g}$
  showing that the last term in the above equation is 
  well-defined on the class $[D]$. Hence we would get the 
  equation 
  \begin{equation}
   D(q)=4q.
  \end{equation}
  Write $D=\sum_{r=0}^ND_r$ where for 
  each $r\in\mathbb{N}$ the component 
  $D_r\in \Hom (\mathfrak{g},\Sym[r]\mathfrak{g})$.
  Clearly, each $D_r$ must be
   a $\delta_\mathfrak{g}$-$1$-cocycle, hence $D_1:\mathfrak{g}\to\mathfrak{g}$ would be a derivation of the
   Lie algebra $\mathfrak{g}$, and comparing symmetric
   degrees we must have $D_1(q)=4q$. Elementary linear algebra
   (e.g.~expressing the previous equation in coordinates w.r.t.~a chosen base of $\mathfrak{g}$)
   gives for all $\xi,\xi'\in\mathfrak{g}$ the equation
   \[
        \kappa(D_1(\xi),\xi')+\kappa(\xi,D_1(\xi'))
        =4\kappa(\xi,\xi').
   \]
   This would show that the derivation 
   $D_1:\mathfrak{g}\to \mathfrak{g}$ has a $\kappa$-symmetric
   part equal to $2$ times the identity which is in contradiction to the hypothesis of $\big(\mathfrak{g},[~,~],\kappa\big)$ being 
   Cartan-$3$-regular, see the second statement of the preceding
   Lemma \ref{LPropertiesCartan3Regular}.
   Hence $c_3$ is a nontrivial class whence there is
   no formality.   
\end{proof}

\noindent The subclass of all Cartan-$3$-regular quadratic Lie algebras
includes also non semisimple Lie algebras whose derivations
are all antisymmetric, see e.g.~\cite{AB93}, for which there is
NO formality according to the preceding Proposition
\ref{PQuadraticLieCharacteristicClass}.

\subsubsection{\texorpdfstring{Reductive Lie algebras}{Reductive Lie algebras}}

Let $\big(\mathfrak{g},[~,~]\big)$ be a finite-dimensional
\emph{reductive Lie algebra}: recall that such a Lie algebra decomposes
into a direct sum $\mathfrak{g}=\mathfrak{z}\oplus [\mathfrak{g},\mathfrak{g}]$ where $\mathfrak{z}$ is its centre and the derived ideal $\mathfrak{l}=[\mathfrak{g},\mathfrak{g}]$
is a semisimple
Lie algebra, see e.g.~\cite{Jac62} for definitions.
Recall that every reductive Lie algebra is quadratic:
pick any nondegenerate symmetric bilinear form on 
$\mathfrak{z}$ and the \emph{Killing form} 
$(\xi,\xi')\mapsto
\mathrm{trace}(\mathrm{ad}_\xi\circ\mathrm{ad}_{\xi'})$ on
$\mathfrak{l}$, and define the nondegenerate invariant 
symmetric bilinear form $\kappa$ to be the orthogonal sum
of the two preceding ones. Note that the Cartan $3$-cocycle
$\Omega$ of $\mathfrak{g}$ is given by
$\Omega(z_1+l_1,z_2+l_2,z_3+l_3)
=\Omega_\mathfrak{l}(l_1,l_2,l_3)$ for any $z_1,z_2,z_3\in\mathfrak{z}$ and $l_1,l_2,l_3\in
\mathfrak{l}$ where $\Omega_\mathfrak{l}$ is the Cartan $3$-cocycle of $\mathfrak{l}$ which is well-known to
be a nontrivial $3$-cocycle
for $\mathfrak{l}$ if $\mathfrak{l}\neq \{0\}$ since all
derivations of a semisimple Lie algebra are well-known to
be inner hence antisymmetric w.r.t.~the Killing form.
It clearly is also a nontrivial $3$-cocycle for $\mathfrak{g}$.
Hence $\big(\mathfrak{g},[~,~],\kappa\big)$ is
Cartan-$3$-regular, and according to Theorem
\ref{PQuadraticLieCharacteristicClass} we get
\begin{prop}\label{PNonabelialReductiveNonformality}
	Let $\mathfrak{g}$ be a nonabelian reductive Lie algebra.\\
	Then the Chevalley-Eilenberg complex of $\mathfrak{g}$
	with values in $\Sym\mathfrak{g}$ (and hence the
	Hochschild complex of its universal enveloping
	algebra) is NOT formal in the $L_\infty$ sense.
\end{prop}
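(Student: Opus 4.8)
The plan is to reduce everything to \autoref{PQuadraticLieCharacteristicClass}: that theorem already yields non-$L_\infty$-formality of the Hochschild complex for any finite-dimensional Cartan-$3$-regular quadratic Lie algebra, and \autoref{TheoFormalityinStages} establishes that the $L_\infty$-formality of $\big(\ChEcochains(\mathfrak{g},\Sym\mathfrak{g})[1],\delta_\mathfrak{g},[~,~]_s\big)$ is equivalent to that of $\big(\Hochcochains(\UEA\mathfrak{g},\UEA\mathfrak{g})[1],b,[~,~]_G\big)$. Hence the only thing I actually have to verify is that a nonabelian reductive Lie algebra carries the structure of a Cartan-$3$-regular quadratic Lie algebra; everything else is then an application of results already proved.

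First I would equip $\mathfrak{g}=\mathfrak{z}\oplus\mathfrak{l}$ (centre plus the semisimple derived ideal $\mathfrak{l}=[\mathfrak{g},\mathfrak{g}]$, with $\mathfrak{l}\neq\{0\}$ by nonabelianness) with the orthogonal sum $\kappa=\kappa_\mathfrak{z}\perp\kappa_\mathfrak{l}$ of any nondegenerate symmetric bilinear form $\kappa_\mathfrak{z}$ on $\mathfrak{z}$ and the Killing form $\kappa_\mathfrak{l}$ on $\mathfrak{l}$; invariance and nondegeneracy of $\kappa$ are immediate, so $(\mathfrak{g},[~,~],\kappa)$ is quadratic. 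The next step is to check that the Cartan $3$-cocycle $\Omega$ is not a coboundary. Because $\mathfrak{z}$ is central and $\kappa(\mathfrak{z},\mathfrak{l})=0$, the cocycle $\Omega$ only sees the $\mathfrak{l}$-components and equals $p^*\Omega_\mathfrak{l}$, where $p:\mathfrak{g}\to\mathfrak{l}$ is the Lie algebra projection and $\Omega_\mathfrak{l}$ the Cartan cocycle of $\mathfrak{l}$.

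Consequently I would transfer nontriviality from $\mathfrak{l}$ to $\mathfrak{g}$ using the inclusion $\iota:\mathfrak{l}\hookrightarrow\mathfrak{g}$, which is a Lie algebra morphism with $p\circ\iota=\mathrm{id}_\mathfrak{l}$, so $\iota^*\Omega=\Omega_\mathfrak{l}$. If $\Omega=\delta_\mathfrak{g}\theta$ for some $2$-form $\theta$ (trivial coefficients suffice, since $\delta_\mathfrak{g}$ preserves the polynomial degree in $\Sym\mathfrak{g}$ and $\Omega$ has polynomial degree $0$), then pulling back along $\iota$ would give $\Omega_\mathfrak{l}=\delta_\mathfrak{l}(\iota^*\theta)$, contradicting the nontriviality of $\Omega_\mathfrak{l}$. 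That nontriviality for the semisimple $\mathfrak{l}$ I would derive from \autoref{LPropertiesCartan3Regular}(2): every derivation of a semisimple Lie algebra is inner, hence $\kappa_\mathfrak{l}$-antisymmetric, so its $\kappa_\mathfrak{l}$-symmetric part is zero and can never be a nonzero multiple of the identity; thus $\mathfrak{l}$ is Cartan-$3$-regular, \ie $[\Omega_\mathfrak{l}]\neq 0$.

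Having shown $[\Omega]\neq 0$, \ie that $(\mathfrak{g},[~,~],\kappa)$ is Cartan-$3$-regular, \autoref{PQuadraticLieCharacteristicClass} immediately gives the non-formality, and the equivalence in \autoref{TheoFormalityinStages} ties together the Chevalley-Eilenberg and Hochschild sides, completing the argument. I expect the only genuinely nonroutine point to be the step establishing $[\Omega_\mathfrak{l}]\neq 0$ via the absence of outer derivations and its translation through \autoref{LPropertiesCartan3Regular}(2); the construction of the quadratic structure and the pullback bookkeeping are formal once the splitting $\mathfrak{g}=\mathfrak{z}\oplus\mathfrak{l}$ is in place.
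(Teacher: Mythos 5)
Your proposal is correct and follows essentially the same route as the paper: equip $\mathfrak{g}=\mathfrak{z}\oplus\mathfrak{l}$ with the orthogonal sum of a nondegenerate form on the centre and the Killing form on $\mathfrak{l}$, observe that $\Omega$ only sees the $\mathfrak{l}$-components and is nontrivial because all derivations of a semisimple Lie algebra are inner hence $\kappa_\mathfrak{l}$-antisymmetric (\autoref{LPropertiesCartan3Regular}(2)), and then invoke \autoref{PQuadraticLieCharacteristicClass} together with \autoref{TheoFormalityinStages}. The only difference is that you make explicit, via the pullback along $\iota:\mathfrak{l}\hookrightarrow\mathfrak{g}$ and the polynomial-degree argument, the step the paper dismisses with ``it clearly is also a nontrivial $3$-cocycle for $\mathfrak{g}$'', which is a welcome precision rather than a deviation.
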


\noindent In case $\mathfrak{g}$ is semisimple it can be
shown (by the Whitehead Lemma and some standard representation theory) that the induced graded Lie bracket on cohomology
vanishes.

\subsubsection{\texorpdfstring{Lie algebra $\mathfrak{so}(3)$}{Lie algebra so(3)}}

The smallest semisimple Lie algebra is the Lie algebra
 $\mathfrak{so}(3)$ (isomorphic to $\mathfrak{sl}(2,\korps)$)
 which can be spanned by a basis $e_1,e_2,e_3$ subject to
 the brackets
 \[
  [e_1,e_2]=e_3,~~[e_2,e_3]=e_1,~~[e_3,e_1]=e_1,
 \]
 where all other brackets are clear from antisymmetry.
 The Killing form is given by $\kappa(e_i,e_j)=-2\delta_{ij}$.
 From the Whhitehead Lemma it is clear that
 \begin{equation}
 \mathfrak{H}=
 \ChE(\mathfrak{g},\Sym \mathfrak{g}) \cong \korps[q] \un \oplus \{0\} \oplus \{0\} \oplus \korps[q] \Omega
 \end{equation}
 where $\Omega$ is the Cartan $3$-cocycle.

As in the general semisimple case, the cohomology
$\ChE(\mathfrak{g},\Sym \mathfrak{g})$ does not inject
in the Chevalley-Eilenberg complex 
$\mathfrak{G}=\ChEcochains (\mathfrak{g,\Sym \mathfrak{g}})$
as a graded Lie subalgebra,
but we can define a smaller graded Lie subalgebra of $\mathfrak{G}$ which contains the cohomology, viz
\begin{equation}
 \mathfrak{G}_\text{red}\coloneqq \korps[q] \un \oplus \korps[q] E \oplus \korps[q] \pi \oplus \korps[q] \Omega
\end{equation}
where $E$ is the Euler field, $\pi$ is the linear Poisson structure.
\begin{prop}
	$\mathfrak{G}_\text{red}$  is a differential graded Lie subalgebra of $(\mathfrak{G},\delta,[~,~]_S)$, and there is a contraction
	\begin{equation} \label{so3-contraction}
	\begin{tikzpicture}[baseline=(current bounding box.center)]
	\matrix (m) [matrix of math nodes,nodes in empty cells,column sep=1em,text height=1.5ex,text depth=0.25ex]
	{\mathfrak{H} & & (\mathfrak{G}_{\text{red}},\delta) & \\};
	\path[right hook->]
	([yshift=-5pt]m-1-1.north east) edge node [above] {$i$} ([yshift=-5pt]m-1-3.north west);
	\path[->>]
	([yshift=5pt]m-1-3.south west) edge node [below] {$p$} ([yshift=5pt]m-1-1.south east);
	\draw[->] (m-1-4.north) arc (120:-120:2ex);
	\draw (m-1-4) ++(2em,0em) node {$h$};
	\end{tikzpicture}
	\end{equation}
	where $i$ is the natural injection, $p$ the natural projection (with kernel $\korps[q] E \oplus 
	\korps[q] \pi$ ), and the map
	$h$ is given by $h=h^1 : \korps[q] \pi \to \korps[q] E$, $h^1(\alpha \wedge\pi) = \alpha \wedge E$, for $\alpha \in \korps[q]$, and is defined to vanish in degree $-1,0,2$.\\
	The injection $\mathfrak{G}_\text{red}\to \mathfrak{G}$
	is a quasi-isomorphism of differential graded Lie algebras.
\end{prop}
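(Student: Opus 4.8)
The plan is to treat the three assertions — that $\mathfrak{G}_\text{red}$ is a differential graded Lie subalgebra, that the displayed triple $(i,p,h)$ forms a contraction, and that the inclusion is a quasi-isomorphism of differential graded Lie algebras — in turn, reducing everything to the Schouten-bracket table (\ref{EqCompSchoutenOneOne})--(\ref{EqCompSchoutenBetaOmegaGammaOmega}) together with two identities that are special to the three-dimensionality of $\mathfrak{so}(3)$. First I would record these two identities. Since $\dim\mathfrak{g}=3$, every element of $\Lambda^k\mathfrak{g}^*$ with $k>3$ vanishes, so $\pi\wedge\Omega=0$ (its $\Lambda$-degree would be $5$); in particular (\ref{EqCompSchoutenBetaOmegaGammaOmega}) gives $[\beta\wedge\Omega,\gamma\wedge\Omega]_s=0$ strictly, not merely up to a coboundary. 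The second identity is $\pi\wedge E=q\,\Omega$, which I would check by a short computation in the basis $e_1,e_2,e_3$: one finds $\pi\wedge E=(e_1^2+e_2^2+e_3^2)\otimes\epsilon^1\wedge\epsilon^2\wedge\epsilon^3$, while $\kappa(e_i,e_j)=-2\delta_{ij}$ gives $q=-\tfrac12\sum_i e_i^2$ and $\Omega=-2\,\epsilon^1\wedge\epsilon^2\wedge\epsilon^3$, whence $\pi\wedge E=q\,\Omega$.

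With these in hand, closure under $\delta$ is immediate: by (\ref{EqCompSchoutenOnePi}), (\ref{EqCompSchoutenPiAlphaE}), (\ref{EqCompSchoutenPiAlphaOmega}) the only nonzero differential on generators is $\delta(\alpha\wedge E)=\alpha\wedge\pi$, the pieces $\korps[q]\un$, $\korps[q]\pi$, $\korps[q]\Omega$ all being $\delta$-closed. For closure under $[~,~]_s$ I would run through the generator pairs, using (\ref{EqCompSchoutenEOne}), (\ref{EqCompSchoutenEOmega}), (\ref{EqCompSchoutenBetaOmegaAlphaOne}), the graded Leibniz rule, and $[E,E]_s=0$ (immediate from (\ref{EqDefSchoutenBracketFinDim}) with $F=G=E$). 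Most pairs land visibly in $\mathfrak{G}_\text{red}$ or vanish for degree reasons ($[\alpha\wedge\pi,\beta\wedge\Omega]_s$ has $\Lambda$-degree $4$, hence is zero). The one case that genuinely uses the low dimension — and which I expect to be the main obstacle — is $[\alpha\wedge E,\beta\wedge\Omega]_s$: expanding by Leibniz produces an intermediate term $2\alpha'\,\pi\wedge E$ which a priori does not lie in $\korps[q]\Omega$, but the identity $\pi\wedge E=q\,\Omega$ rewrites the whole bracket as $(2q\alpha\beta'-2q\alpha'\beta-3\alpha\beta)\,\Omega\in\korps[q]\Omega$. This confirms that $\mathfrak{G}_\text{red}$ is a differential graded Lie subalgebra.

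Next I would verify that $(i,p,h)$ is a contraction. The equality $p\circ i=\mathrm{id}_\mathfrak{H}$ and the side conditions $h^2=0$, $h\circ i=0$, $p\circ h=0$ hold by inspection, since $h$ is supported on $\korps[q]\pi$ with values in $\korps[q]E$, while $i$ and $p$ only involve the $\un$- and $\Omega$-parts. For the homotopy identity $\mathrm{id}-i\circ p=\delta h+h\delta$ I would check the four generator types: it reads $0=0$ on $\korps[q]\un$ and $\korps[q]\Omega$; on $\alpha\wedge E$ one gets $h\delta(\alpha\wedge E)=h(\alpha\wedge\pi)=\alpha\wedge E$; and on $\alpha\wedge\pi$ one gets $\delta h(\alpha\wedge\pi)=\delta(\alpha\wedge E)=\alpha\wedge\pi$ — precisely the projection onto the $E$- and $\pi$-parts (up to the sign fixed by the shift convention of Section~\ref{SubSec:Generalities}).

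Finally, for the quasi-isomorphism statement, the inclusion $\mathfrak{G}_\text{red}\hookrightarrow\mathfrak{G}$ is a morphism of differential graded Lie algebras by construction, so it suffices to see it induces an isomorphism on $\delta$-cohomology. Either from the contraction, or directly from the fact that $\delta\colon\korps[q]E\to\korps[q]\pi$, $\alpha\wedge E\mapsto\alpha\wedge\pi$, is bijective while the remaining pieces are $\delta$-closed and receive no coboundaries, one reads off $H(\mathfrak{G}_\text{red})=\korps[q]\un\oplus\korps[q]\Omega$. The inclusion sends the classes $[\alpha\wedge\un]$ and $[\alpha\wedge\Omega]$ to the corresponding classes in $\mathfrak{H}=\ChE(\mathfrak{g},\Sym\mathfrak{g})[1]$, and by the Whitehead computation recalled above, $\mathfrak{H}\cong\korps[q]\un\oplus\{0\}\oplus\{0\}\oplus\korps[q]\Omega$ with $\alpha\mapsto[\alpha\wedge\Omega]$ injective (Lemma~\ref{LPropertiesCartan3Regular}), these representatives already exhaust $\mathfrak{H}$. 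Hence the induced map on cohomology is a bijective morphism of graded Lie algebras, i.e.\ an isomorphism, and the inclusion is the asserted quasi-isomorphism of differential graded Lie algebras.
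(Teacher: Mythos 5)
Your proposal is correct and follows the same route as the paper, whose proof consists only of the remark that the claim ``follows from the identities (\ref{EqCompSchoutenOneOne})--(\ref{EqCompSchoutenBetaOmegaGammaOmega})''; you simply carry out the generator-by-generator verification that this remark compresses. Worth noting: your identity $\pi\wedge E=q\,\Omega$ (together with $\pi\wedge\Omega=0$ in dimension $3$) is genuinely needed to close the bracket $[\alpha\wedge E,\beta\wedge\Omega]_s$ inside $\korps[q]\Omega$, and it is not among the listed Schouten identities, so your write-up supplies a small but real missing ingredient of the paper's one-line argument.
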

\begin{proof}
	This follows from the identities 
	(\ref{EqCompSchoutenOneOne}) --
	(\ref{EqCompSchoutenBetaOmegaGammaOmega}).
\end{proof}


For this simple example we can use the $L_\infty$-Perturbation Lemma to compute an explict 
$L_\infty$-structure on the cohomology $\mathfrak{H}$ and an explicit $L_\infty$-quasi-isomorphism $e^{*\varphi}$ from
$\Sym (\mathfrak{H}[1])\to \Sym (\mathfrak{G}[1])$ with
$\varphi=\varphi_1+\varphi_2$ where $\varphi_1=i[1]$:

\begin{theorem} With the above notation we have the following:
	\begin{enumerate}
		\item 	The Chevalley-Eilenberg complex of $\mathfrak{so}(3)$ is NOT formal.
		\item There is a $L_\infty$ structure $d$ on $\Sym (\mathfrak{H}[1])$ whose only
		nonvanishing Taylor coefficient is $d_3$ (which can
		be given by 
		the shifted characteristic $3$-cocycle $z_3$, see eqn
		(\ref{EqDefZThree}), \ie $d_3=z_3[-1]$) for its
		only nonvanishing component (up to permutations).
		\item There is a $L_\infty$-quasi-isomorphism
		$e^{*\varphi}$ from 
		$\big(\Sym (\mathfrak{H}[1]), \overline{d_3}\big)$
		to \\
		$\big(\Sym (\mathfrak{G}_{\text{red}}[1]),
		\overline{\delta[1]}+ \overline{D_2}\big)$ (where
		$D_2$ denotes the shifted Schouten bracket).
		The only nonvanishing Taylor coefficients 
		of $e^{*\varphi}$ are
		$\varphi_1=i[1]$ and $\varphi_2$ which can explicitly
		 be given.
	\end{enumerate}
\end{theorem}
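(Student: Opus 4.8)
The plan is to obtain statement~(1) for free and to produce (2) and (3) by a single application of the $L_\infty$-Perturbation Lemma. Statement~(1) is the instance $\mathfrak{g}=\mathfrak{so}(3)$ of Theorem~\ref{PQuadraticLieCharacteristicClass}, a semisimple Lie algebra being Cartan-$3$-regular; alternatively it falls out of the computation below, which exhibits $d_3\neq 0$. For (2) and (3) I would feed the contraction \eqref{so3-contraction} into Theorem~\ref{Thm:PertBordemann}, taking as perturbation the shifted Schouten bracket $D'_V=D_2=[~,~]_s[1]$ on $\mathfrak{G}_{\text{red}}[1]$. Because $b_U=0$, the transferred coderivation $\overline{d}$ is minimal and, by that Theorem, $e^{*\varphi}$ and $e^{*\psi}$ are an $L_\infty$-quasi-isomorphism and a quasi-inverse; everything then reduces to locating the surviving Taylor coefficients.

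Here the main work is a bookkeeping of the homotopy-transfer trees produced by the geometric series in \eqref{EqDefInducedDifferentialHPTLInf} and \eqref{EqDefLInfinityPertLemmaInjection}: the arity-$k$ component is a sum over rooted binary trees with $k$ leaves decorated by $i[1]$, internal vertices by $D_2$ and internal edges by $\eta$, with the root decorated by $p[1]$ for $\overline{d}$ and by one further $\eta$ for $\varphi$. The decisive structural fact is that, by \eqref{EqDefHomotopyEta} and the definition of $h$ in \eqref{so3-contraction}, the homotopy $\eta$ is concentrated on $\korps[q]\pi$ and takes values in $\korps[q]E$; so every $\eta$-edge forces the subtree beneath it to carry a $\pi$-component and converts it into an $E$-component. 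Inspecting the brackets \eqref{EqCompSchoutenOneOne}--\eqref{EqCompSchoutenBetaOmegaGammaOmega} I would then observe that, among the inputs a tree can actually present --- namely $\un$- and $\Omega$-leaves together with $E$-edges, a $\pi$-output never surviving to become an input --- the unique bracket producing a $\pi$-component is $[\korps[q]\un,\korps[q]\Omega]_s$, forcing both of its arguments to be leaves.

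This collapses the trees. For $\overline{d}$ the root carries $p[1]$ and so must land in $\korps[q]\un\oplus\korps[q]\Omega$, i.e. be a bracket $[\,\cdot\,,E]_s$ fed by one leaf and one $E$-edge descending from a single bottom $[\un,\Omega]$-vertex: exactly three leaves. A root $[\alpha E,\beta E]_s$ (two bottom vertices, four leaves) lies in $\korps[q]E$ and is killed by $p[1]$, and with only two root-children no tree with five or more leaves exists; together with the vanishing of $d_2$ recorded in the proof of Theorem~\ref{PQuadraticLieCharacteristicClass}, this gives $d_k=0$ for $k\neq 3$. For $\varphi$ the root \emph{itself} carries $\eta$, so even the root vertex must output a $\pi$-component, hence is an $[\un,\Omega]$-bracket on two leaves and $k=2$; thus $\varphi_k=0$ for $k\geqslant 3$, and the surviving $\varphi_2$ is the single bottom-vertex term which by \eqref{EqCompPhiTwoPertShifted} equals $\varphi_2(y_1,y_2)=-\eta\big([i[1](y_1),i[1](y_2)]_s[1]\big)$, nonzero only on one $\un$- and one $\Omega$-argument and valued in $\korps[q]E$.

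It remains to identify the one surviving bracket with $z_3[-1]$, which I would do by projecting the order-$3$ morphism identity $P_3(\varphi)=0$ of \eqref{EqCompP3} (with $d_2=0$) to cohomology via $p[1]$: since $p[1]$ annihilates coboundaries and satisfies $p[1]\circ i[1]=\mathrm{id}$, the term $b[1]\circ\varphi_3$ disappears and the remaining $D_2$-contributions are exactly $w_3(\phi)$ of \eqref{EqDefWThree}, so that $d_3=(p\circ w_3)[-1]=z_3[-1]$ by \eqref{EqDefZThree}, with $z_3\neq 0$ by \eqref{EqCompCharacteristicClassQuadratic003}. The hard part throughout is purely combinatorial: once one notices that $\eta$ only trades a $\pi$ for an $E$ and that $[\korps[q]\un,\korps[q]\Omega]_s$ is the sole bracket that can refill the $\pi$-slot, the arity bounds and the reduction to $d_3,\varphi_1,\varphi_2$ follow mechanically, and the only genuine checks left are the signs and normalising constants in $\varphi_2$ and $d_3$.
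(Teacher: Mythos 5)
Your proposal is correct and follows essentially the same route as the paper: apply the $L_\infty$-Perturbation Lemma to the contraction \eqref{so3-contraction} and kill all higher Taylor coefficients by observing that $\eta$ only converts a $\pi$-factor into an $E$-factor and that $[\korps[q]\un,\korps[q]\Omega]_s$ is the only bracket that can produce such a $\pi$-factor; your tree bookkeeping is the paper's word-by-word counting of factors of degrees $-2,-1,1$ in different clothing. The only (harmless) variation is that you identify $d_3=z_3[-1]$ abstractly from the order-$3$ morphism identity, whereas the paper evaluates $d_3$ directly from the geometric series and recognises the values \eqref{EqCompCharacteristicClassQuadratic003}--\eqref{EqCompCharacteristicClassQuadratic033}.
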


\begin{proof}
	\textbf{1.} This is a particular case of Proposition
	\ref{PNonabelialReductiveNonformality}.\\
	\textbf{2.} and \textbf{3.}
	We compute the formulas from the
	$L_\infty$-Perturbation Lemma, see eqs
	(\ref{EqDefInducedDifferentialHPTLInf}) and (\ref{EqDefLInfinityPertLemmaInjection}) which we give in terms of the geometric series:
	\begin{eqnarray*}
	   d & = & 
	       \sum_{r=0}^\infty\mathrm{pr}_{\mathfrak{H}[1]}
	       \circ \overline{D_2}\circ
	       (-\eta\circ\overline{D_2})^r\circ e^{*\varphi_1}
	       ,\\
	   \varphi & = &  \sum_{r=0}^\infty
	      \mathrm{pr}_{\mathfrak{G}[1]} 
	       \circ (-\eta\circ\overline{D_2})^r\circ e^{*\varphi_1}.
	\end{eqnarray*}
	In order to understand 
	--for any nonnegative integer $r$--
	the iterated product 
	$(-\eta\circ \overline{D_2})^r$, we shall apply
	$-\eta\circ \overline{D_2}$
	to a graded symmetric word containing $k+m+l$ letters or terms of the following kind: $k$ times a term
	of degree $-2$, i.e.~of the form $\alpha\in \korps[q]$,
	$l$ times a term of degree $1$, i.e. of the form
	$\beta\wedge\Omega\in \korps[q]\wedge \Omega$, and
	$m$ times a term of degree $-1$, i.e.of the form
	$\gamma\wedge E\in \korps[q]\wedge E$: the application
	of the
	shifted Schouten bracket $\overline{D_2}$
	will produce two sums of linear combinations of graded symmetric words; the first type of words containing $k-1$ terms of degree
	$-2$, $m$ terms of degree $-1$, one term of degree $0$ proportional to $\tilde{\gamma}\wedge\pi$ with $\tilde{\gamma}\in \korps[q]$
	(which comes from the Schouten bracket of a degree $-2$ term and a degree $1$-term), and
	$l-1$ terms of degree $1$; the second type of words containing
	$k$ terms of degree $-2$, $m-1$ terms of degree $-1$, and $l$ terms of degree $1$ (which come from Schouten brackets
	involving at least one Euler field). An ensuing application of $\eta$
	shifts the terms proportional to $\pi$ (in the first sum)
	to a term proportional
	to $E$ and kills the second sum (since it obviously is in the kernel
	of the graded biderivation $\overline{h}$).
	As a result we get a linear combination of graded symmetric words containing $k-1$ terms of degree $-2$, $m+1$ terms of
	degree $-1$, and $l-1$ terms of degree $1$.
	By induction, and $r$-fold iteration yields words
	with $k-r$ terms of degree $-2$, $m+r$ terms of degree $-1$,
	and $l-r$ terms of degree $l-r$.\\
	In the above formulas for $d$ and $\varphi$ we have $m=0$ since the expressions are applied to words containing letters in the cohomology. It follows that for all integers
	$r\geq 2$ there will be $r\geq 2$ factors of type
	$\tilde{\gamma}\wedge E$: application of the projection
	$\mathrm{pr}_{\mathfrak{G}[1]}$ will kill these terms
	because there are at least two factors. It follows that
	there are only two surviving Taylor coefficients of
	$\varphi$: $\varphi_1$ (the case $r=0$) and 
	$\varphi_2$ (the case $r=1$). 
	Computing on arguments $\alpha \un, \beta \omega$ in $\mathfrak{G}[1]$, with $\alpha,\beta \in \korps[q]$, we obtain
	\begin{equation*}
	\varphi_2(\alpha \un,\beta \un) = 0, \qquad \varphi_2(\alpha \un,\beta \omega) = \alpha' \beta E \qquad \text{and} \qquad \varphi_2(\alpha \omega,\beta \omega) = 0.
	\end{equation*}
	On the other hand, for each integer $r\geq 2$ an application
	of the shifted Schouten bracket $\overline{D_2}$ will leave
	at least one factor of the type $\tilde{\gamma}\wedge E$
	which is in the kernel of the projection to cohomology,
	$\mathrm{pr}_{\mathfrak{H}[1]}$. It follows that all
	Taylor coefficients
	$d_{r+2}$ of $d$ vanish for $r\geq 2$, and the shifted induced bracket on cohomology, $d_2$ (the case $r=0$),
	vanishes thanks to fact that the induced graded Lie
	bracket on cohomology vanishes for semisimple Lie algebras 
	. Hence the only remaining
	Taylor coefficient is $d_3$ (the case $r=1$) which is of the
	form
	\begin{eqnarray*}
		d_3(\alpha\un,\beta\un,\gamma\wedge\Omega)
		& = & 8q\alpha'\beta'\gamma,\\
	    d_3(\alpha\un,\beta\wedge \Omega,\gamma\wedge\Omega)
	    &=&-8(q\alpha'(\beta\gamma'-\gamma\beta'))\wedge\Omega.
	\end{eqnarray*}
\end{proof}

\subsection{Heisenberg algebra}

We consider the three-dimensional Heisenberg Lie algebra
whose underlying vector space is $\korps^3$ with basis
$x,y,z$, and the only nonvanishing bracket is given by
$[x,y]=z=-[y,x]$.
or, writing the Lie bracket as a bivector $[~,~] = \pi = z \partial_x \wedge \partial_y \in \Hom\big(\Lambda^2 \mathfrak{g},\mathfrak{g}\big)$. Although the Lie bracket is simpler than the one of $\mathfrak{so}(3)$, the cohomology is more complex, and is not abelian. 

In \cite{E14}, the cohomology of the Chevalley-Eilenberg complex $\ChEcochains(\mathfrak{g},\Sym\mathfrak{g})$ has been computed and shown that it is not formal. 

\section{Free Lie algebra} \label{Sec:Free-Lie-alg}

We shall closely follow Chapitre 3 of the thesis \cite{El12}.

Let $V$ be a vector space over $\korps$ and $\mathcal{L}V$ the associated free Lie algebra. Then its universal enveloping algebra is well-known to be
isomorphic to the free associative algebra
\begin{equation*}
\UEA(\mathcal{L}V) \cong \Tens V.
\end{equation*}
Here the Hochschild cohomology can be computed using a free resolution, see e.g.~\cite[Chap.IX p.181]{CE56}, and is only concentrated in degree $0$ and $1$, composed of its centre 
$\Tens V^{\Tens V}$ and the space of all outer derivations:
\begin{align*}
\Hoch(\Tens V,\Tens V) & {}\cong \Tens V^{\Tens V} \oplus \Der(\Tens V,\Tens V)/\operatorname{Inder}(\Tens V,\Tens V) 
\end{align*}
where, as usual, $\Der(\Tens V,\Tens V )$ denotes the Lie
algebra of all derivations of the algebra $\Tens V$, and $\operatorname{Inder}(\Tens V,\Tens V)$
denotes the space of all inner derivations, which are adjoint
representations $\mathrm{ad}_x:\Tens V\to\Tens V$ for all
$x \in \Tens V$ defined by
$\mathrm{ad}_x(y)=xy-yx$ for all $y \in \Tens V$.
Note that $\mathrm{ad}_x=b(x)$ for the Hochschild coboundary $b$. We shall sometimes denote the quotient Lie algebra
$\Der(\Tens V,\Tens V)/\operatorname{Inder}(\Tens V,\Tens V)$ 
by $\mathfrak{outder}$, and shall
again write $\mathfrak{G}$ for the graded Lie algebra
$\big(C_H(\Tens V,\Tens V)[1],[~,~]_G,b\big)$.\\

For $V$ of \textbf{dimension 0}, $\Tens \{0\} \cong \korps$, the centre is isomorphic to the field $\Tens V^{\Tens V} \cong \korps$, and \textbf{there is a formality map} since the Hochschild cohomology injects as a graded abelian Lie subalgebra in the Hochschild complex
\begin{equation*}
\varphi_1 = id_\korps : \korps \to \bigoplus_{k \in\nat} \Hom(\korps^{\otimes k},\korps) \cong \bigoplus_{k \in\nat} \korps, \qquad \varphi_k = 0\ \text{for $k\geqslant 2$}.
\end{equation*}

For $V$ of \textbf{dimension 1} we can write $V=\korps e$ (having fixed a base vector $e$ of $V$), and $\Tens(\korps e) \cong \korps[x]$ is the commutative ring of polynomials in one variable, hence it is also equal to the center $\Tens V^{\Tens V} = \korps[x]$, and all inner derivations vanish. The space $\Hom(\korps e,\korps[x])=\{f \partial_x~|~ f \in \korps[x]\}$ is the Lie algebra of vector fields. \textbf{There is} again 
\textbf{a formality map} induced by the inclusion $\varphi_1$ of the Hochschild cohomology into the Hochschild complex as a graded Lie subalgebra, $\varphi_k = 0$ for $k\geqslant 2$. The map $\varphi_1$ is the identity on the center, and associates to $f \partial_x$ its derivation.

The truly interesting and more involved case is of course
given by $V$ of \textbf{dimension} 
$\mathbf{\geqslant 2}$. The zeroth cohomology group, i.e.~the
centre of $\Tens V$, is then well-known to be reduced to $\Tens V^{\Tens V} = \korps \un$. The graded Lie bracket $[~,~]_H$
on the cohomology $\mathfrak{H}=\korps \un \oplus \mathfrak{outder}$ is readily
computed by
\begin{equation}
  \big[(\lambda\un, D),(\lambda'\un, D')\big]_H
     = \big(0,[D,D']\big)
\end{equation}
for any $\lambda,\lambda'\in\korps$ and $D,D'\in \mathfrak{outder}$. Since the cohomology graded Lie algebra
is concentrated in degree $-1$ and $0$, its shift 
$\mathfrak{H}[1]$ is concentrated in degree $-2$ and $-1$.
Counting degrees we immediately get the following
\begin{theorem}\label{TFreeSigma}
	There is a $L_\infty$-structure $d$ on
	$\Sym (\mathfrak{H}[1])$ whose Taylor coefficients
	$d_n$ vanish for all integers $n\geq 4$,
	and $d_2=[~,~]_H[1]$.
	Moreover there is a $L_\infty$-quasi-isomorphism 
	$e^{*\varphi}$ from 
	$\big(\Sym (\mathfrak{H}[1]),
	\overline{d_2}+\overline{d_3}\big)$
	to  $\big(\Sym(\mathfrak{G}[1],\overline{b[1]}+
	\overline{D_2}\big)$ whose Taylor coefficients
	$\varphi_n$ vanish for all $n\geq 3$.\\
	 Finally, the map $d_3[-1]: \Lambda^3(\mathfrak{H}) 
	 \to \mathfrak{H}$ (which is of degree $-1$) can be 
	 seen as a
	 a scalar $3$-cocycle $\sigma$ of the Chevalley-Eilenberg cohomology $\ChEcochains\big(\mathfrak{outder},\korps\big)$ of the
	 Lie algebra of all outer derivations of $\Tens V$.
\end{theorem}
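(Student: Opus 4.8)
The plan is to produce the $L_\infty$-structure $d$ and the morphism $e^{*\varphi}$ from the homotopy transfer machinery, and then to extract every vanishing statement and the cocycle property from a single degree count, exploiting that $\mathfrak{H}[1]$ is concentrated in just two degrees. First I would record the degrees. Since $\Hoch(\Tens V,\Tens V)$ lives in Hochschild degrees $0$ (the centre $\korps\un$) and $1$ (the outer derivations $\mathfrak{outder}$), the space $\mathfrak{H}[1]$ is concentrated in degree $-2$ (carrying $\korps\un$) and degree $-1$ (carrying $\mathfrak{outder}$), whereas $\mathfrak{G}[1]=\Hochcochains[2]$ has lowest degree $-2$, coming from $\Hochcochains[0]=\Tens V$. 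The existence of a minimal $L_\infty$-structure $(\mathfrak{H},d)$ with $d_2=[~,~]_H[1]$, together with an $L_\infty$-quasi-isomorphism $e^{*\varphi}$ from $\big(\Sym(\mathfrak{H}[1]),\overline{d}\big)$ to $\big(\Sym(\mathfrak{G}[1]),\overline{b[1]+[~,~]_G[1]}\big)$, is guaranteed by homotopy transfer: one picks a homotopy contraction of $(\mathfrak{G},b)$ onto its cohomology $\mathfrak{H}$ and invokes the Corollary to \autoref{Thm:PertBordemann}.

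Next I would carry out the bookkeeping. A Taylor coefficient $d_n\colon\Sym[n](\mathfrak{H}[1])\to\mathfrak{H}[1]$ has degree $1$; on a product of $a$ factors from degree $-1$ and $b$ factors from degree $-2$ (with $a+b=n$) the source has degree $-(a+2b)=-(n+b)$, so the target degree is $-(n+b)+1$. Landing in $\{-2,-1\}$ forces $n+b\in\{2,3\}$, which is impossible for $n\geqslant 4$; hence $d_n=0$ for $n\geqslant 4$. For $\varphi_n\colon\Sym[n](\mathfrak{H}[1])\to\mathfrak{G}[1]$, of degree $0$, the target degree $-(n+b)\geqslant -2$ forces $n+b\leqslant 2$, impossible for $n\geqslant 3$; hence $\varphi_n=0$ for $n\geqslant 3$. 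The same count pins down $d_3$: the only admissible case is $b=0$, i.e.~three arguments in $\mathfrak{outder}$ with target of degree $-2$, that is in the centre $\korps\un$. Thus $d_3[-1]\colon\Lambda^3\mathfrak{H}\to\mathfrak{H}$ has a single nonzero component $\Lambda^3\mathfrak{outder}\to\korps\un\cong\korps$, which I take to be $\sigma$.

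It remains to show that $\sigma$ is a Chevalley-Eilenberg $3$-cocycle of $\mathfrak{outder}$ with trivial coefficients. As $d=d_2+d_3$, the identity $\overline{d}^2=0$ is equivalent to $[d,d]_{NR}=0$, which splits by arity into $[d_2,d_2]_{NR}=0$, $[d_2,d_3]_{NR}=0$, and $[d_3,d_3]_{NR}=0$. The last vanishes automatically, since any composite $d_3\circ_{NR}d_3$ feeds the degree-$(-2)$ output of the inner $d_3$ into the outer $d_3$, which the degree count annihilates. The middle relation $[d_2,d_3]_{NR}=0$ is, after the shift $[-1]$, precisely $\delta_{\mathfrak{H}}\big(d_3[-1]\big)=0$, because by construction $[d_2,-]_{NR}$ shifts to the graded Chevalley-Eilenberg differential. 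Evaluating this on four elements of $\mathfrak{outder}$ using formula (\ref{EqCompGradedChevalleyEilenberg}) with $k=3$, the first (bracket) sum vanishes term by term, as each inner value $d_3[-1](\dotsc)$ lies in the central $\korps\un$ on which $[~,~]_H$ is trivial, while the second sum reproduces exactly the trivial-coefficient differential $\delta_{\mathfrak{outder}}\sigma$. Hence $\delta_{\mathfrak{outder}}\sigma=0$.

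The main obstacle is not a hard estimate but the careful tracking of the bigraded shift and sign conventions: one must verify that $[d_2,d_3]_{NR}=0$ really transports to the honest scalar cocycle equation, and that the centrality of $\korps\un$ genuinely kills the adjoint-action part of $\delta_{\mathfrak{H}}$, leaving only the combinatorial Lie-algebra differential of $\mathfrak{outder}$.
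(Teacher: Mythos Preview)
Your proposal is correct and follows essentially the same approach as the paper: existence via homotopy transfer, vanishing of $d_n$ ($n\geqslant 4$) and $\varphi_n$ ($n\geqslant 3$) by the identical degree count, and identification of the sole surviving component of $d_3[-1]$ as a map $\Lambda^3\mathfrak{outder}\to\korps\un$. Your treatment of the cocycle property is in fact more explicit than the paper's, which simply asserts that $d_3[-1]$ is a graded Chevalley--Eilenberg $3$-cocycle and then invokes centrality of $\korps\un$; you unpack this via $[d_2,d_3]_{NR}=0$ and the observation that the adjoint-action terms of $\delta_{\mathfrak{H}}$ vanish on central values, which is exactly the content behind the paper's one-line claim.
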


\begin{proof}
	The $L_\infty$ structure $d=\sum_{n=2}^\infty d_n$ 
	on $\Sym (\mathfrak{H}[1])$ and the $L_\infty$-map
	$e^{*\varphi}$
	exist by the general arguments given in the preceding 
	sections, for instance thanks to the fact that we
	can always find a chain homotopy to relate
	$\mathfrak{H}$ and $(\mathfrak{G},b)$  in context of a deformation retract and using Theorem \ref{Thm:PertBordemann}.\\
	Since $d$ is of degree $1$, and $\varphi$ is of degree
	$0$ we get for all integers
	$k\geq 1$ and $i_1,\ldots,i_k\in\{-2,-1\}$ that
	$d_k(\mathfrak{H}[1]^{i_1}\bullet\cdots\bullet
	\mathfrak{H}[1]^{i_k}) \subset \mathfrak{G}_\text{red}[1]^{i_1+\dotsb+i_k+1}$
	and
	  $\varphi_k(\mathfrak{H}[1]^{i_1}\bullet\cdots\bullet
	    \mathfrak{H}[1]^{i_k}) \subset \mathfrak{G}_\text{red}[1]^{i_1+\dotsb+i_k}$. 
	    For $k\geq 4$ we have $i_1+\cdots+i_k+1\leq -3$ 
	    and for all $k\geq 3$ we have $i_1+\cdots+i_k\leq -3$,
	    whence $d_k=0$ for all $k\geq 4$ and $\varphi_k=0$
	    for all $k\geq 3$.\\
	    $d_3[-1]$ is a graded Chevalley-Eilenberg $3$-cocycle
	    of degree $-1$: since $\mathfrak{H}^{-1}\cong \korps \un$ is central, it follows that all other components
	     of $d_3[-1]$ are reduced to zero with the possible exception of the restriction of $d_3[-1]$ to
	     three arguments in $\mathfrak{outder}=\mathfrak{H}^0$
	     whose image is in $\mathfrak{H}^{-1}\cong \korps \un$.
	     This surviving component can be seen as an ungraded scalar $3$-cocycle $\sigma: 
	     \Lambda^3\mathfrak{outder}\to \korps$ of the ungraded Lie algebra $\mathfrak{outder}$.
\end{proof}

In order to check formality we have to check whether the 
aforementioned $3$-cocyle $\sigma$ can be a coboundary, and this requires
some more explicit computations:

The Hochschild $1$-cocycles of $\Tens V$ comprise the space of all derivations 
$\Der(\Tens V,\Tens V)$ of
$\Tens V$: since every derivation is uniquely determined by its
restriction to the space of generators $V$, and in turn
every linear map $\psi:V\to \Tens V$ uniquely extends to 
a derivation by the Leibniz rule there is a linear isomorphism
$\overline{(~)}: \Hom(V,\Tens V)\to 
\Der(\Tens V,\Tens V)\subset \Hom(\Tens V,\Tens V)$ defined by
\begin{equation}
 \overline{\psi}(\un)=0,~~~
\overline{\psi}(x_1\cdots x_n)=\sum_{r=1}^nx_1\cdots x_{r-1}
     \big(\psi(x_r)\big)x_{r+1}\cdots x_n.
\end{equation}
for all $x_1,\ldots,x_n\in V$.
We shall sometimes denote $\Hom(V,\Tens V)$ by
 $\mathfrak{der}$. As for the
coderivations we can pull-back the usual Lie bracket of derivations from $\Der(\Tens V,\Tens V)$ to a Lie bracket $[~,~]_D$ on the space $\mathfrak{der}$ by means of the linear isomorphism $\overline{(~)}$: for any 
$\psi, \chi \in \Hom(V,\Tens V)$ we compute
\begin{equation}
[\psi,\chi]_D \coloneqq \overline{\psi} \circ\chi - \overline{\chi}\circ\psi,
\end{equation}
and $\overline{(~)}$ is a morphism of Lie algebras, 
i.e.~$\overline{[\psi,\chi]_D}
=[\overline{\psi},\overline{\chi}]$.
Moreover we shall write $b':\Tens V\to \Hom(V,\Tens V)$
for the restriction of the adjoint representation
$b(x)=\mathrm{ad}_x$ to $V$ (for all $x\in\Tens V$).
We have $\overline{b'(x)}=b(x)$, hence 
$b'(xy-yx)=[b'(x),b'(y)]_D$.
It follows that $b'\Tens V$ is an ideal of the Lie algebra
$\big(\Hom(V,\Tens V),[~,~]_D\big)$.\\
The space $\Hom(V,\Tens V)$ carries an additional $\intg$-grading (called \emph{tensor grading})
according to the degree
\(
\Hom(V,\Tens V)^{(k)} = \Hom(V,V^{\otimes k+1}),
\)
for $k\geq -1$, and $\{0\}$ for $k\leq -2$.
The tensor grading is auxiliary, no signs are attached. The space $b'\Tens V^+$ also carries the degree of $\Tens V^+$, ${\Tens V^+}^{(k)} = V^{\otimes k}$ for $k \geqslant 1$. In the first degrees, we have $\Hom(V,\Tens V)^{(-1)} = \Hom(V,\korps) = V^\star$, $(b\Tens V^+)^{(-1)} = \{0\}$, and 
$\Hom(V,\Tens V)^{(0)} = \Hom(V,V)$, $(b'\Tens V^+)^{(0)} = \{0\}$. Note that brackets and $b'$ are of tensor degree
$0$, whence the cohomology $\korps\un\oplus \mathfrak{outder}$
is in addition graded by the tensor degree.

As for the Lie algebra $\mathfrak{so}(3)$ we can now define
a smaller differential graded Lie algebra 
$\mathfrak{G}_{\text{red}}$ which injects in the Hochschild complex of $\Tens V$ as a differential graded subalgebra, viz.~
\begin{equation}
   \mathfrak{G}_{\text{red}} 
   =\mathfrak{G}^{-1}_{\text{red}}\oplus
   \mathfrak{G}^{0}_{\text{red}}\coloneqq \Tens V \oplus \Hom(V,\Tens V) 
\end{equation}
equipped with the rather simple graded Lie bracket
(where $x,y\in \Tens V$, $\psi,\chi\in\mathfrak{der}$ 
and we write ordered pairs $(x,\psi)$ for elements of the
direct sum $\Tens V \oplus \Hom(V,\Tens V)$)
\begin{equation}\label{EqDefReducedSchoutenBracketFreeLie}
    \big[(x,\psi),(y,\chi)\big]_{\text{red}}:= 
    \big(\overline{\psi}(y)-\overline{\chi}(x),
       [\psi,\chi]_D\big),
\end{equation}
and differential
 $b_{\text{red}}(x,\psi)\coloneqq\big(0,b'(x)\big)$.
It is easy to see that the injection $(x,\psi)\mapsto
x+\overline{\psi}$ is a quasi-isomorphism of differential
graded Lie algebras $\big(\mathfrak{G}_{\text{red}},
[~,~]_{\text{red}},b_{\text{red}}\big)\to
\big(\mathfrak{G},[~,~]_G, b\big)$.

Next we would like to define a chain homotopy $h$ in
$\mathfrak{G}_{\text{red}}$. To this end we first choose
a complementary subspace $\mathcal{H}^0\subset \Hom(V,\Tens V)$
to the space of all coboundaries $b'\Tens V$, i.e.~restrictions of inner derivations, in $\Hom(V,\Tens V)$ in the following
way: we can suppose that it is graded (with respect to the tensor degree), i.e.~$\mathcal{H}^0 = 
\bigoplus_{n \geqslant -1} \mathcal{H}^{0(n)}$, and we set $\mathcal{H}^{0(-1)} = V^\star$ (dual space of $V$), $\mathcal{H}^{0(0)} = \Hom(V,V)$, and for $k\geqslant 1$, we choose in each $\Hom(V,V^{\otimes k+1})$ a complementary subspace $\mathcal{H}^{0(k)}$ to the inner derivations, \ie $\Hom(V,V^{\otimes k+1}) = \mathcal{H}^{0(k)} \oplus 
(b'\Tens V^+)^{(k)}$. For the component
of degree $-1$ of $\mathfrak{G}_{\text{red}}$, $\Tens V$,
we have the natural decomposition $\Tens V= \korps\un \oplus
\Tens V^+$, the latter being the augmentation ideal of
$\Tens V$, \ie the sum of all elements of strictly positive tensor degree. Hence we set $\mathcal{H}^{-1}=\Tens V^+$
which also carries a second grading according to tensor
degree, $\mathcal{H}^{-1(k)}={\Tens}^k V$ for all integers
$k\geq 1$.
For each integer $k\geq -1$
let $P_k : \Hom(V,V^{\otimes k+1}) \to \mathcal{H}^{0(k)}$ be the canonical projection having kernel $(b'\Tens V)^k$.
We set $P =\sum_{k \geqslant -1} P_k$.
Hence for each integer $k\geq -1$ the linear map
$id_{\mathfrak{der}^k} - P_k$ (which vanishes for $k=-1,0$) is a projection onto
$b'\Tens V$ which is in bijection with $\Tens V^+$ via 
$b'$. Using the inverse of this bijection there is,
for each integer
$k\geqslant -1$, a unique linear map 
$Q_k : \Hom(V,V^{\otimes k+1}) \to V^{\otimes k}$ be such that 
$id_{\mathfrak{der}^k} - P_k = bQ_k$. Setting
$Q_{-1} = 0$, $Q_0 = 0$, $Q= \sum_{k \geqslant 1} Q_k$, we define the chain homotopy
$h:\mathfrak{G}_{\text{red}}\to\mathfrak{G}_{\text{red}}$ 
\begin{equation}\label{EqDefFreeAlgebraHomotopyGeneral}
       h(x,\psi) = \big(Q(\psi),0\big)
\end{equation}
 for all 
$\psi\in\Hom(V,\Tens V)$ and $x\in\Tens V$.
Moreover, since the restriction of the natural projection $\Hom(V,\Tens V)\to
\mathfrak{outder}$ to the subspace $\mathcal{H}^{0}$ of
$\Hom(V,\Tens V)$ is a bijection, its inverse gives an injection $\mathfrak{outder}\to \mathcal{H}^{0}\subset \Hom (V,\Tens V)$ which, combined with the canonical injection
$\korps\un\to \Tens V$, defines an injection $i$ of the
cohomology $\mathfrak{H}$ into $\mathfrak{G}_{\text{red}}$.
On the other hand, the natural projection $\Hom(V,\Tens V)\to
\mathfrak{outder}$ combined with the canonical projection
$\Tens V\to \korps \un$ (having kernel $\Tens V^+$) defines
a surjection $p:\mathfrak{G}_{\text{red}}\to \mathfrak{H}$.
Obviously, $p\circ i=\mathrm{id}_\mathfrak{H}$ and
$i\circ p$ equals $P$ on $\mathfrak{G}^0_{\text{red}}
=\Hom(V,\Tens V)$ and the projection onto $\korps\un\subset
\Tens V=\mathfrak{G}^{-1}_{\text{red}}$. It follows
that there is a contraction
\begin{equation*}
\begin{tikzpicture}[baseline=(current bounding box.center)]
\matrix (m) [matrix of math nodes,nodes in empty cells,column sep=1em,text height=1.5ex,text depth=0.25ex]
{\mathfrak{H} & & (\mathfrak{G}_{\text{red}},b) & \\};
\path[right hook->]
([yshift=-5pt]m-1-1.north east) edge node [above] {$i$} ([yshift=-5pt]m-1-3.north west);
\path[->>]
([yshift=5pt]m-1-3.south west) edge node [below] {$p$} ([yshift=5pt]m-1-1.south east);
\draw[->] (m-1-4.north) arc (120:-120:2ex);
\draw (m-1-4) ++(2em,0em) node {$h$};
\end{tikzpicture}
\end{equation*}
The graded $\mathfrak{H}$-$3$-cocyle $\sigma_3$
(see the preceding Theorem \ref{TFreeSigma}) is surprisingly
simple:
\begin{prop}\label{PSigmaGeneralComplement}
 The graded $\mathfrak{H}$-$3$-cocyle $\sigma_3$ 
 from $\Lambda^3 \mathfrak{outder}\to \korps$ (defined
 in Theorem \ref{TFreeSigma}) is of tensor degree zero.
 For any $\alpha,\beta\in \mathfrak{outder}^{(-1)}=V^*$,
 $A,B,C\in \mathfrak{outder}^{(0)}=\Hom(V,V)$, $\rho\in
 \mathfrak{outder}^{(1)}$, and $\psi\in
 \mathfrak{outder}^{(2)}$ we get
 \begin{eqnarray}
   \sigma(A,B,C) & = & 0 , \label{EqCompSigma000}\\
    \sigma(\alpha, B, \rho) & = &
      -\alpha\big(
         Q_1([B,\rho]_D)\big),
        \label{EqCompSigma-101} \\
   \sigma(\alpha, \beta, \psi) & = &
   -\alpha\big(
   Q_1([\beta,\psi]_D)\big)+\beta\big(
   Q_1([\alpha,\psi]_D)\big),
   \label{EqCompSigma-1-12} 
 \end{eqnarray}
 whereas all other components of $\sigma$ (which are
 no permutations of the above) vanish. 
\end{prop}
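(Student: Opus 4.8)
The plan is to compute $\sigma_3=d_3[-1]$ directly from the $L_\infty$-Perturbation Lemma, exactly in the spirit of the $\mathfrak{so}(3)$ computation. By Theorem~\ref{TFreeSigma} the only surviving higher bracket is $d_3$, and since $\varphi$ has no Taylor coefficients beyond $\varphi_2$, the geometric series \eqref{EqDefInducedDifferentialHPTLInf} collapses to its term of order $r=1$. Writing $D_2$ for the shift of the reduced bracket \eqref{EqDefReducedSchoutenBracketFreeLie} and $\eta$ for the homotopy built in \eqref{EqDefFreeAlgebraHomotopyGeneral}, \eqref{EqDefHomotopyEta}, this means that on $y_1\bullet y_2\bullet y_3\in\Sym^3(\mathfrak{H}[1])$, with $y_1,y_2,y_3$ outer derivations (hence of degree $-1$ after the shift, so that all Koszul signs in \eqref{EqDefWThree} are trivial),
\[
  d_3=-\,\pr_{\mathfrak{H}[1]}\circ\overline{D_2}\circ\eta\circ\overline{D_2}\circ e^{*\varphi_1}\big|_{\Sym^3(\mathfrak{H}[1])}.
\]
Throughout I denote a chosen representative of a class in $\mathcal{H}^{0}\subset\Hom(V,\Tens V)$ by the same symbol as the class, and abbreviate $q_{ij}:=Q([y_i,y_j]_D)\in\Tens V^+$.

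First I would trace the four maps in turn. The injection gives $e^{*\varphi_1}(y_1\bullet y_2\bullet y_3)=y_1[1]\bullet y_2[1]\bullet y_3[1]$, and the first $\overline{D_2}$ produces the three terms $\pm\,[y_i,y_j]_D[1]\bullet y_k[1]$. The decisive step is the action of $\eta=\overline{h[1]}\circ\beta$: decomposing $[y_i,y_j]_D=P[y_i,y_j]_D+b'(q_{ij})$ (so that $b'Q=\mathrm{id}-P$ by the very definition of $Q$), the projection $\beta$ of \eqref{EqDefBeta} annihilates the summand whose two tensor factors both lie in the kernel $V_U=\korps\un\oplus\mathcal{H}^0$ of the homotopy idempotent, and keeps only $b'(q_{ij})[1]\bullet y_k[1]$, which carries one acyclic factor; then $\overline{h[1]}$, using $h(0,b'(q_{ij}))=(q_{ij},0)$ and $h|_{\mathcal{H}^0}=0$, leaves $q_{ij}[1]\bullet y_k[1]$. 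The second $\overline{D_2}$ evaluates the reduced bracket $[(q_{ij},0),(0,y_k)]_{\text{red}}=(-\overline{y_k}(q_{ij}),0)$, and $\pr_{\mathfrak{H}[1]}$ retains only the $\korps\un$-component. Collecting the three contributions with their signs gives, under $\korps\un\cong\korps$,
\[
  \sigma(y_1,y_2,y_3)=-\pr_{\korps\un}\overline{y_1}(q_{23})+\pr_{\korps\un}\overline{y_2}(q_{13})-\pr_{\korps\un}\overline{y_3}(q_{12}).
\]

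Next I would read off the tensor grading. Since the bracket, $b'$ and $Q$ all preserve tensor degree while $\overline{y_k}$ raises it by the tensor degree of $y_k$, the element $\overline{y_k}(q_{ij})$ has tensor degree equal to the sum of the three tensor degrees of $y_1,y_2,y_3$; its $\korps\un$-part is nonzero only when that sum is $0$, which shows that $\sigma$ is of tensor degree zero. As all tensor degrees are $\geqslant-1$, the only triples summing to $0$ are, up to permutation, $(0,0,0)$, $(-1,0,1)$ and $(-1,-1,2)$, so all other components vanish. I then evaluate the three surviving cases using $Q_{-1}=Q_0=0$ (hence $q_{ij}=0$ whenever $[y_i,y_j]_D$ has tensor degree $\leqslant0$) together with $\overline{\alpha}(v)=\alpha(v)\,\un$ for $\alpha\in V^*$, $v\in V$: for $(A,B,C)$ every $q$ vanishes, yielding \eqref{EqCompSigma000}; for $(\alpha,B,\rho)$ only $q_{B\rho}=Q_1([B,\rho]_D)$ survives, yielding \eqref{EqCompSigma-101}; and for $(\alpha,\beta,\psi)$ only $q_{\alpha\psi}$ and $q_{\beta\psi}$ survive, yielding \eqref{EqCompSigma-1-12}.

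The main obstacle is the bookkeeping of the two distinct projections: the idempotent $P$ hidden inside $\beta$, which isolates the coboundary part $b'(q_{ij})$ of the intermediate bracket, and the final projection onto $\korps\un$, which discards $\Tens V^+$. This is precisely the point that a naive computation through the characteristic cocycle $w_3$ of \eqref{EqDefWThree} obscures: there the analogous terms seem to cancel by the Jacobi identity for $[~,~]_D$, and one must observe that $Qb'$ equals the projection onto $\Tens V^+$ rather than the identity—because $b'$ annihilates $\korps\un$—so that exactly the $\korps\un$-component of $\overline{y_k}(q_{ij})$ survives. The perturbation-lemma route above sidesteps this cancellation altogether; verifying the signs emitted by $\overline{D_2}$ and by $\beta$ is the only remaining routine matter.
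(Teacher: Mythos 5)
Your proposal is correct and is essentially the paper's own argument: unwinding the order-$r=1$ term of the homotopy-transfer geometric series on the reduced complex $\mathfrak{G}_{\text{red}}$ with homotopy $Q$ is exactly what the paper does by inserting $\phi_2=-h\big([\phi_1(\cdot),\phi_1(\cdot)]\big)$ from eqn (\ref{EqCompPhiTwoPertShifted}) into $w_3$, and both routes land on the same cyclic formula $\sigma(\psi_1,\psi_2,\psi_3)=-\epsilon\big(\overline{\psi_1}(Q([\psi_2,\psi_3]_D))+\mathrm{cycl.}\big)$ followed by the identical tensor-degree bookkeeping with $Q_{-1}=Q_0=0$. (The extra terms $Q\big([[\psi_i,\psi_j]_H,\psi_k]\big)$ that you worry the $w_3$ route "obscures" are dispatched in the paper in one line, since $\epsilon\circ Q=0$ and the double bracket has tensor degree $0$, so there is no genuine advantage either way.)
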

\begin{proof}
	According to formula (\ref{EqCompPhiTwoPertShifted})
	and formula $w_3$ (see eqn (\ref{EqDefWThree}))
	we see that $\sigma$ is of tensor degree $0$,
	hence for any homogeneous $\psi_1,\psi_2,\psi_3\in
	\mathfrak{outder}$ it follows that
	 $\sigma(\psi_1,\psi_2,\psi_3)$ is of tensor degree zero
	 (since $\korps\un\subset \Tens V =\mathfrak{H}^{-1}$ is of 
	 tensor degree zero), hence $|\psi_1|+|\psi_2|+|\psi_3|=0$
	 leaving for the only possible non-zero components
	 three possibilities $|\psi_1|=0$, $|\psi_2|=0$,
	 $|\psi_3|=0$; then $|\psi_1|=-1$, $|\psi_2|=0$,
	 $|\psi_3|=1$; and finally $|\psi_1|=-1$, $|\psi_2|=-1$,
	 $|\psi_3|=2$. Identifying $\mathfrak{outder}$ with the
	 complement $\mathcal{H}^0$ in $\mathfrak{der}$
	 and the cohomological Lie bracket $[~,~]_H$ in
	 $\mathfrak{outder}$ with the projection
	 $(\mathrm{id}-Q)([~,~]_D)$
	 we get
	\begin{eqnarray}
       \lefteqn{\sigma(\psi_1,\psi_2,\psi_3) }\nonumber \\
       & = & -\epsilon\Big(\big[\psi_1,
         Q[\psi_2,\psi_3]_D
       \big]_{\mathfrak{G}_{red}}
       	-Q\big(\big[[\psi_1,\psi_2]_H,
       	\psi_3\big]_{\mathfrak{G}_{red}}\big)
       	+ \mathrm{cycl.}\Big) \nonumber\\
       & = & -\epsilon\Big(
       \overline{\psi_1}\big(
       Q([\psi_2,\psi_3]_D)\big)+ \mathrm{cycl.}\Big) 
             \label{EqCompSigmaFreeGeneral}      
	\end{eqnarray}
	where the last three terms vanish thanks to the fact that
	the double bracket is of tensor degree $0$, whence its
	result is of tensor degree $0$, and $Q_0=0$.\\
	Eqn (\ref{EqCompSigma000}) immediately follows since
	$Q_0=0$. Two of the three terms in (\ref{EqCompSigma-101})
	vanish since $Q_{-1}=0$ and $Q_0=0$, and $\overline{\alpha}$
	reduces to the application of $\alpha$ applied to the
	vector which is the value of $Q_1$. In eqn
	(\ref{EqCompSigma-1-12}) the term with the bracket
	$[\alpha,\beta]_D=0$ vanishes leaving the other two.
\end{proof}

We shall now construct a more explicit complement
$\mathcal{H}$ to the coboundaries in $\mathfrak{der}$
in the case where \textbf{$V$ is of finite dimension 
	$\mathbf{N}$}:
Let $e_1,\ldots,e_N$ be a base and $\epsilon^1,\ldots,\epsilon^N$ be the corresponding dual base.
For $n \in \nat$ we can write the applications $\psi\in \Hom(V,V^{\otimes n+1})$ as
\[
\psi = \sum_{j, i_0, \dotsc , i_n=1}^N \psi_j^{i_0 \dotso i_n} e_{i_0} \otimes \dotsb \otimes e_{i_n} \otimes \epsilon^j.
\]
where $\psi_j^{i_0 \dotso i_n}\in \korps$ are the components
of $\psi$ w.r.t.~the base. We have canonically identified
$\Hom(V,V^{\otimes n+1})$ with $V^{\otimes n+1}\otimes V^*$.
For each integer $n \geq -1$, we consider the following 
linear map
$S_n :  \Hom(V,V^{\otimes n+1})  \to {\Tens}^n V$ defined
by $ S_{-1}  =   0$, $S_0    =   0$, and for all $n\geq 1$,
$v_0,v_1,\ldots,v_n\in V$ and $\alpha\in V^*$ we set
$S_n(v_0\otimes\cdots\otimes v_n\otimes \alpha)$ equal to
$\alpha(v_0)v_1\otimes\cdots\otimes v_n$ which reads
in components
\begin{equation}
  S_n(\psi)  = 
  \sum_{j, i_1, \dotsc , i_n=1}^N \psi_j^{j i_1 \dotso i_n} e_{i_1} \otimes \dotsb \otimes e_{i_n}~~~\mathrm{if~}n\geq 1,
\end{equation}
and can be viewed
as a kind of \emph{`first factor trace'} for $n\geq 1$. Note that each $S_n$ is invariant under the general linear group of $V$.
We write
$S : \Hom(V,\Tens V) \to \Tens V$ for the sum $S \coloneqq \sum_{n \geqslant -1} S_n$ whence $S$ is homogenous of degree
$0$ w.r.t~the tensor grading. For each integer $n\geq 1$
denote by $\zeta_n:V^{\otimes n}\to V^{\otimes n}$ the linear map defined by the cyclic permutation where 
$v_1\otimes\cdots \otimes v_n$
is sent to $v_2\otimes\cdots\otimes v_n\otimes v_1$
for all $v_1,\ldots,v_n\in V$. Observing
that for each $a\in {\Tens}^nV$, $n\geq 1$, 
the inner derivation $b'(a)$ has components $(\mathrm{ad}_a)_j^{i_0i_1\cdots i_n}$ given by
$a^{i_0i_1\cdots i_{n-1}}\delta^{i_n}_j-
\delta^{i_0}_ja^{i_1\cdots i_n}$ we get
\begin{equation}
     S_n\big(b'(a)\big)=\zeta_n(a)-Na.
\end{equation}
Since obviously $\zeta_n^{\circ n}=\mathrm{id}_{{\Tens}^nV}$
it follows that $\zeta_n$ is diagonalizable, and 
the eigenvalues of $\zeta_n$ are in the set of all $n$th roots
of unity, whence $\zeta_n-N\mathrm{id}_{{\Tens}^nV}$ is invertible since $N\geq 2$. This shows that for each integer $n\geq 1$ the map $S_n$ is surjective, and the intersection of $b'({\Tens}^nV)$
with the kernel $\mathcal{H}^{0(n)}$ of $S_n$ is equal to $\{0\}$.
By elementary finite-dimensional linear algebra we conclude
that $\mathcal{H}= \bigoplus_{n \geqslant -1} \Kr S_n$ is a graded complement of $b'\Tens V^+$, and thus defines a section
$\mathfrak{outder}\to \mathcal{H}\subset \mathfrak{der}$.
Recall that $\mathcal{H}^{0{(-1)}}=V^*=\mathfrak{der}^{(-1)}$
and $\mathcal{H}^{0(0)}=\Hom(V,V)=\mathfrak{der}^{(0)}$. By inverting
$\zeta_n-\mathrm{id}_{{\Tens}^nV}$ we get the map 
$Q=\sum_{n\geq -1}Q_n:\mathfrak{der}\to \Tens V$ by setting
$Q_{-1}=0$, $Q_0=0$, and for each integer $n\geq 1$
\begin{equation}\label{EqDefQNFinDim}
    Q_n= -\frac{1}{N^n-1}\sum_{r=0}^{n-1}
          N^{n-r-1}\zeta_n^{\circ r}\circ S_n.
\end{equation}
\begin{prop}
	In the notation of Proposition
	\ref{PSigmaGeneralComplement} we have
	\begin{equation}\label{EqCompSigmaSpecial000And-101}
	 \sigma(A,B,C)=0,~~\sigma(\alpha,B,\rho)=0,~~~\mathrm{and}
	\end{equation}
	\begin{equation}\label{EqCompSigmaSpecial-1-12}
     \sigma(\alpha,\beta,\psi) =
     \frac{1}{N-1} \sum_{k,j,l} \alpha_l \beta_k 
       (\psi_j^{l j k} - \psi_j^{k j l}),
	\end{equation}
	and there are $\alpha,\beta,\psi$ such that
	$\sigma(\alpha,\beta,\psi)\neq 0$.
\end{prop}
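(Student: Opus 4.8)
The plan is to feed the formulas of Proposition~\ref{PSigmaGeneralComplement} the explicit shape of $Q_1$. Specialising \eqref{EqDefQNFinDim} to $n=1$ (where $\zeta_1=\mathrm{id}_V$ and only the $r=0$ summand occurs) gives $Q_1=-\tfrac{1}{N-1}S_1$. The identity $\sigma(A,B,C)=0$ is then nothing but \eqref{EqCompSigma000}. For $\sigma(\alpha,B,\rho)=0$ I would start from \eqref{EqCompSigma-101} and evaluate $S_1\big([B,\rho]_D\big)$: since $S_1$ is $GL(V)$-equivariant and $[B,-]_D$ is exactly the infinitesimal $\mathfrak{gl}(V)$-action on $\mathfrak{der}$, a short computation gives $S_1\big([B,\rho]_D\big)=B\big(S_1\rho\big)$. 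But $\rho\in\mathfrak{outder}^{(1)}=\Kr S_1$, so $S_1\rho=0$, whence $Q_1\big([B,\rho]_D\big)=0$ and $\sigma(\alpha,B,\rho)=0$.

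For the main formula I would compute $[\beta,\psi]_D$ by hand. Because $\beta\in V^{*}$ takes values in $\korps\un$ and the derivation $\overline{\psi}$ annihilates $\un$, the term $\overline{\psi}\circ\beta$ vanishes and $[\beta,\psi]_D=\overline{\beta}\circ\psi$, with $\overline{\beta}$ the degree $-1$ contraction $\overline{\beta}(e_a\otimes e_b\otimes e_c)=\beta_a\,e_b\otimes e_c+\beta_b\,e_a\otimes e_c+\beta_c\,e_a\otimes e_b$. Applying $S_1$, i.e.\ contracting the first upper slot against the lower index, produces three terms; two of them reorganise into $\sum_b\beta_b\,(S_2\psi)^{bq}$ and $\sum_c\beta_c\,(S_2\psi)^{qc}$, which vanish since $\psi\in\mathfrak{outder}^{(2)}=\Kr S_2$. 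Only the contraction of $\beta$ against the first tensor factor of $\psi$ survives, giving $\big(S_1[\beta,\psi]_D\big)^q=\sum_{m,a}\beta_a\,\psi_m^{a m q}$. Substituting this and the analogous expression for $[\alpha,\psi]_D$ into \eqref{EqCompSigma-1-12}, inserting $Q_1=-\tfrac{1}{N-1}S_1$, and relabelling the summation indices yields \eqref{EqCompSigmaSpecial-1-12}.

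For non-vanishing I would produce an explicit triple. Take $\alpha=\epsilon^1$ and $\beta=\epsilon^2$ (distinct, which is where $N\geqslant2$ enters), and let $\psi$ be supported on the two components $\psi_1^{112}=1$ and $\psi_2^{212}=-1$, all others zero. Then $S_2(\psi)^{12}=\psi_1^{112}+\psi_2^{212}=0$ and every remaining entry of $S_2\psi$ vanishes, so $\psi\in\Kr S_2=\mathcal{H}^{0(2)}$ genuinely represents a class in $\mathfrak{outder}^{(2)}$. Plugging into \eqref{EqCompSigmaSpecial-1-12}, only the term $l=1,\,k=2$ contributes and $\sum_j\big(\psi_j^{1j2}-\psi_j^{2j1}\big)=1$, so $\sigma(\alpha,\beta,\psi)=\tfrac{1}{N-1}\neq0$. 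The same two-covector pattern works verbatim for every $N\geqslant2$. By Theorem~\ref{TFreeSigma} this $\sigma$ is precisely $d_3[-1]$, so its non-vanishing forces $d_3\neq0$ and hence the absence of formality for $\dim V\geqslant2$.

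The main obstacle will be the index and sign bookkeeping in the middle step: one must keep straight the three roles played by the upper indices of $\psi$ under $\overline{\beta}$ followed by $S_1$, and verify that the two terms collapsing to $S_2\psi$ are precisely those annihilated by $\psi\in\Kr S_2$, leaving the antisymmetric combination $\psi_j^{ljk}-\psi_j^{kjl}$ with the correct overall sign and prefactor $\tfrac{1}{N-1}$. A secondary point to record is that the chosen complement satisfies $\mathcal{H}^{0(2)}=\Kr S_2$, which is exactly what legitimises the explicit $\psi$ as an admissible representative of an outer derivation class.
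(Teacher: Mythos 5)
Your proof is correct and follows essentially the same route as the paper's: the first two vanishing statements come from (\ref{EqCompSigma000}) and from the $GL(V)$-equivariance of $S_1$ combined with $\rho\in\Kr S_1$, and the main formula is obtained, exactly as in the paper's (omitted) ``straightforward computation'', by evaluating $Q_1=-\tfrac{1}{N-1}S_1$ on $[\beta,\psi]_D=\overline{\beta}\circ\psi$ and using $\psi\in\Kr S_2$ to kill two of the three contractions. Your non-vanishing witness ($\psi_1^{112}=1$, $\psi_2^{212}=-1$) differs from the paper's simpler choice $\psi=e_1\otimes e_2\otimes e_2\otimes\epsilon^2$, which lies in $\Kr S_2$ without needing a compensating term, but both are valid and yield $\sigma(\epsilon^1,\epsilon^2,\psi)=\tfrac{1}{N-1}\neq 0$.
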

\begin{proof}
 The first eqn in (\ref{EqCompSigmaSpecial000And-101})
 follows from (\ref{EqCompSigma000}), and the second one
 from eqn (\ref{EqCompSigma-101}), from the definition
 of $S_1$ (\ref{EqDefQNFinDim}) and from the fact that
 $S_1$ is invariant under the linear group whence
 $S_1([B,\rho]_D)=B\big(S_1(\rho)\big)=0$ since
 $\rho\in \mathcal{H}^{0(1)}=\mathrm{Ker}(S_1)$.
 The last eqn is a straight-forward computation using
 eqs (\ref{EqDefQNFinDim}) and (\ref{EqCompSigma-1-12}).
 Taking $\alpha=\epsilon^1$, $\beta=\epsilon^2$, and
 $\psi=e_1\otimes e_2\otimes e_2\otimes\epsilon^2$ we get
 $\sigma(\alpha,\beta,\psi)=1$.
\end{proof}

We are now ready to prove the following
\begin{theorem}
	Let $V$ be a vector space over $\korps$ whose dimension
	is $\geq 2$.
	\begin{enumerate}
		\item The Hochschild complex of the free algebra
		$\Tens V$ generated by $V$ is NOT formal in the
		$L_\infty$-sense.
		\item There is a $L_\infty$-structure on the
		Hochschild cohomology of $\Tens V$ whose Taylor coefficients $d_2$ and $d_3$ do not vanish, but
		$d_n=0$ for all $n\geq 4$, and there is a $L_\infty$-quis from the Hochschild cohomology
		(with respect to $d_2+d_3$) 
		to the Hochschild complex (with respect to the usual
		structure 
		$b[1]+[~,~]_G[1]$).
	\end{enumerate}
\end{theorem}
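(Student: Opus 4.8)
The plan is to combine the general $L_\infty$-structure already produced in \autoref{TFreeSigma} with the characteristic $3$-class obstruction of \autoref{PW3Z3C3}, so that the whole statement reduces to deciding whether the scalar $3$-cocycle $\sigma$ is a Chevalley--Eilenberg coboundary. First I would record that, since the higher brackets $d_n$ vanish for $n\geq 4$ by \autoref{TFreeSigma}, an $L_\infty$-quasi-isomorphism of order $3$ is automatically a full one; hence by \autoref{PW3Z3C3} formality holds if and only if the characteristic class $c_3$ vanishes. The next step is to identify $c_3$ with $[\sigma]$: because $\korps\un$ is central in $\mathfrak{H}=\korps\un\oplus\mathfrak{outder}$, a degree $-1$ primitive $\theta:\Lambda^2\mathfrak{H}\to\mathfrak{H}$ of $z_3$ is forced by degree reasons to take values in $\korps\un$ on two arguments of $\mathfrak{outder}$ and to vanish otherwise, and the adjoint terms $[\psi_i,\theta(\cdots)]_{\mathfrak{H}}$ appearing in $\delta_{\mathfrak{H}}\theta$ die by centrality. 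Thus the restriction of $\delta_{\mathfrak{H}}\theta$ to $\Lambda^3\mathfrak{outder}$ is exactly the ordinary trivial-coefficient differential $\delta_{\mathfrak{outder}}$ applied to $\theta$ read as a scalar $2$-cochain, so that non-formality becomes equivalent to $[\sigma]\neq 0$ in $\ChE[3](\mathfrak{outder},\korps)$.

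The heart of the argument is then to prove $[\sigma]\neq 0$ for $\dim V\geq 2$, and I expect this to be the main obstacle. I would first treat $V$ of finite dimension $N\geq 2$ with the explicit formula \eqref{EqCompSigmaSpecial-1-12}, arguing by contradiction in the spirit of \autoref{PQuadraticLieCharacteristicClass}: assume $\sigma=\delta_{\mathfrak{outder}}\theta$. Since $\sigma$ is homogeneous of tensor degree $0$ and $\mathrm{GL}(V)$-equivariant, and the relevant cochain spaces are finite-dimensional $\mathrm{GL}(V)$-modules in each tensor degree, averaging lets me take $\theta$ equivariant of tensor degree $0$; it then decomposes into a part $\theta^{(0,0)}$ on $\mathfrak{outder}^{(0)}=\mathfrak{gl}(V)$ and a part $\theta^{(1,-1)}$ pairing $\mathfrak{outder}^{(1)}$ with $\mathfrak{outder}^{(-1)}=V^*$. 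The vanishing relations $\sigma(A,B,C)=0$ and $\sigma(\alpha,B,\rho)=0$ of \eqref{EqCompSigmaSpecial000And-101} constrain these components, while the surviving relation on $(\alpha,\beta,\psi)$ forces $\theta^{(1,-1)}$ to reproduce the right-hand side of \eqref{EqCompSigmaSpecial-1-12}; the equivariant and combinatorial analysis should show that no such $\theta$ exists, this incompatibility playing the role that Cartan-$3$-regularity played in the quadratic case.

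For $V$ of arbitrary dimension $\geq 2$ I would reduce to the finite-dimensional situation: the witness $\sigma(\epsilon^1,\epsilon^2,e_1\otimes e_2\otimes e_2\otimes\epsilon^2)\neq 0$ only involves a two-dimensional subspace $W=\langle e_1,e_2\rangle$, and restricting cochains to the Lie subalgebra of $\mathfrak{outder}$ generated by the outer derivations carried by $W$ yields a restriction map in cohomology under which a nonzero class remains detected; checking that this restriction is compatible with $\sigma$ (via \autoref{PSigmaGeneralComplement}) is the technical point to be secured. Finally, part (2) is assembled from the pieces in hand: \autoref{TFreeSigma} supplies the $L_\infty$-structure with $d_n=0$ for $n\geq 4$ and the quasi-isomorphism $e^{*\varphi}$ with $\varphi_n=0$ for $n\geq 3$; the coefficient $d_2=[~,~]_H[1]$ is nonzero because $\mathfrak{outder}$ is nonabelian for $\dim V\geq 2$ (the generator-level derivations $e_1\mapsto e_1,e_2\mapsto 0$ and $e_1\mapsto 0,e_2\mapsto e_1$ bracket to a nonzero outer derivation of tensor degree $0$); and $d_3\neq 0$ since $\sigma=d_3[-1]\neq 0$, which is immediate from the same witness.
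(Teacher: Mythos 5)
Your overall strategy coincides with the paper's: reduce non-formality to the non-triviality of the scalar $3$-cocycle $\sigma=d_3[-1]$ on $\mathfrak{outder}$ (using that a degree $-1$ primitive $\theta$ is forced to be a scalar $2$-cochain on $\Lambda^2\mathfrak{outder}$ and that centrality of $\korps\un$ kills the adjoint terms in $\delta_\mathfrak{H}\theta$), settle the finite-dimensional case by analysing potential primitives tensor-degree by tensor-degree, reduce the general case to a finite-dimensional subspace, and assemble part (2) from \autoref{TFreeSigma}.

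However, the decisive step is missing. You write that ``the equivariant and combinatorial analysis should show that no such $\theta$ exists,'' but this is precisely where all the work lies and it is not a routine verification. The paper proves the sharper auxiliary statement that \emph{any} tensor-degree-$0$ cochain $\theta$ whose coboundary vanishes on the $(0,0,0)$ and $(-1,0,1)$ components automatically satisfies $\delta_\mathfrak{H}\theta=0$, and this requires three nontrivial inputs: the vanishing of $H^2\big(\mathfrak{gl}(N,\korps),\korps\big)$ (Whitehead plus Hochschild--Serre) to gauge away $\theta_{00}$; the Invariant Tensor Theorem to pin down the remaining $\Hom(V,V)$-invariant component $\theta'_{-11}$ as a multiple of $(\alpha,\rho)\mapsto\sum_{r,s}\alpha_r\rho^{rs}_s$, the other candidate invariant dying because $\rho\in\Kr S_1$; and the explicit formula (\ref{EqCompCohBracketAlphaPsiFreeAlg}) for $[\alpha,\psi]_H$, whose $\frac{1}{N-1}$-correction term coming from the homotopy $Q$ is exactly what makes $\delta_\mathfrak{H}\theta'(\alpha,\beta,\psi)=0$. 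Without these the contradiction is only asserted. (Note also that literal ``averaging'' over $\mathrm{GL}(V)$ is not available over a general field of characteristic $0$; the paper instead derives the invariance of $\theta'_{-11}$ from the equation $\sigma(\alpha,B,\rho)=0$ after first removing $\theta_{00}$.) A second, smaller gap is the infinite-dimensional reduction: ``the Lie subalgebra of $\mathfrak{outder}$ generated by the outer derivations carried by $W$'' is not obviously the right object, since a derivation of $\Tens W$ has no canonical extension to $\Tens V$; the paper must choose the extension $\tilde{\chi}(x)=\mathrm{ad}_{Q_W(\chi)}(x)$ on a complement of $W$ precisely so that inner derivations map to inner derivations, the chain homotopies are intertwined, and $\sigma_V\circ j^{\times 3}=\sigma_W$ holds on the nose. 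You correctly flag this as unsecured, but it does need securing.
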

\begin{proof}
We look first at the finite-dimensional case where we shall
show the following auxiliary statement:

\vspace{0.5cm}

\begin{minipage}{10cm}
Let $\theta:\Lambda^2 \mathfrak{outder}\to \korps$ be a
linear map of tensor degree $0$ such that the
$\mathfrak{H}$-$3$-coboundary
$\delta_\mathfrak{H}\theta$ satisfies both eqs of eqn
(\ref{EqCompSigmaSpecial000And-101}). Then
$\delta_\mathfrak{H}\theta=0$.
\end{minipage} \hfill $(*)$

\vspace{0.5cm}

\noindent In the finite-dimensional case this will imply that the $3$-cocycle $\sigma$ is nontrivial since it is non-zero
showing nonformality whereas the second statement will then 
follow
from Theorem \ref{TFreeSigma}.

In order to prove the above auxiliary statement $(*)$, we first observe that $\theta$ has (up to obvious permutations)
only two surviving components, 
$\theta_{00}:\Lambda^2\Hom(V,V)\to \korps$, and 
$\theta_{-11}:V^*\otimes \Hom(V,V^{\otimes 2})\to \korps$
thanks to the fact that we can assume that $\theta$
is of tensor degree zero in order to have that 
$\delta_\mathfrak{H}\theta$ is. \\
The first of the eqs of
(\ref{EqCompSigmaSpecial000And-101}) shows that
$(\delta_{\mathfrak{gl}}\theta_{00})(A,B,C)=0$ for three
linear maps $A,B,C:V\to V$ where $\mathfrak{gl}$ is short
for the Lie algebra $\mathfrak{gl}(N,\korps)$. It is well-known
that the second scalar cohomology group of the Lie
algebra $\mathfrak{gl}(N,\korps)$ vanishes (where Whitehead's
Lemma for $\mathfrak{sl}(n,\korps)$ and the classical
Hochschild-Serre spectral sequence argument are used).
Hence there is a linear form 
$f_0:\Hom(V,V)\to\korps$ such that 
$\theta_{00}=\delta_{\mathfrak{gl}}f_0$. Upon trivially extending $f_0$ 
to a linear form (also denoted by $f_0$) of tensor degree
$0$ to the Lie algebra $\mathfrak{outder}$
we see that for the modified $2$-form $\theta'=\theta-\delta_{\mathfrak{H}}f_0$ the component
$\theta'_{00}$ vanishes whereas the coboundary remains
the same,
$\delta_{\mathfrak{H}}\theta'=\delta_{\mathfrak{H}}\theta$.\\
Next, with these modifications we look at the second equation
in (\ref{EqCompSigmaSpecial000And-101}): using
$\theta'_{00}=0$ we quickly obtain that $\theta'_{-11}$ is
invariant under the Lie algebra $\Hom(V,V)$. Since both
$\mathcal{H}^{0(1)}=\mathrm{Ker}S_1$ and $b'V$ are 
$\Hom(V,V)$-invariant complementary subspaces of $\Hom(V,V^{\otimes 2})$ we can extend $\theta'_{-11}$
to a $\Hom(V,V)$-invariant linear map from
$V\otimes\Hom(V,V^{\otimes 2})$ to $\korps$, hence as a
$\Hom(V,V)$-invariant element of $V^{\otimes 2}\otimes
V^{*\otimes 2}$. Using the Invariant Tensor Theorem (see e.g.~the book \cite[Thm.20.4, p.214]{KMS93} for a good account) we can conclude that $\theta'_{-11}$ is of the following form with $\lambda,\mu\in\korps$
(for all $\alpha\in V^*$ and $\rho\in \mathrm{Ker}S_2\subset\Hom(V,V^{\otimes 2})$)
\begin{equation}
      \theta'(\alpha,\rho) =
      \lambda\sum_{r,s=1}^N\alpha_r\rho_s^{rs}
      +\mu\sum_{r,s=1}^N\alpha_r\rho_s^{sr} 
      =\lambda\sum_{r,s=1}^N\alpha_r\rho_s^{rs}
\end{equation}
since $S_1(\rho)=\sum_{r,s=1}^N\rho_s^{rs}e_r=0$. We need the computation of the Lie bracket
in cohomology of $[\alpha,\psi]_H$ where
$\alpha\in V^*$ and $\psi\in \Hom(V,V^{\otimes 3})$:
\begin{eqnarray}
 \lefteqn{{[\alpha,\psi]_H}^{i_0i_1}_j } \nonumber \\
 & = & \sum_{r=1}^N\alpha_r\left(\psi_j^{ri_0i_1}
     +\psi_j^{i_0ri_1}+\psi_j^{i_0i_1r}\right)
     +\frac{1}{N-1}\sum_{r,s=1}^N
      \alpha_r\left(\psi_s^{rsi_0}\delta^{i_1}_j
                    -\psi_s^{rsi_1}\delta^{i_0}_j\right).
                    \nonumber \\
         &  &   \label{EqCompCohBracketAlphaPsiFreeAlg}
\end{eqnarray}
It is then straight-forward to see that  $\delta_{\mathfrak{H}}\theta'(\alpha,\beta,\psi)=0$
for all $\alpha,\beta\in V^*$ and 
$\psi\in \Hom(V,V^{\otimes 3})$. Since $\theta'$ is of tensor degree $0$, so is $\delta_{\mathfrak{H}}\theta'$, and
therefore the only possibly nonzero components are of tensor
degrees
$(0,0,0)$, $(-1,0,1)$, and $(-1,-1,2)$ which are all zero
whence $\delta_{\mathfrak{H}}\theta'=0$. This proves the
auxiliary statement and the
Theorem in the finite-dimensional case.
	
Suppose now that $V$ is a vector space of arbitrary, not necessarily finite dimension greater than $2$. Choose a subspace $W \subset V$ of finite dimension $N \in \nat$,
$N\geq 2$, and a complementary subspace $X\subset V$, \ie
$V=W\oplus X$. The inclusion $W\subset V$ induces an
inclusion of associative algebras $\iota:\Tens W\to \Tens V$.
Upon using the above
map $S$ for the finite-dimensional vector space $W$, there
is a complement $\mathcal{H}_W^{0}=\mathrm{Ker}S\subset \Hom(W,\Tens W)$ to $b'_W(\Tens W)$ where we have written
$b'_W$ for the adjoint representation with respect to $W$.
Let $Q_W:\Hom(W,\Tens W)\to \Tens W$ the above map
(\ref{EqDefQNFinDim}). For any linear map $\chi$ in
$\Hom(W,\Tens W)$ we define an extension $\tilde{\chi}$
in $\Hom(V,\Tens V)$ by $\tilde{\chi}(w)=\iota\big(\chi(w)\big)$ for all
$w\in W$, and $\tilde{\chi}(x)=\mathrm{ad}_{Q_W(\chi)}(x)=
b'_V\big(\iota(Q_W(\chi))\big)$ for all $x\in X$. 
We clearly get for all $a\in\Tens W$
\[
     \widetilde{b'_W(a)}= b'_V\big(\iota(a)\big),
\]
and the extension $\chi\mapsto \tilde{\chi}$ is injective.
We shall write $i:\Tens W \oplus \Hom(W,\Tens W)\to
\Tens V \oplus \Hom(V,\Tens V)$ for the injective linear map
$(a,\chi)\mapsto \big(\iota(a),\tilde{\chi}\big)$.
It clearly is a morphism of complexes 
$\big(\mathfrak{G}_{red}(\Tens W),b'_W\big)
\to \big(\mathfrak{G}_{red}(\Tens V),b'_V\big)$.
Next, we get the decomposition 
$\Tens V=\iota(\Tens W)\oplus \mathcal{I}$ where
$\mathcal{I}$ is the two-sided ideal of $\Tens V$ generated
by $X$. Note that for any $c\in \Tens V$ the adjoint representation $b'_V(c)=\mathrm{ad}_c$ preserves the subalgebra
$\iota(\Tens W)$ iff $c\in\iota(\Tens W)$, and it follows
\[
    \widetilde{\Hom(W,\Tens W)}\cap b'_V(\Tens V)
        =b'_V(\iota (\Tens W)).        
\]
Hence the subspace $\tilde{\mathcal{H}_W^0}$ trivially
intersects
the inner derivations $b'_V(\Tens V)=
b_V'(\iota(\Tens W))\oplus b_V'(\mathcal{I})$, hence we can
choose a tensor graded complement $\mathcal{H}^0_V$ of
$b'_V(\Tens V)$ in $\Hom(V,\Tens V)$ such that
$\widetilde{\mathcal{H}^0_W}\subset \mathcal{H}^0_V$.
We denote the projection $\Hom(V,\Tens V)\to \Tens V^+$
by $Q_V$.
It follows that
\[
     Q_V(\tilde{\chi})= \iota\big(Q_W(\chi)\big)
\]
hence the linear map $i$ also intertwines the corresponding
chain homotopies which we shall call $h_W$ and $h_V$, respectively. The first consequence is that the linear map
$\chi\mapsto \tilde{\chi}$, which is not a morphism of Lie
algebras, but it is one up to $b_V$-coboundaries, descends to
a Lie algebra injection $j:\mathfrak{outder}(\Tens W)\to
\mathfrak{outder}(\Tens V)$ (corresponding to the injection
$\widetilde{\mathcal{H}^0_W}\subset \mathcal{H}^0_V$).\\
With all these preparations we see that the characteristic
graded $3$-cocycles $\sigma_V$ (associated to the Hochschild complex of $\Tens V$) and $\sigma_W$ (associated to the Hochschild complex of $\Tens W$) are related by the map $j$,
viz.~
\[
    \sigma_V\big(j(\psi_1),j(\psi_2),j(\psi_3)\big)
    =   \sigma_W(\psi_1,\psi_2,\psi_3)
\]
for all $\psi_1,\psi_2,\psi_3\in \mathcal{H}_W^0$. If
$\sigma_V$ was exact, then by restriction $\sigma_W$
would also be exact in contradiction to the finite-dimensional
case.

\noindent This proves the Theorem.

\end{proof}

\begin{appendix}
	
\section{Filtered vector spaces}
  \label{App: Filtered vector spaces}

Most of the following material can be found e.g. in 
\cite[Ch.III]{Bou72} and \cite{NV79}.
Let $M$ be a vector space. Recall that a family of subspaces
$\big(F_r(M)\big)_{r\in\mathbb{Z}}$ of $M$ is called an
(ascending) \emph{filtration} 
if $F_r(M)\subset F_{r+1}(M)$ for any
integer $r$, and the pair $\left(M,\big(F_r(M)\big)_{r\in\mathbb{Z}}\right)$ is called a
\emph{filtered vector space}. Recall that the filtration is called \emph{exhaustive} if $\bigcup_{r\in\mathbb{Z}}F_r(M)=M$, 
\emph{separated} if $\bigcap_{r\in\mathbb{Z}}F_r(M)=\{0\}$, and
\emph{discrete} if there is an integer $r_0$ such that
$F_r(M)=\{0\}$ for all integers $r \leqslant r_0$. Every vector space
$M$ can be equipped with the \emph{trivial discrete filtration}
defined by $F_r(M)=\{0\}$ for all integers $r \leqslant -1$, and
$F_r(M)=M$ for all $r \geqslant 0$.
Moreover an exhaustive and separated filtration is well-known
to always give rise to a (topological) metric where the distance of two
elements $x,y$ of $M$ is defined by $2$ to the power
of the minimum of all
those integers $r$ such that $x-y\in F_r(M)$ if $x\neq y$,
and $0$ iff $x=y$. Hence a filtered vector space whose
filtration is exhaustive and separated is 
called \emph{complete} if the corresponding metric space is complete in the sense that every Cauchy sequence converges.
In such a situation a series $\sum_{n\in\mathbb{N}}x_n$
converges iff $x_n\to 0$, see e.g.~\cite[p.453]{Jac89}. Note that every filtered vector space whose filtration is exhaustive and discrete is complete.
Next, recall that for two filtered
vector spaces $\left(M,\big(F_r(M)\big)_{r\in\mathbb{Z}}\right)$
and $\left(M',\big(F_r(M')\big)_{r\in\mathbb{Z}}\right)$
a linear map $f:M\to M'$ is called of \emph{filtration degree} $m$
iff $f\big(F_r(M)\big)\subset F_{r+m}(M')$ for all integers $r$.
It follows that the space $H=\Hom \mathrm{filt} (M,M')$ of all linear maps of filtration degree $0$ is filtered
by declaring $F_r(H)=H$ for all $r \geqslant 0$ and for all
$r \leqslant -1$ $F_r(H)$ is the subspace of all linear maps of filtration degree $r \leqslant -1$. Note that 
the filtered vector space $H=\Hom \mathrm{filt} (M,M')$
is automatically complete if $\left(M,\big(F_r(M)\big)_{r\in\mathbb{Z}}\right)$
is exhaustive and separated and $\left(M',\big(F_r(M')\big)_{r\in\mathbb{Z}}\right)$ is complete.
Finally note that the tensor product of two filtered vector
spaces $M,M'$ is also filtered by $F_r(M\otimes M')=
\sum_{s\in\mathbb{Z}}F_s(M)\otimes F_{r-s}(M')$ for all integers
$r$.

\section{Graded Coalgebras}
  \label{App: Graded Coalgebras}
  
A lot of the following material can be found e.g. in
\cite[App.B]{Qui69}.
Recall that a graded vector space $C$ equipped with 
$\korps$-linear maps $\Delta_C:C\to C\otimes C$, 
$\varepsilon_C:C\to\korps$ and an element $\un_C$ 
of degree $0$
is called a \emph{graded coassociative counital coaugmented coalgebra} if
$(\Delta_C\otimes \mathrm{id}_C)\circ \Delta_C=
(\mathrm{id}_C\otimes\Delta_C)\circ \Delta_C$,
$(\varepsilon_C\otimes \mathrm{id}_C)\circ \Delta_C
=\mathrm{id}_C=(\mathrm{id}_C\otimes\varepsilon_C)$, $\Delta_C(\un_C)=\un_C\otimes \un_C$ and $\varepsilon(\un_C)=1$. Recall \emph{Sweedler's notation}
$\Delta(c)=\sum_{(c)}c^{(1)}\otimes c^{(2)}$ which stands for
a nonunique finite sum with homogeneous elements
$c^{(1)}$ and $c^{(2)}$ in $C$. Every such coalgebra carries a
\emph{canonical filtration} $\big(F_r(C)\big)_{r\in_\mathbb{Z}}$ defined
by $F_r(C)=\{0\}$ for all integers $r \leqslant -1$,
$F_0(C)=\korps\un_C$, and recursively
$F_{r+1}(C)=\{c\in C~|~
\Delta_C(c)-c\otimes \un_C-\un_C\otimes c\in F_r(C)\otimes 
F_r(C)\}$.
The maps $\Delta_C$ and $\varepsilon_C$ are filtration preserving.
A graded coassociative counital coaugmented coalgebra is called
a \emph{connected coalgebra} if the canonical filtration
is exhaustive. Most  of the graded coalgebras we shall encounter in this paper are \emph{graded cocommutative},
\ie $\tau\circ \Delta_C=\Delta_C$. Recall that a 
\emph{morphism
of graded connected coalgebras} $\Phi:C\to C'$ is a 
$\korps$-linear map of degree $0$ satisfying
$(\Phi\otimes \Phi)\circ \Delta_C=\Delta_{C'}\circ \Phi$,
$\varepsilon_{C'}\circ \Phi=\varepsilon_C$, and $\Phi(\un_C)=
\un_{C'}$. They are automatically filtration preserving. Moreover, a $\korps$-linear homogeneous map $d:C\to C'$ is called a \emph{graded coderivation of graded counital
coalgebras along the morphism $\Phi:C\to C'$} iff
$\Delta_{C'}\circ d= (d\otimes \Phi+\Phi\otimes d)\circ 
\Delta_C$. It follows that $\varepsilon_{C'}\circ d=0$.
For any graded associative unital algebra $(A,\mu_A, \un_A)$
and any  graded counital coassociative coalgebra
$(C,\Delta_C,\varepsilon_C)$ the \emph{convolution multiplication} $\ast$ on $\Hom(C,A)$ defined by
$\phi\ast \psi=\mu_A\circ (\phi\otimes \psi)\circ \Delta_C$
is a graded associative multiplication on $\Hom(C,A)$
with unit element $\un_A\varepsilon_C$. The convolution turns out to be very useful to express combinatorial formulas
in the graded symmetric bialgebra.

\section{The Perturbation Lemma}
\label{App: The Perturbation Lemma}

This Appendix is based on work of \cite{Bou72}, \cite{Hueb10},
\cite{Hueb11}, \cite{M10}:
\begin{defi}
	A \emph{(homotopy) contraction} consists of two chain complexes $(U,b_U)$ and $(V,b_V)$ (the differentials having degree $+1$) together with chain maps $i : U \to V$, $p : V \to U$, \ie
	\begin{subequations} \label{eq_contraction}
		\begin{gather}
		b_V \circ i = i \circ b_U, \qquad b_U \circ p = p \circ b_V \label{contraction_eq0}\\
		\intertext{
			and a map $h : V \to V$ of degree $-1$ such that
		}
		p \circ i = id_U \label{contraction_eq1} \\
		id_V - i \circ p = b_V \circ h + h \circ b_V \label{contraction_eq2} \\
		h^2 = 0, \qquad h \circ i = 0, \qquad p \circ h = 0. \label{contraction_sideqs}
		\end{gather}
	\end{subequations}
	Then $p$ is a surjection called the \emph{projection}, $i$ is an injection called the \emph{inclusion} and $h$ is an \emph{homotopy} between $id_V$ and $i \circ p$. We sum up equations \eqref{eq_contraction} with the diagram
	
	\begin{equation}\label{DiagramHomotopyContraction}
	\begin{tikzpicture}[baseline=(current bounding box.183)]
	\matrix (m) [matrix of math nodes,nodes in empty cells,column sep=1em,text height=1.5ex,text depth=0.25ex]
	{(U,b_U) & & (V,b_V) & \\};
	\path[right hook->]
	([yshift=-5pt]m-1-1.north east) edge node [above] {$i$} ([yshift=-5pt]m-1-3.north west);
	\path[->>]
	([yshift=5pt]m-1-3.south west) edge node [below] {$p$} ([yshift=5pt]m-1-1.south east);
	\draw[->] (m-1-4.north) arc (120:-120:2ex);
	\draw (m-1-4) ++(2em,0em) node {$h$};
	\end{tikzpicture}.
	\end{equation}
\end{defi}

\begin{remark}\label{RemarksContraction} \hfill
	\begin{enumerate}
		\item Condition \eqref{contraction_eq2} implies that the cohomologies of $U$ and $V$ are isomorphic.
		In the important particular case where the differential
		$b_U$ vanishes $U$ is isomorphic to the cohomology of $V$.
		\item Denoting by $[f,g] \coloneqq f \circ g - (-1)^{|f| |g|} g \circ f$ the graded commutator of two maps, this equation \eqref{contraction_eq2} also rewrites $id_V - i \circ p = [b_V,h]$. Our sign conventions are such
		thath both $i\circ p$ and $P:=[b_V,h]$ are idempotent
		linear maps. Note that $V$ decomposes in the direct sum of two subcomplexes, the kernel $V_U$ of $P$ (isomorphic to $(U,b_U)$,
		 and the image $V_\mathrm{acyc}$ of $P$ which is acyclic.
		\item \label{RemForcingSideconditions}
		Equations \eqref{contraction_sideqs} are called \emph{side conditions}. In case there is a homotopy contraction only satisfying eqs \eqref{contraction_eq0},
		\eqref{contraction_eq1}, and \eqref{contraction_eq2},
		it is straight-forward to see that the 
		`polynomially' modified homotopy
		\[
		  h'=[b_V,h]\circ h\circ b_V\circ  h\circ b_V\circ  h\circ b_V\circ  h\circ  [b_V,h]
		\]
		will satisfy all equations of \eqref{eq_contraction}
		including the side conditions 
		\eqref{contraction_sideqs}. Having a homotopy satisfying
		the side conditions is equivalent to specifying
		a vector space complement to the coboundaries in
		$V_\mathrm{acyc}$: $h$ will be zero on $V_U$ and on that complement and equal to the inverse of
		the restriction of the differential to that complement.
	 \item\label{RemSubcomplexLeadsToContraction} Since we are working in vector spaces, there is
	  a well-known important converse statement: if, for the two above complexes, 
	  there is just an injective chain map $i:(U,b_U)\to 
	  (V,b_V)$ 
	  inducing an isomorphism in cohomology, then there is
	  a surjective chain map $p:U\leftarrow V$ and a homotopy
	  $h: V\to V$ satisfying the conditions for a homotopy
	  contraction \eqref{contraction_eq2}: indeed 
	  by some straight-forward linear algebra it can be seen that it suffices to take a vector space complement $W$ to the subspace $i(U)+b_V(V)$ of $V$, then $V_\mathrm{acyc}=W\oplus b_V(W)$
	  will define an acyclic subcomplex of $(V,b_V)$ complementary to $i(U)$, which serves as a kernel of
	  an obvious surjective chain map $p:U\leftarrow V$. The chain homotopy $h$ is constructed as in Remark
	  \ref{RemarksContraction}, \ref{RemForcingSideconditions}.	  
	\end{enumerate}
\end{remark}

\begin{defi}
	A \emph{perturbation} of the differential $b_V$ is a morphism $\delta_V : V \to V$ of degree $+1$ such that $(b_V+\delta_V)^2 = 0 \Leftrightarrow \delta_V^2 + [\delta_V,b_V] = 0$.
\end{defi}

\begin{lemma}[Perturbation Lemma] \label{Lem:Perturbation}
	Let be given a contraction \eqref{DiagramHomotopyContraction}
	such that both $U$ and $V$ carry exhaustive and separated
	filtrations with $V$ complete (see Appendix \ref{App: Filtered vector spaces} for definitions) and such that the linear
	maps $b_U,b_V,i,p$ and $h$ are of filtration degree $0$.
	Moreover, let $\delta_V:V\to V$ be a perturbation of $b_V$ and suppose that $\delta_V$ is of filtration degree $-1$. \\
	Then the linear maps $(id_V + h\circ \delta_V)$ and
	$(id_V + \delta_V\circ h)$ from $V$ to $V$ are invertible,
	and we define
	\begin{gather*}
	\begin{aligned}
	\tilde{\imath} &= (id_V + h\circ \delta_V)^{-1}\circ i 
	\\
	\tilde{p} &= p \circ (id_V + \delta_V \circ h)^{-1} 
	\end{aligned}
	\qquad
	\begin{aligned}
	\tilde{h} &= (id_V + h \circ \delta_V)^{-1}\circ h 
	\\
	\delta_U &= p \circ(id_V + \delta_V\circ h)^{-1}\circ \delta_V\circ i 
	.
	\end{aligned}
	\end{gather*}
	Then $\delta_U$ is a perturbation of $b_U$ of filtration degree $-1$, and the above maps define a new contraction
	\begin{equation*}
	\begin{tikzpicture}[baseline=(current bounding box.183)]
	\matrix (m) [matrix of math nodes,nodes in empty cells,column sep=1em,text height=1.5ex,text depth=0.25ex]
	{(U,b_U+\delta_U) & & (V,b_V+\delta_V) & \\};
	\path[right hook->]
	([yshift=-5pt]m-1-1.north east) edge node [above] {$\tilde{i}$} ([yshift=-5pt]m-1-3.north west);
	\path[->>]
	([yshift=5pt]m-1-3.south west) edge node [below] {$\tilde{p}$} ([yshift=5pt]m-1-1.south east);
	\draw[->] (m-1-4.north) arc (120:-120:2ex);
	\draw (m-1-4) ++(2em,0em) node {$\tilde{h}$};
	\end{tikzpicture}.
	\end{equation*}
\end{lemma}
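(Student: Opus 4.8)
The plan is to follow the standard Basic Perturbation Lemma argument, organized in three stages: establish invertibility of the two resolvents via a convergent geometric series in a complete filtered endomorphism algebra, record a pair of intertwining identities relating the two resolvents, and then verify the contraction axioms and the perturbation property by formal algebraic substitution, using the original contraction identities together with the perturbation equation.

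First I would prove invertibility. Since $h$ has filtration degree $0$ and $\delta_V$ has filtration degree $-1$, both $h\circ\delta_V$ and $\delta_V\circ h$ have filtration degree $-1$, hence lie in $F_{-1}$ of the filtered algebra $\Hom_{\mathrm{filt}}(V,V)$, which is complete because $V$ is complete with an exhaustive and separated filtration (see Appendix \ref{App: Filtered vector spaces}). As $(h\circ\delta_V)^{\circ n}$ has filtration degree $-n$, the geometric series $\sum_{n\geqslant 0}(-h\circ\delta_V)^{\circ n}$ and $\sum_{n\geqslant 0}(-\delta_V\circ h)^{\circ n}$ converge, and a telescoping computation shows they are the inverses of $A\coloneqq id_V + h\circ\delta_V$ and $B\coloneqq id_V + \delta_V\circ h$, respectively.

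Next I would record the two intertwining identities $A\circ h = h\circ B$ and $\delta_V\circ A = B\circ\delta_V$, both immediate from associativity of composition; inverting yields $A^{-1}\circ h = h\circ B^{-1}$ and $\delta_V\circ A^{-1} = B^{-1}\circ\delta_V$. These show in particular that $\tilde h = A^{-1}\circ h = h\circ B^{-1}$, and they allow $\delta_V$ and $h$ to be moved freely across the resolvents. The remaining work is bookkeeping with the original contraction identities $p\circ i = id_U$, $id_V - i\circ p = b_V\circ h + h\circ b_V$, $h\circ h = 0$, $h\circ i = 0$, $p\circ h = 0$, $b_V\circ i = i\circ b_U$, $b_U\circ p = p\circ b_V$, together with the perturbation relation $\delta_V^{\,2} + \delta_V\circ b_V + b_V\circ\delta_V = 0$. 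Writing each resolvent as its series, the side conditions $h\circ h=0$, $h\circ i=0$, $p\circ h=0$ make all cross terms collapse: this is what gives $\tilde p\circ\tilde\imath = id_U$ and the perturbed side conditions $\tilde h\circ\tilde h=0$, $\tilde h\circ\tilde\imath=0$, $\tilde p\circ\tilde h=0$. That $\delta_U$ has filtration degree $-1$ is clear since only the factor $\delta_V$ lowers degree, and one checks directly that $\tilde\imath$ and $\tilde p$ are chain maps for $b_U+\delta_U$ and $b_V+\delta_V$.

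The hard part will be the perturbed homotopy identity $id_V - \tilde\imath\circ\tilde p = (b_V+\delta_V)\circ\tilde h + \tilde h\circ(b_V+\delta_V)$ together with $(b_U+\delta_U)^2 = 0$. A low-order expansion already reveals the mechanism: the discrepancy at each order reduces, via $id_V-i\circ p = [b_V,h]$ and $h\circ h=0$, to an expression of the shape $h\circ(b_V\circ\delta_V+\delta_V\circ b_V)\circ h = -h\circ\delta_V^{\,2}\circ h$, which is exactly cancelled by the next term of the series once the perturbation relation $\delta_V^{\,2} = -[\delta_V,b_V]$ is invoked. Thus the full identity follows by combining the geometric-series form of $A^{-1}$ and $B^{-1}$ with the perturbation equation, and $(b_U+\delta_U)^2=0$ then drops out of the perturbed homotopy identity precisely as $b_U^2=0$ does in the unperturbed case. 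I expect no conceptual obstruction beyond careful control of signs and composition order; the resulting statement is the classical Basic Perturbation Lemma.
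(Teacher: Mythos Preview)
Your proposal is correct and follows the classical route; in fact it is considerably more detailed than the paper's own treatment, which merely notes that the inverse of $id_V+\chi$ for $\chi$ of filtration degree $-1$ is given by the convergent geometric series $\sum_{k\geqslant 0}(-\chi)^{\circ k}$ and then declares the remaining identities ``straight-forward'' with a reference to the literature. There is no methodological difference to report.
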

The inverse of $id_V+\chi$ where $\chi : V \to V$ is a $\korps$-linear map of filtration
degree $-1$ is defined by the geometric series
$\sum_{k=0}^\infty (-\chi)^{\circ k}$ which converges.
The verification of the above identities is straight-forward, see e.g.~\cite{Br65}.

\end{appendix}

\bibliographystyle{amsalpha}

\nocite{*}
\bibliography{biblio-formality}

\end{document}